\newcommand{\vs}[1]{\vskip #1mm}
\newcommand{\X}{\mathbb X}
\newcommand{\R}{\mathbb R}
 \newtheorem{thm}{Theorem}
 \newtheorem{cor}{Corollary}
 \newtheorem{lem}{Lemma}
 \newtheorem{prop}{Proposition}
 \newtheorem{ass}{Assumption}
\begin{document}
\thispagestyle{empty}

\begin{center}
{\bf\large Convergence rates of posterior distributions for observations without the iid structure          }

\end{center}

\vspace{4mm} \sloppy
\begin{center}
{\sc  Yang Xing}\footnote{E-mail address: yang.xing@sekon.slu.se} \\[8pt]
{\it Centre of Biostochastics\\
Swedish University of Agricultural Sciences\\
SE-901 83 Ume\aa, Sweden}\\[8pt]
\end{center}
\vspace{3mm}

\begin{center}
{\large \bf Abstract}

\vspace{4mm}
\begin{minipage}{12cm}
The classical condition on the existence of uniformly exponentially consistent tests for testing the true density against the complement of its arbitrary neighborhood has been widely adopted in study of asymptotics of Bayesian nonparametric procedures. Because we follow a Bayesian approach, it seems to be more natural to explore alternative and appropriate conditions which incorporate the prior distribution. In this paper we supply a new prior-dependent integration condition to establish general posterior convergence rate theorems for observations which may not be independent and identically distributed.
The posterior convergence rates for such observations  have recently studied by Ghosal and van der Vaart \cite{ghv1}. We moreover adopt the Hausdorff $\alpha$-entropy given by Xing and Ranneby \cite{xir1}\cite{xi1}, which is also prior-dependent and smaller than the widely used metric entropies. These lead to extensions of several existing theorems. In particular, we establish a posterior convergence rate theorem for general Markov processes and as its application we improve on the currently known posterior rate of convergence for a nonlinear autoregressive model.

\end{minipage}
\end{center}

\vspace{8mm}

\noindent {\bf Keywords}: Density function, Hausdorff entropy, Hellinger metric, infinite-dimensional model, Markov chain, posterior distribution, rate of convergence.

\vs5
\noindent {\bf AMS classification:} 62G20, 62G07, 62F15.

\newpage
\setcounter{page}{1} \setcounter{equation}{0}

\section{Introduction}
The aim of this article is to study the asymptotic behavior of posterior distributions based on observations which are not assumed to be independent and identically distributed. Suppose that $\big(\mathfrak{X}^{(n)},\mathcal{A}^{(n)},P_\theta^{(n)}:\,\theta\in \Theta\big)$, $n=1,2,\dots$, are statistical experiments with observations $X^{(n)}$, where the parameter set $\Theta$ does not depend on the index $n$, and suppose that the distributions $P_\theta^{(n)}$ for all $\theta\in \Theta$ admit densities $p^{(n)}_\theta$ relative to a $\sigma$-finite measure $\mu^{(n)}$ on  $\mathfrak{X}^{(n)}$. Denote by $\theta_0$ the true parameter generating the observations $X^{(n)}$. Assume that $P_{\theta}^\infty$ is the infinite product measure $P_\theta^{(1)}P_\theta^{(2)}\cdots P_\theta^{(n)}\cdots$
on the product space $\bigotimes_{n=1}^\infty \mathfrak{X}^{(n)}$. In the sense that each $B\subset \mathfrak{X}^{(n)}$ is identified with the subset $(\mathfrak{X}^{(1)},\mathfrak{X}^{(2)},\dots,\mathfrak{X}^{(n-1)},B,\mathfrak{X}^{(n+1)},\dots)$ of the product space, we have that $P_{\theta}^\infty=P_{\theta}^{(n)}$ holds on $\mathfrak{X}^{(n)}$ for all $n$. In other words, $P_{\theta}^\infty$ is the distribution of the sequence $(X_1,X_2,\dots)$ which makes the observations $X_n$ independent from $P_{\theta}^{(n)}$. Let $d_n$ be a semimetric on $\Theta$. Note that any semimetric $d_n(P_{\theta_1}^{(n)},P_{\theta_2}^{(n)})$ on the space of densities defined on $\mathfrak{X}^{(n)}$ induces naturally a semimetric
$d_n(\theta_1,\theta_2)=d_n(P_{\theta_1}^{(n)},P_{\theta_2}^{(n)})$ on $\Theta$ when the mapping $\theta\mapsto P_{\theta}^{(n)}$ is one-to-one which is assumed in the paper. Given a prior $\Pi_n$ on $\Theta$, the posterior distribution $\Pi_n\bigl( \cdot\,\big|\,X^{(n)}\bigr)$ is a random probability measure given by
$$\Pi_n\bigl( B\,\big|\,X^{(n)}\bigr) ={\int_B\, p_\theta^{(n)}(X^{(n)})\, \Pi_n(d\theta) \over \int_{\Theta}\, p_\theta^{(n)}(X^{(n)})\, \Pi_n(d\theta)}={\int_B\, R_\theta^{(n)}(X^{(n)})\, \Pi_n(d\theta) \over \int_{\Theta}\, R_\theta^{(n)}(X^{(n)})\, \Pi_n(d\theta)}$$
for each measurable subset $B$ in $\Theta$, where $R_\theta^{(n)}(X^{(n)})=p_\theta^{(n)}(X^{(n)})\big/p_{\theta_0}^{(n)}(X^{(n)})$ stands for the likelihood ratio.
Recall that the posterior distribution $\Pi_n(\,\cdot \,|X^{(n)})$ is said to be convergent almost surely at a rate at least $\varepsilon_n$ if there exists $r>0$ such that
$\Pi_n\bigl( \theta\in\Theta:\,d_n(\theta,\theta_0)\geq r\varepsilon_n\,\big|X^{(n)}\bigr)\longrightarrow 0$ almost surely  as $n\to\infty$. Similarly, $\Pi_n(\,\cdot \,|X^{(n)})$ is said to be convergent in probability at a rate at least $\varepsilon_n$ if for any sequence $r_n$ tending to infinity,
$\Pi_n\bigl( \theta\in\Theta:\,d_n(\theta,\theta_0)\geq r_n\varepsilon_n\,\big|X^{(n)}\bigr)\longrightarrow 0$ in probability as $n\to\infty$. Throughout this paper, almost sure convergence and convergence in probability are understood as to be defined with respect to  $P_{\theta_0}^\infty$.

Asymptotics of Bayesian nonparametric procedures has been the focus of a considerable amount of research during past three decades. Much works were concerned with the asymptotic behavior of posterior distributions for i.i.d. observations, see, for instance, Barron, Schervish and Wasserman \cite{bsw}, Ghosal, Ghosh and van der Vaart \cite{ghg}, Shen and Wasserman \cite{shw} and Walker, Lijoi and Prunster \cite{wlp}. Recently, Ghosal and van der Vaart \cite{ghv1} proved several types of posterior convergence rate theorems for non-i.i.d. observations.  Their results reply upon the existence of uniformly exponentially consistent tests, combined with the metric entropy condition and the prior concentration rate. Both the existence of uniformly exponentially consistent tests and the metric entropy condition depend on models, but not on priors. Since the posterior depends on the complexity of the model only through the prior, it is therefore of interest to explore
alternative conditions which incorporate priors. In this paper we use an integration condition together with the Hausdorff $\alpha$-entropy to study convergence rates of posteriors. The integration condition and the Hausdorff $\alpha$-entropy both are prior-dependent.
We show that the integration condition is weaker than the existence of uniformly exponentially consistent tests and holds automatically for an interesting class of metrics used to describe rates of convergence. The latter fact leads to an extension of the results for i.i.d. observations in Walker \cite{wal2}\cite{wal1} and  Xing \cite{xi1}, in which construction of such tests is not necessarily required in order to obtain posterior consistency. The integration condition is moreover useful in construction of priors, as shown when we prove that the convergence rates of the pseudoposteriors given by Walker and Hjort \cite{wah} do not depend on the metric entropy condition.  The Hausdorff $\alpha$-entropy condition was introduced in Xing and Ranneby \cite{xir1}\cite{xi1} and it is weaker than the metric entropy condition. By means of the integration condition and the Hausdorff $\alpha$-entropy, we establish general posterior convergence rate theorems both in the almost sure sense and in the in-probability sense. Particularly, we obtain convergence rate theorems of pseudoposteriors and posteriors for independent observations. We also prove a posterior convergence rate theorem for general Markov chains, which is an extension of a result for stationary $\alpha$-mixing Markov chains given by Ghosal and van der Vaart (\cite{ghv1}, Theorem 5). As applications we improve on the posterior rate of convergence for the nonlinear autoregressive model, see Section 7.4 of Ghosal and van der Vaart \cite{ghv1}. Many authors have studied Bayesian convergence rates for the Gaussian white noise model with a conjugate Gaussian prior (or, equivalently, one has independent normally distributed observations as $N(\theta_i,1/n),\, i=1,2,\dots$ and puts a Gaussian prior independently on $\theta_i,\ i=1,2,\dots n$), see for instance Ghosal and van der Vaart \cite{ghv1}, Scricciolo \cite{scr}, Shen and Wasserman \cite{shw} and Zhao \cite{zha}.
Now by our general posterior convergence rate theorem, we extend their results to multi-normally distributed observations which may not be independent.

The paper is organized as follows. In Section 2 we introduce a prior-dependent integration assumption
and present several different types of general posterior convergence rate theorems. Section 3 contains applications of our general results to independent observations and Markov chains. Section 4 contains concrete applications including nonlinear autoregression model, infinite-dimensional normal model and priors based on uniform distributions.
The technical proofs are collected in Appendix.

Throughout this paper the notation $a \lesssim b$  means $a\leq C b$ for some positive constant $C$ which is universal or fixed in the proof. Write $a \approx b$ if $a \lesssim b$ and $b \lesssim a$. Denote $Pf^\alpha=\int_{\X}f^\alpha dP$ which is the integral of the nonnegative function $f$ with power $\alpha$ relative to the measure $P$ on $\X$.

\section{General Convergence rate theorems}
In this section we introduce a new prior-dependent integration condition to study consistency of posterior distributions. The integration condition
is shown to be automatically fulfilled by a large number of metrics. Together with the Hausdorff $\alpha$-entropy, this integration condition
plays a central roll in our versions of general Bayesian convergence rate theorems.

Let us begin with the following assumption given by Ghosal and van der Vaart \cite{ghv1}, in which they instead equivalently used a constant multiple of the semimetric $e_n$.
\begin{ass}\label{ass:1} Let $K$ be a positive constant. Assume that $\{d_n\}$ and $\{e_n\}$ are two sequences of semimetrics on $\Theta$ such that for every $n$, $\varepsilon>0$ and $\theta_1\in \Theta$ with $d_n(\theta_1,\theta_0)>\varepsilon$, there exists a test $\phi_n$ satisfying
$$P_{\theta_0}^{(n)}\phi_n\leq e^{-Kn\varepsilon^2}\quad{\rm and}\quad \inf\limits_{\theta\in \Theta:\, e_n(\theta,\theta_1)<\varepsilon}P_{\theta}^{(n)}\phi_n\geq 1-e^{-Kn\varepsilon^2}.$$
\end{ass}
Based on Assumption \ref{ass:1}, Ghosal and van der Vaart \cite{ghv1} established a series of general Bayesian convergence rate theorems. Assumption \ref{ass:1} does not depend on the prior distribution. Note that the posterior depends on the complexity of the model only through the prior. As far as the Bayesian approach is concerned, it would be interesting to find some conditions incorporating the prior in study of asymptotic properties. In the following we give such a prior-dependent condition.

Recall that the Hausdorff $\alpha$-entropy $J(\delta,\Theta_1,\alpha,e_n)$ for $\Theta_1\subset \Theta$ is the logarithm of the minimal sum of $\alpha$-th power of prior masses of balls of $e_n$-radius $\leq \delta$ needed to cover $\Theta_1,$ see Xing \cite{xi2} and Xing and Ranneby \cite{xir1} for the details of the Hausdorff $\alpha$-entropy. For simplicity of notations, we define the Hausdorff $\alpha$-constant $C(\delta,\Theta_1,\alpha,e_n):=e^{J(\delta,\Theta_1,\alpha,e_n)}$ of any subset $\Theta_1$ of $\Theta$. Observe that $C(\delta,\Theta_1,\alpha,e_n)$ depends on the prior $\Pi_n$. It was proved in Xing and Ranneby \cite{xir1} that the inequality
$$\Pi_n({\Theta_1})^\alpha\leq C(\delta,\Theta_1,\alpha,e_n)\leq  \Pi_n({\Theta_1})^\alpha\,N(\delta,\Theta_1,e_n)^{1-\alpha}$$
holds for any $0\leq \alpha\leq 1$, where $N(\delta,\Theta_1,e_n)$ denotes the minimal number of balls of $e_n$-radius $\leq \delta$ needed to cover $\Theta_1\subset \Theta.$ Our prior-dependent integration condition is

\begin{ass}\label{ass:2}
Let $\{d_n\}$ and $\{e_n\}$ be two sequences of semimetrics on $\Theta$. For some $\alpha\in (0,1)$ there exist constants $K_1>0$, $K_2>0$ and $K_3\geq 0$ such that the inequality
$$P_{\theta_0}^{(n)}\,\Big(\int_{\theta\in \Theta_1:\,d_n(\theta,\theta_0)> \varepsilon}R_\theta^{(n)}(X^{(n)})\, \Pi_n(d\theta)\Big)^\alpha$$
$$\leq K_1\,e^{-K_2n \varepsilon^2}C(\varepsilon,\{\theta\in \Theta_1:\,d_n(\theta,\theta_0)> \varepsilon\},\alpha,e_n)^{K_3}$$
holds for any $\varepsilon>0$, $\Theta_1\subset\Theta$ and for all $n$ large enough.
\end{ass}

We usually take $K_3=1$ but here we let $K_3\geq 0$ in order to increase the scope of applicability. It was shown in Xing \cite{xi2} that Assumption \ref{ass:2} holds  when the observations are i.i.d. and $r\,e_n=d_n=d$ for some constant $r> 2$ and some metric $d$ which is dominated by the Hellinger distance.
The integral of Assumption \ref{ass:2} depends on the prior $\Pi_n$ and hence is trivially equal to zero when $\Pi_n$ puts zero mass outside of $\theta_0$. So Assumption \ref{ass:2} cannot generally imply Assumption \ref{ass:1}. In fact, Assumption \ref{ass:2} is weaker than Assumption \ref{ass:1} as shown in the following.

\begin{prop}\label{prop:1}
Assumption \ref{ass:1} implies Assumption \ref{ass:2} for all $0<\alpha< 1$, where one can choose $K_1=2$, $K_2=(1-\alpha)\,K\wedge \alpha\,K$ and ${K_3}=1$.
\end{prop}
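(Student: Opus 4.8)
The plan is to bound the integral $\int_{\Theta_1'} R_\theta^{(n)}\,\Pi_n(d\theta)$, where $\Theta_1'=\{\theta\in\Theta_1:\,d_n(\theta,\theta_0)>\varepsilon\}$, by cutting $\Theta_1'$ into small $e_n$-balls supplied by the definition of the Hausdorff $\alpha$-constant and testing on each ball with the tests provided by Assumption \ref{ass:1}. Concretely, pick a covering of $\Theta_1'$ by balls $B_j$ of $e_n$-radius at most $\varepsilon$ with centres $\theta_j$ such that $\sum_j \Pi_n(B_j)^\alpha$ is (up to a negligible factor) equal to $C(\varepsilon,\Theta_1',\alpha,e_n)$. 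Since each $\theta_j\in\Theta_1'$ satisfies $d_n(\theta_j,\theta_0)>\varepsilon$, Assumption \ref{ass:1} gives a test $\phi_n^{(j)}$ with $P_{\theta_0}^{(n)}\phi_n^{(j)}\le e^{-Kn\varepsilon^2}$ and $P_\theta^{(n)}\phi_n^{(j)}\ge 1-e^{-Kn\varepsilon^2}$ for all $\theta$ with $e_n(\theta,\theta_j)<\varepsilon$, i.e. for all $\theta\in B_j$. Set $\phi_n=\sup_j \phi_n^{(j)}$ (or $1\wedge\sum_j\phi_n^{(j)}$); this is not used to make $P_{\theta_0}^{(n)}\phi_n$ small — indeed with infinitely many balls we cannot — but it is used to split the expectation.

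Next I would split the target expectation using $1=\phi_n+(1-\phi_n)$ and the inequality $(a+b)^\alpha\le a^\alpha+b^\alpha$ for $\alpha\in(0,1)$, so that
$$P_{\theta_0}^{(n)}\Big(\int_{\Theta_1'}R_\theta^{(n)}\,\Pi_n(d\theta)\Big)^\alpha \le P_{\theta_0}^{(n)}\Big(\phi_n\int_{\Theta_1'}R_\theta^{(n)}\,\Pi_n(d\theta)\Big)^\alpha + P_{\theta_0}^{(n)}\Big((1-\phi_n)\int_{\Theta_1'}R_\theta^{(n)}\,\Pi_n(d\theta)\Big)^\alpha.$$
For the first term, bound $\phi_n\le 1$ inside and use $R_\theta^{(n)}\le$ its integral, then apply Jensen/concavity $P_{\theta_0}^{(n)}(Y^\alpha)\le (P_{\theta_0}^{(n)}Y)^\alpha$ together with $P_{\theta_0}^{(n)}(\phi_n R_\theta^{(n)})=P_\theta^{(n)}\phi_n$; bounding $\phi_n\le\sum_j\phi_n^{(j)}$ and integrating over each $B_j$ gives $\sum_j e^{-Kn\varepsilon^2}\Pi_n(B_j)$, whence, after taking the $\alpha$-th power and using subadditivity of $x\mapsto x^\alpha$, a bound of the form $e^{-\alpha Kn\varepsilon^2}\sum_j\Pi_n(B_j)^\alpha \lesssim e^{-\alpha Kn\varepsilon^2}C(\varepsilon,\Theta_1',\alpha,e_n)$. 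For the second term, drop the restriction $d_n(\theta,\theta_0)>\varepsilon$ but keep $\theta\in B_j$ for the relevant $j$'s; here $1-\phi_n\le 1-\phi_n^{(j)}$ on $B_j$, so $P_{\theta_0}^{(n)}\big((1-\phi_n)R_\theta^{(n)}\big)=P_\theta^{(n)}(1-\phi_n)\le P_\theta^{(n)}(1-\phi_n^{(j)})\le e^{-Kn\varepsilon^2}$ for $\theta\in B_j$; again integrating, summing over $j$, applying concavity in the form $P_{\theta_0}^{(n)}(Y^\alpha)\le(P_{\theta_0}^{(n)}Y)^\alpha$ and then $x\mapsto x^\alpha$ subadditivity yields a bound $e^{(1-\alpha)(-Kn\varepsilon^2)}\big(\sum_j\Pi_n(B_j)\big)^\alpha$; but $\big(\sum_j\Pi_n(B_j)\big)^\alpha\le\sum_j\Pi_n(B_j)^\alpha\lesssim C(\varepsilon,\Theta_1',\alpha,e_n)$, so this term is $\lesssim e^{-(1-\alpha)Kn\varepsilon^2}C(\varepsilon,\Theta_1',\alpha,e_n)$. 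Combining the two contributions and taking $K_2=(1-\alpha)K\wedge\alpha K$, $K_1=2$, $K_3=1$ finishes the argument.

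The main obstacle I anticipate is handling the second (type-II) term with possibly infinitely many covering balls: the naive bound would put $(1-\phi_n)\ge 0$ with no control, so one must be careful to exploit that on each ball $B_j$ we have $1-\phi_n\le 1-\phi_n^{(j)}$ pointwise, and that the family of balls covers $\Theta_1'$, so that the integral over $\Theta_1'$ is dominated by $\sum_j$ of integrals over $B_j$. Making this rigorous requires ordering the balls and using $\Pi_n$-monotone convergence, and it is exactly at this point that the choice of $\phi_n$ as a countable supremum (rather than a single test) and the subadditivity $(\sum x_j)^\alpha\le\sum x_j^\alpha$ do the essential work; the same subadditivity is what converts the plain prior masses $\Pi_n(B_j)$ appearing after testing into the $\alpha$-th powers $\Pi_n(B_j)^\alpha$ whose infimal sum is, by definition, $C(\varepsilon,\Theta_1',\alpha,e_n)$. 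A minor technical point is passing from "infimal sum" to an actual covering: one takes a near-optimal covering and lets the approximation error vanish, which only costs a constant absorbed into $K_1$.
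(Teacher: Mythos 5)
There is a genuine gap, and it is precisely in the step you flag as the ``main obstacle.''

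Your second term (the $(1-\phi_n)$ term) is handled correctly: Jensen and Fubini give $\big[\int_{\Theta_1'}P_\theta^{(n)}(1-\phi_n)\,\Pi_n(d\theta)\big]^\alpha$, and since $1-\phi_n\le 1-\phi_n^{(j)}$ pointwise for every $j$, picking the $j$ with $\theta\in B_j$ gives $P_\theta^{(n)}(1-\phi_n)\le e^{-Kn\varepsilon^2}$; with $\Pi_n(\Theta_1')^\alpha\le C(\varepsilon,\Theta_1',\alpha,e_n)$ you get the right order (the decay rate here is $e^{-\alpha Kn\varepsilon^2}$, not $e^{-(1-\alpha)Kn\varepsilon^2}$ as you wrote, but that is a harmless bookkeeping slip).

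The first term (the $\phi_n$ term) is the type-I error term and it must be controlled through the smallness of $P_{\theta_0}^{(n)}\phi_n$ under the \emph{true} measure. Your argument does not do this; it invokes the change-of-measure identity $P_{\theta_0}^{(n)}\big(\phi_n R_\theta^{(n)}\big)=P_\theta^{(n)}\phi_n$, but Assumption 1 makes $P_\theta^{(n)}\phi_n\ge 1-e^{-Kn\varepsilon^2}$ \emph{large}, so no exponential decay appears; the claimed bound $\sum_j e^{-Kn\varepsilon^2}\Pi_n(B_j)$ does not follow from the steps listed. Nor can you patch it with the union bound, as you rightly observe, because $P_{\theta_0}^{(n)}\phi_n\le N_n e^{-Kn\varepsilon^2}$ with $N_n$ possibly huge. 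This is a structural defect of forming a \emph{single} global test $\phi_n=\sup_j\phi_n^{(j)}$ and only afterwards decomposing: the global type-I error is unbounded. Two further ingredients you do not have are also needed: (i) subadditivity $(\sum_j x_j)^\alpha\le\sum_j x_j^\alpha$ must be applied to $\int_{\Theta_1'}=\sum_j\int_{B_j}$ \emph{before} any test enters, so that each piece $\int_{B_j}$ meets only its own $\phi_j$ and no union bound is required; and (ii) the split should be $1\le\phi_j^{1-\alpha}+(1-\phi_j)^{1-\alpha}$ rather than the $\phi_j^\alpha+(1-\phi_j)^\alpha$ that your $1=\phi+(1-\phi)$ plus $(\cdot)^\alpha$-subadditivity produces, because H\"older with conjugate exponents $1/(1-\alpha)$ and $1/\alpha$ then yields $(P_{\theta_0}^{(n)}\phi_j)^{1-\alpha}(P_{\theta_0}^{(n)}\int_{B_j}R_\theta)^\alpha\le e^{-(1-\alpha)Kn\varepsilon^2}\Pi_n(B_j)^\alpha$, which is exactly the piece-by-piece bound whose sum is controlled by the Hausdorff $\alpha$-constant. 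With $\phi_j^\alpha$ instead, H\"older produces $(P_{\theta_0}^{(n)}\phi_j^{\alpha/(1-\alpha)})^{1-\alpha}$, which reduces to a usable type-I bound only for $\alpha\ge 1/2$; for $\alpha<1/2$ a separate argument (as in the paper's case split for its $L_2$) is required. In short: abandon the single supremum test, sum over the covering balls first, use an individual test per ball, and take the $1-\alpha$ exponent on $\phi_j$.
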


We shall use the Hellinger distance $ H(f,g)= ||\sqrt{f}-\sqrt{g}||_2$ and its modification
$ H_*(f,g)=\big|\big|(\sqrt{f}-\sqrt{g})\big({2\over 3}\,\sqrt{f\over g}+{1\over 3}\big)^{1/2}||_2$, where $||h||_p=\big(\int_{\mathfrak{X}^{(n)}} |h|^p\,d\mu^{(n)}\big)^{1/p}$.  The inequalities ${1\over \sqrt{3}}\,H(f,g)\leq H_*(f,g)\leq \big|\big|{f/ g}\big|\big|_\infty^{1/4}\,H(f,g)$ hold for all densities $f$ and $g$, since $\big|\big|{f/ g}\big|\big|_\infty\geq 1$.
The quantity $H_*$ was used by  Xing \cite{xi1} in  computation of prior concentration rates. Denote
$$W_n(\theta_0,\varepsilon)=\big\{\,\theta\in \Theta:\, H_*(p_{\theta_0}^{(n)},p_{\theta}^{(n)}\,)\leq \sqrt{{2\over 3}(e^{{3\over 2}n\varepsilon^2}-1)}\,\big\}.$$
Note that $W_n(\theta_0,\varepsilon)$ contains the set
$\big\{\theta\in \Theta:\, H_*(p_{\theta_0}^{(n)},p_{\theta}^{(n)}\,)\leq \sqrt{n}\varepsilon\big\}$ because of $ n\varepsilon^2< {2\over 3}(e^{{3\over 2}n\varepsilon^2}-1)$. The following proposition shows that Assumption \ref{ass:2} holds automatically when $d_n=e_n=d_n^1$ for some metrics $d_n^1$ such that $d_n^1(\theta,\theta_1)^s$ is a convex function of $\theta$ and
$$d_n^1(\theta_1,\theta_2)^2\leq -{2\over n}\log \big(1-{H(p_{\theta_1}^{(n)},p_{\theta_2}^{(n)}\,)^2\over 2}\big)\eqno (1)$$
for all $n$ and $\theta_1,\ \theta_2\in \Theta$, where $s$ is a fixed positive constant. Throughout this paper we let $d_n^1$ stand for a metric with this property.

\begin{prop}\label{prop:2} Let $0<\delta<1/2$ and  $0<\alpha<1$. Then
the inequality
$$P_{\theta_0}^{(n)}\,\Big(\int_{\theta\in \Theta_1:\,d_n^1(\theta,\theta_0)> \varepsilon}R_\theta^{(n)}(X^{(n)})\, \Pi_n(d\theta)\Big)^\alpha$$
$$\leq 2\,e^{-{1\over 2}(1-\alpha)(1-2\delta)^2n \varepsilon^2}C(\delta\,\varepsilon,\{\theta\in \Theta_1:\,d_n^1(\theta,\theta_0)> \varepsilon\},\alpha,d_n^1)$$
holds for all $n$,  $\varepsilon>0$ and $\Theta_1\subset\Theta$.
\end{prop}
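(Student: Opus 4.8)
The plan is to bound the integral by splitting the alternative set into Hausdorff balls supplied by the definition of the $\alpha$-constant $C(\delta\varepsilon,\cdot,\alpha,d_n^1)$, handle each ball by a single test, and then exploit the convexity of $(d_n^1)^s$ together with the sub-additivity of $t\mapsto t^\alpha$. First I would write $A_n:=\{\theta\in\Theta_1:\,d_n^1(\theta,\theta_0)>\varepsilon\}$ and cover it by balls $B_1,B_2,\dots$ of $d_n^1$-radius $\le\delta\varepsilon$, chosen so that $\sum_j \Pi_n(B_j)^\alpha$ approximates $C(\delta\varepsilon,A_n,\alpha,d_n^1)$. Using $\big(\sum_j a_j\big)^\alpha\le \sum_j a_j^\alpha$ for $a_j\ge0$ and $0<\alpha<1$, and then Fubini/Jensen to pull the expectation $P_{\theta_0}^{(n)}$ inside,
$$P_{\theta_0}^{(n)}\Big(\int_{A_n}R_\theta^{(n)}\,\Pi_n(d\theta)\Big)^\alpha\le \sum_j P_{\theta_0}^{(n)}\Big(\int_{B_j\cap A_n}R_\theta^{(n)}\,\Pi_n(d\theta)\Big)^\alpha\le \sum_j \Pi_n(B_j)^\alpha\, P_{\theta_0}^{(n)}\Big(\frac{1}{\Pi_n(B_j)}\int_{B_j\cap A_n}R_\theta^{(n)}\,\Pi_n(d\theta)\Big)^\alpha,$$
so it suffices to bound, uniformly in $j$, the normalized integral — the expectation of the $\alpha$-th power of a $\Pi_n$-average of likelihood ratios over a small $d_n^1$-ball that lies entirely in the alternative region.

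The core of the argument is this per-ball estimate. Fix such a ball $B$ with center $\theta_1$, radius $\delta\varepsilon$, and $B\subset A_n$, so every $\theta\in B$ satisfies $d_n^1(\theta,\theta_1)\le\delta\varepsilon$ and $d_n^1(\theta,\theta_0)>\varepsilon$. Let $\bar p=\frac{1}{\Pi_n(B)}\int_B p_\theta^{(n)}\,\Pi_n(d\theta)$ be the $\Pi_n$-mixture density over $B$; then the normalized integral equals $\int \bar p\,/\,p_{\theta_0}^{(n)}\cdot p_{\theta_0}^{(n)}\,d\mu^{(n)}$-type object, i.e. $R$ evaluated at $\bar p$. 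Since $H^2$ is concave-like and, more to the point, $\log(1-H^2/2)$ is convex along mixtures in the appropriate sense — actually the cleanest route is to use $(d_n^1)^s$ convexity directly — I would argue that the mixture $\bar p$ still sits in the alternative: by convexity of $\theta\mapsto d_n^1(\theta,\theta_0)^s$, Jensen gives $d_n^1(\bar\theta,\theta_0)^s\le \frac{1}{\Pi_n(B)}\int_B d_n^1(\theta,\theta_0)^s\,\Pi_n(d\theta)$ — no, one needs the reverse. The right mechanism: treat $\bar p$ as a point in the model's convex hull and use that $P_{\theta_0}^{(n)}\sqrt{R}$ controls the Hellinger affinity; specifically $P_{\theta_0}^{(n)}(\int_B R_\theta\,\Pi_n(d\theta)/\Pi_n(B))^\alpha\le \big(P_{\theta_0}^{(n)}\sqrt{\,\cdot\,}\big)^{2\alpha}$ when $\alpha\le 1/2$, and in general by Jensen in $\alpha$ one reduces to estimating $\rho:=P_{\theta_0}^{(n)}\big(\int_B R_\theta\,\Pi_n(d\theta)/\Pi_n(B)\big)^{1/2}$. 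Then $\rho\le \frac{1}{\Pi_n(B)}\int_B (P_{\theta_0}^{(n)}R_\theta^{1/2})\,\Pi_n(d\theta)$ by concavity of the square root, and $P_{\theta_0}^{(n)}R_\theta^{1/2}=1-\tfrac12 H(p_{\theta_0}^{(n)},p_\theta^{(n)})^2$. By the triangle inequality in the metric $d_n^1$, $d_n^1(\theta,\theta_0)\ge d_n^1(\theta_0,\theta_1)-\delta\varepsilon$, but each $\theta\in B$ already has $d_n^1(\theta,\theta_0)>\varepsilon$ directly; combining with inequality (1), $-\tfrac{2}{n}\log(1-H^2/2)\ge d_n^1(\theta,\theta_0)^2>\varepsilon^2$, hence $1-\tfrac12 H^2\le e^{-n\varepsilon^2/2}$. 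To recover the factor $(1-2\delta)^2$, I instead lower-bound $d_n^1(\theta,\theta_0)$ for $\theta$ ranging over the ball whose center lies in $A_n$ (using the triangle inequality $d_n^1(\theta,\theta_0)\ge d_n^1(\theta_1,\theta_0)-d_n^1(\theta,\theta_1)> \varepsilon-\delta\varepsilon$, and since covering balls meeting $A_n$ have centers within $\delta\varepsilon$ of $A_n$, one more application gives $>\varepsilon-2\delta\varepsilon=(1-2\delta)\varepsilon$), so $1-\tfrac12 H(p_{\theta_0}^{(n)},p_\theta^{(n)})^2\le e^{-(1-2\delta)^2 n\varepsilon^2/2}$ for every $\theta$ in every relevant ball. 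Therefore $\rho\le e^{-(1-2\delta)^2 n\varepsilon^2/2}$ and the normalized per-ball expectation is at most $e^{-\alpha(1-2\delta)^2 n\varepsilon^2/2}$ (for $\alpha\le 1/2$; for $\alpha\in(1/2,1)$ use Jensen $P|Y|^\alpha\le (P|Y|^{1/2})^{2\alpha}$ fails, so instead use $(P Y^\alpha)\le (PY^{1/2})^{2\alpha}$ is false too — the correct uniform statement, and the one that yields the stated exponent $\tfrac12(1-\alpha)(1-2\delta)^2$, is to bound $P_{\theta_0}^{(n)} Y^\alpha\le (P_{\theta_0}^{(n)} Y)^{\alpha}$ trivially by Jensen since $Y=\int R\,d\Pi/\Pi(B)\ge 0$ and $t^\alpha$ concave, but $P_{\theta_0}^{(n)}R_\theta=1$, which loses everything; hence one must split $Y^\alpha=Y^{\alpha}\cdot 1$ and use $\alpha<1$: write $Y^\alpha\le Y^{1/2}\cdot (1+Y)^{\alpha-1/2}$ is awkward — the clean trick is Hölder: $P_{\theta_0}^{(n)}Y^\alpha=P_{\theta_0}^{(n)}\big(Y^\alpha\cdot 1\big)$, and since $\alpha<1$, $P_{\theta_0}^{(n)}Y^\alpha\le \big(P_{\theta_0}^{(n)}Y^{1/2}\big)^{2\alpha}$ would need $Y^\alpha\le (Y^{1/2})^{2\alpha}$, which is equality — wait, that IS equality, so $P_{\theta_0}^{(n)}Y^\alpha$ vs $(P_{\theta_0}^{(n)}Y^{1/2})^{2\alpha}$: by Jensen for the concave $t\mapsto t^{2\alpha}$ when $2\alpha\le1$, $P_{\theta_0}^{(n)}(Y^{1/2})^{2\alpha}\le(P_{\theta_0}^{(n)}Y^{1/2})^{2\alpha}$, giving exponent $\alpha(1-2\delta)^2/2$; for $2\alpha>1$ the inequality reverses, so for $\alpha\in(1/2,1)$ one uses instead $Y^\alpha\le Y$ pointwise only where $Y\ge1$, combined with a truncation, and the bookkeeping produces the factor $(1-\alpha)$ rather than $\alpha$).

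Collecting: summing the per-ball bounds over the cover and optimizing over covers yields
$$P_{\theta_0}^{(n)}\Big(\int_{A_n}R_\theta^{(n)}\,\Pi_n(d\theta)\Big)^\alpha\le 2\,e^{-\frac12(1-\alpha)(1-2\delta)^2 n\varepsilon^2}\,C(\delta\varepsilon,A_n,\alpha,d_n^1),$$
where the constant $2$ absorbs the infimum-vs-minimum slack in the definition of $C$ and the two regimes of $\alpha$. The main obstacle I anticipate is the correct handling of the $\alpha$-th moment of the mixture likelihood ratio so as to extract exactly the exponent $\tfrac12(1-\alpha)(1-2\delta)^2$ uniformly for all $\alpha\in(0,1)$: the naive square-root/concavity bound is clean only for $\alpha\le 1/2$, and for $\alpha\in(1/2,1)$ one needs the more delicate estimate (splitting the likelihood-ratio mass across the threshold, or using the elementary inequality $P Y^\alpha\le P Y^{1/2}$-type bound combined with $1-\tfrac12H^2\le e^{-(1-2\delta)^2 n\varepsilon^2/2}$ raised to the power $(1-\alpha)$) that produces the $(1-\alpha)$ factor rather than an $\alpha$ factor. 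Once that moment estimate is pinned down, the covering argument and the convexity of $(d_n^1)^s$ (which guarantees the mixture over a small $d_n^1$-ball stays in the alternative, and more simply that the radius-$\delta\varepsilon$ balls behave well under the triangle inequality) make the rest routine.
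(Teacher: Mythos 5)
You have the right scaffolding (cover by small $d_n^1$-balls, use $(x+y)^\alpha\le x^\alpha+y^\alpha$, factor out $\Pi_n(B_j)^\alpha$, bound each normalized per-ball integral), but the two steps you flag as "the main obstacle" are indeed unresolved, and one auxiliary step is actually wrong. First, the inequality $\rho\le \Pi_n(B)^{-1}\int_B P_{\theta_0}^{(n)}R_\theta^{1/2}\,\Pi_n(d\theta)$ "by concavity of the square root" points the wrong way: Jensen for the concave map $t\mapsto\sqrt t$ gives $\sqrt{\int R_\theta\,d\nu}\ge \int\sqrt{R_\theta}\,d\nu$, a lower bound on the affinity, which is useless here. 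The mixture density $I_j=\Pi_n(B_j)^{-1}\int_{B_j}p_\theta^{(n)}\,\Pi_n(d\theta)$ has to be treated as a single density, and its Hellinger affinity with $p_{\theta_0}^{(n)}$ cannot be controlled by averaging the pointwise affinities of the $p_\theta^{(n)}$. This is precisely why you cannot avoid the convexity hypothesis on $(d_n^1)^s$, and it is also where the factor $(1-2\delta)^2$ genuinely comes from — not from triangle-inequality bookkeeping on covering balls as you suggest.

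Second, you dismiss the convexity route prematurely ("no, one needs the reverse"). Convexity is applied to $\theta\mapsto d_n^1(\theta,\theta_j)^s$ with $\theta_j$ the \emph{ball center}, not $\theta_0$: Jensen then gives $d_n^1(I_j,\theta_j)\le 2\delta\varepsilon$ (the mixture does not drift outside the ball), and the triangle inequality $d_n^1(I_j,\theta_0)\ge d_n^1(\theta_j,\theta_0)-d_n^1(I_j,\theta_j)\ge(1-2\delta)\varepsilon$ delivers the needed lower bound. Third, the case $\alpha\in(1/2,1)$ is not resolved in your proposal — you explicitly note the naive Jensen bounds fail — whereas the paper's proof handles all $\alpha\in(0,1)$ at once by an iterated H\"older argument: choose an integer $m$ with $\alpha/(1-\alpha)\le 2^m<2\alpha/(1-\alpha)$ and apply H\"older $m$ times to pass from $P_{\theta_0}^{(n)}Y^\alpha$ to $\bigl(P_{\theta_0}^{(n)}Y^{1/2}\bigr)^{\alpha/2^{m-1}}$, where $Y=I_j/p_{\theta_0}^{(n)}$. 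Since $P_{\theta_0}^{(n)}Y^{1/2}=1-\tfrac12 H(I_j,p_{\theta_0}^{(n)})^2\le e^{-\frac n2 d_n^1(I_j,\theta_0)^2}\le e^{-\frac12(1-2\delta)^2 n\varepsilon^2}$ by inequality (1), and $\alpha/2^{m-1}>1-\alpha$, the stated exponent $\tfrac12(1-\alpha)(1-2\delta)^2$ follows. Without this reduction and the correct convexity argument, your per-ball estimate does not close.
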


Another advantage of adoption of Assumption \ref{ass:2} is that it enables us more easily to construct prior distributions $\Pi_n$ which may receive good posterior convergence rates. Here we present a result which implies that Assumption \ref{ass:2} with $K_3=0$ holds for data-dependent priors $\Pi_n(d\theta)\big/p_\theta^{(n)}(X^{(n)})^{1-\beta}$ for any given constant $0<\beta<1$. Data-dependent priors have been studied by
Wasserman \cite{wa1}, Walker and Hjort \cite{wah} and Xing and Ranneby \cite{xir2}.

\begin{prop} \label{prop:0} The inequality
$$P_{\theta_0}^{(n)}\,\Big(\int_{\theta\in \Theta_1:\,d_n^1(\theta,\theta_0)> \varepsilon}R_\theta^{(n)}(X^{(n)})^\beta\, \Pi_n(d\theta)\Big)^\alpha$$
$$\leq e^{-\bigl((1-\beta)\wedge \beta\bigr)\alpha n \varepsilon^2}\Pi_n(\theta\in \Theta_1:\,d_n^1(\theta,\theta_0)> \varepsilon)^\alpha$$
holds for all $n$, $0<\alpha<1$, $0<\beta<1$, $\varepsilon>0$ and $\Theta_1\subset\Theta$.
\end{prop}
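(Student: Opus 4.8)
The plan is to reduce the estimate to a pointwise (in $\theta$) bound on the expectation $P_{\theta_0}^{(n)}\bigl(R_\theta^{(n)}(X^{(n)})^{\alpha\beta}\bigr)$ and then integrate. First I would use the elementary inequality $\bigl(\int_{\Theta_1'} f(\theta)\,\Pi_n(d\theta)\bigr)^\alpha \le \Pi_n(\Theta_1')^{\alpha-1}\int_{\Theta_1'} f(\theta)^\alpha\,\Pi_n(d\theta)$, valid for $0<\alpha<1$ by Jensen's inequality applied to the concave function $t\mapsto t^\alpha$ with respect to the normalized measure $\Pi_n/\Pi_n(\Theta_1')$, where $\Theta_1' = \{\theta\in\Theta_1 : d_n^1(\theta,\theta_0)>\varepsilon\}$ and $f(\theta) = R_\theta^{(n)}(X^{(n)})^\beta$. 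Taking $P_{\theta_0}^{(n)}$-expectation and applying Fubini, the problem becomes bounding $P_{\theta_0}^{(n)}\bigl(R_\theta^{(n)}(X^{(n)})^{\alpha\beta}\bigr)$ uniformly over $\theta\in\Theta_1'$ by $e^{-((1-\beta)\wedge\beta)\,\alpha\,n\varepsilon^2}$.

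The core estimate is a standard likelihood-ratio affinity bound: for any exponent $\gamma\in(0,1)$ one has
$$P_{\theta_0}^{(n)}\bigl(R_\theta^{(n)}\bigr)^{\gamma} = \int \bigl(p_\theta^{(n)}\bigr)^{\gamma}\bigl(p_{\theta_0}^{(n)}\bigr)^{1-\gamma}\,d\mu^{(n)} \le \Bigl(\int \sqrt{p_\theta^{(n)}p_{\theta_0}^{(n)}}\,d\mu^{(n)}\Bigr)^{2(\gamma\wedge(1-\gamma))},$$
which follows because the Rényi-type integral $\int (p_\theta)^{\gamma}(p_{\theta_0})^{1-\gamma}$ is log-convex in $\gamma$ and equals $1$ at the endpoints $\gamma=0,1$, so it is dominated by its value at $\gamma=1/2$ raised to the appropriate power; equivalently one interpolates between $\gamma\in(0,1/2]$ (bound by the $\gamma=1/2$ affinity) and $\gamma\in[1/2,1)$. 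Here the relevant exponent is $\gamma=\alpha\beta$, but I will actually want to peel off only the $\beta$-power and keep $\alpha$ outside, so more precisely I apply the affinity bound with $\gamma=\beta$ to get $P_{\theta_0}^{(n)}(R_\theta^{(n)})^{\beta}\le \rho^{2((1-\beta)\wedge\beta)}$ where $\rho=\int\sqrt{p_\theta^{(n)}p_{\theta_0}^{(n)}}\,d\mu^{(n)} = 1 - H(p_\theta^{(n)},p_{\theta_0}^{(n)})^2/2$, and then convexity/Jensen in the integral over $\Theta_1'$ lets me bring out the extra $\alpha$. Combining with the definition of $W_n$-type neighborhoods is unnecessary; instead I use that $d_n^1(\theta,\theta_0)>\varepsilon$ together with the defining inequality (1), namely $d_n^1(\theta,\theta_0)^2 \le -\frac{2}{n}\log\bigl(1 - H(p_\theta^{(n)},p_{\theta_0}^{(n)})^2/2\bigr) = -\frac{2}{n}\log\rho$, which gives $\rho \le e^{-\frac{n}{2}d_n^1(\theta,\theta_0)^2} < e^{-\frac{n}{2}\varepsilon^2}$.

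Putting the pieces together: for $\theta\in\Theta_1'$,
$$P_{\theta_0}^{(n)}\bigl(R_\theta^{(n)}\bigr)^{\beta} \le \rho^{2((1-\beta)\wedge\beta)} \le e^{-((1-\beta)\wedge\beta)\,n\,d_n^1(\theta,\theta_0)^2} \le e^{-((1-\beta)\wedge\beta)\,n\varepsilon^2}.$$
Feeding this into the Jensen step gives
$$P_{\theta_0}^{(n)}\Bigl(\int_{\Theta_1'} R_\theta^{(n)\,\beta}\,\Pi_n(d\theta)\Bigr)^{\alpha} \le \Pi_n(\Theta_1')^{\alpha-1}\int_{\Theta_1'} P_{\theta_0}^{(n)}\bigl(R_\theta^{(n)\,\beta}\bigr)^{\alpha}\,\Pi_n(d\theta) \le \Pi_n(\Theta_1')^{\alpha-1}\cdot e^{-((1-\beta)\wedge\beta)\,\alpha\,n\varepsilon^2}\,\Pi_n(\Theta_1'),$$
which is exactly the claimed bound $e^{-((1-\beta)\wedge\beta)\,\alpha\,n\varepsilon^2}\,\Pi_n(\Theta_1')^{\alpha}$. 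I expect the only genuinely delicate point to be the bookkeeping of exponents in the affinity/Rényi interpolation — making sure the power $2((1-\beta)\wedge\beta)$ (rather than, say, $2\beta(1-\beta)$ or something off by a factor) is the sharp constant coming out of log-convexity — and confirming that Jensen is being applied in the right direction on the concave map $t\mapsto t^\alpha$ after normalizing $\Pi_n$; the inequality (1) and Fubini are then routine. Note that the convexity hypothesis on $d_n^1(\cdot,\theta_1)^s$ stated before the proposition is not actually needed here, since no entropy/covering argument enters — the statement is purely a moment bound — so I would remark that Proposition \ref{prop:0} holds for any metric $d_n^1$ satisfying (1).
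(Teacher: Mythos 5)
Your key ingredient --- the R\'enyi-type interpolation $P_{\theta_0}^{(n)}\big(R_\theta^{(n)}\big)^{\gamma}\le\big(\int\sqrt{p_\theta^{(n)}p_{\theta_0}^{(n)}}\,d\mu^{(n)}\big)^{2(\gamma\wedge(1-\gamma))}$ from log-convexity of $\gamma\mapsto\int(p_\theta)^\gamma(p_{\theta_0})^{1-\gamma}$ --- is correct and gives a genuinely different (and arguably cleaner) route than the paper, which instead handles $\beta\le1/2$ and $\beta>1/2$ in two separate H\"older-inequality cases. Your remark that the convexity hypothesis on $d_n^1(\cdot,\theta_1)^s$ is not used here is also correct: only inequality (1) enters.

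However, your ``Jensen step'' is stated in the wrong direction, and this is a real gap as written. For the concave map $t\mapsto t^\alpha$ ($0<\alpha<1$) and the normalized measure $\Pi_n/\Pi_n(\Theta_1')$, Jensen gives
\[
\Bigl(\int_{\Theta_1'} f\,\Pi_n(d\theta)\Bigr)^\alpha \ \ge\ \Pi_n(\Theta_1')^{\alpha-1}\int_{\Theta_1'} f^\alpha\,\Pi_n(d\theta),
\]
not $\le$. So you cannot pass $\alpha$ inside the $\Pi_n$-integral in the direction your estimate requires; as stated, the chain of inequalities that ends in $\Pi_n(\Theta_1')^{\alpha-1}\cdot e^{-((1-\beta)\wedge\beta)\alpha n\varepsilon^2}\,\Pi_n(\Theta_1')$ is not justified. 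The good news is that the fix is trivial and actually shortens the proof: do not normalize $\Pi_n$ at all. Apply Jensen once, directly to the outer $P_{\theta_0}^{(n)}$-expectation (where $t\mapsto t^\alpha$ concave gives the correct direction), then Fubini, then your pointwise affinity bound:
\[
P_{\theta_0}^{(n)}\Bigl(\int_{\Theta_1'}R_\theta^{(n)\,\beta}\,\Pi_n(d\theta)\Bigr)^{\alpha}
\ \le\ \Bigl(\int_{\Theta_1'}P_{\theta_0}^{(n)}R_\theta^{(n)\,\beta}\,\Pi_n(d\theta)\Bigr)^{\alpha}
\ \le\ \Bigl(e^{-((1-\beta)\wedge\beta)\,n\varepsilon^2}\,\Pi_n(\Theta_1')\Bigr)^{\alpha},
\]
which is exactly the claim. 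With that one correction, your argument stands; it replaces the paper's two-case $\beta$-splitting with a single uniform interpolation estimate, which is the more transparent presentation.
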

Now we are ready to represent our first main result of this paper.
\begin{thm}\label{thm:1}
Suppose that Assumption \ref{ass:2} holds and that $\varepsilon_n>0$, $n\,\varepsilon_n^2\geq c_0\,\log n$ for all large $n$ and some fixed constant $c_0>0$. Suppose that there exist a constant  $c_1< K_2$ and a sequence of subsets $\Theta_n$ on $\Theta$ such that
$$C(j\varepsilon_n,\{\theta\in\Theta_n: j\varepsilon_n < d_n(\theta,\theta_0)\leq 2j\varepsilon_n\},\alpha,e_n)^{K_3}\leq e^{c_1j^2n\varepsilon_n^2}\, \Pi_n\bigl( W_n(\theta_0,\varepsilon_n) \bigr)^{\alpha}  \eqno (2)$$
for all sufficiently large  integers  $j$  and  $n.$
\smallskip
\noindent Then for each $r$ large enough we have that
$$\Pi_n\bigl(\theta\in \Theta_n:\,d_n(\theta,\theta_0)\geq r\,\varepsilon_n|X^{(n)}\bigr)\longrightarrow 0$$ almost surely as $n\to\infty$. If furthermore there exists $c_2>{1\over c_0}$ such that
$$\sum\limits_{n=1}^\infty{e^{n\,\varepsilon_n^2\,(3+2c_2)}\, \Pi_n(\Theta\setminus \Theta_n)\over \Pi_n\bigl( W_n(\theta_0,\varepsilon_n) \bigr)}<\infty,$$
then there exists a constant $b>0$ such that for each large $r$ and all large $n$,
$$\Pi_n\bigl(\theta\in \Theta:\,d_n(\theta,\theta_0)\geq r\,\varepsilon_n|X^{(n)}\bigr)\leq e^{-bn\varepsilon_n^2}\qquad {\rm almost\ surely}$$
which tends to zero as $n\to\infty$.
\end{thm}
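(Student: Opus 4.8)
The plan is to estimate the numerator and the denominator of the posterior ratio $\Pi_n(B\mid X^{(n)})=\int_B R_\theta^{(n)}\,d\Pi_n\big/\int_\Theta R_\theta^{(n)}\,d\Pi_n$ separately, using Assumption \ref{ass:2} and the entropy bound (2) in the place where a testing argument would normally be used. For the denominator I would first record the identity $P_{\theta_0}^{(n)}\bigl(R_\theta^{(n)}\bigr)^{-1/2}=1+\tfrac32\,H_*(p_{\theta_0}^{(n)},p_\theta^{(n)})^2$, obtained by expanding the square in the definition of $H_*$ and using $\int p_\theta^{(n)}\,d\mu^{(n)}=\int p_{\theta_0}^{(n)}\,d\mu^{(n)}=1$; thus $W_n(\theta_0,\varepsilon_n)$ is precisely $\{\theta:\,P_{\theta_0}^{(n)}(R_\theta^{(n)})^{-1/2}\le e^{\frac32 n\varepsilon_n^2}\}$. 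Writing $J_n=\int_{W_n(\theta_0,\varepsilon_n)}(R_\theta^{(n)})^{-1/2}\,d\Pi_n$ and applying Jensen's inequality to the convex map $x\mapsto x^{-1/2}$ under the normalized restriction of $\Pi_n$ to $W_n(\theta_0,\varepsilon_n)$ gives $\int_\Theta R_\theta^{(n)}\,d\Pi_n\ge \Pi_n(W_n(\theta_0,\varepsilon_n))^3\big/J_n^2$, while the identity yields $P_{\theta_0}^{(n)}J_n\le \Pi_n(W_n(\theta_0,\varepsilon_n))\,e^{\frac32 n\varepsilon_n^2}$. Markov's inequality at level $\Pi_n(W_n(\theta_0,\varepsilon_n))\,e^{(\frac32+c_2)n\varepsilon_n^2}$ then fails with probability at most $e^{-c_2 n\varepsilon_n^2}\le n^{-c_2 c_0}$, which is summable exactly because $c_2>1/c_0$, so Borel--Cantelli gives $\int_\Theta R_\theta^{(n)}\,d\Pi_n\ge \Pi_n(W_n(\theta_0,\varepsilon_n))\,e^{-(3+2c_2)n\varepsilon_n^2}=:D_n$ almost surely for all large $n$. (For the first assertion, where $c_2$ is not prescribed, any fixed $c_2>1/c_0$ may be used here.)

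For the numerator over $\Theta_n$ I would take $r$ to be a power of $2$ and decompose $\{\theta\in\Theta_n:\,d_n(\theta,\theta_0)\ge r\varepsilon_n\}$ into the shells $S_{n,j}=\{\theta\in\Theta_n:\,j\varepsilon_n<d_n(\theta,\theta_0)\le 2j\varepsilon_n\}$ with $j=r,2r,4r,\dots$. Since $\{\theta\in S_{n,j}:\,d_n(\theta,\theta_0)>j\varepsilon_n\}=S_{n,j}$, Assumption \ref{ass:2} with $\Theta_1=S_{n,j}$ and $\varepsilon=j\varepsilon_n$, followed by (2), gives $P_{\theta_0}^{(n)}\bigl(\int_{S_{n,j}}R_\theta^{(n)}\,d\Pi_n\bigr)^{\alpha}\le K_1\,e^{-(K_2-c_1)j^2 n\varepsilon_n^2}\,\Pi_n(W_n(\theta_0,\varepsilon_n))^{\alpha}$, which uses $K_2-c_1>0$. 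Taking $\alpha$-th powers inside Markov's inequality at level $\Pi_n(W_n(\theta_0,\varepsilon_n))\,e^{-\beta j^2 n\varepsilon_n^2}$ with $0<\beta<(K_2-c_1)/\alpha$, the prior mass factors cancel and the failure probability is at most $K_1\,e^{-((K_2-c_1)-\alpha\beta)j^2 n\varepsilon_n^2}$; summing over the dyadic $j\ge r$ and over $n$ (using $n\varepsilon_n^2\ge c_0\log n$) yields a convergent series as soon as $r$ is large. Hence $\int_{\theta\in\Theta_n:\,d_n(\theta,\theta_0)\ge r\varepsilon_n}R_\theta^{(n)}\,d\Pi_n\le 2\,\Pi_n(W_n(\theta_0,\varepsilon_n))\,e^{-\beta r^2 n\varepsilon_n^2}$ almost surely for all large $n$, and dividing by $D_n$ proves the first assertion because $\beta r^2>3+2c_2$ for large $r$ and $n\varepsilon_n^2\to\infty$.

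For the second assertion it remains to bound $\Pi_n(\Theta\setminus\Theta_n\mid X^{(n)})$. Since $P_{\theta_0}^{(n)}\int_{\Theta\setminus\Theta_n}R_\theta^{(n)}\,d\Pi_n=\Pi_n(\Theta\setminus\Theta_n)$, Markov's inequality at level $\Pi_n(W_n(\theta_0,\varepsilon_n))\,e^{-\frac12(3+2c_2)n\varepsilon_n^2}$ fails with probability at most $e^{\frac12(3+2c_2)n\varepsilon_n^2}\,\Pi_n(\Theta\setminus\Theta_n)\big/\Pi_n(W_n(\theta_0,\varepsilon_n))$, which is dominated termwise by the summand of the hypothesis and hence summable; Borel--Cantelli then gives $\int_{\Theta\setminus\Theta_n}R_\theta^{(n)}\,d\Pi_n\le \Pi_n(W_n(\theta_0,\varepsilon_n))\,e^{-\frac12(3+2c_2)n\varepsilon_n^2}$ almost surely for all large $n$, so division by $D_n$ yields $\Pi_n(\Theta\setminus\Theta_n\mid X^{(n)})\le e^{-\frac12(3+2c_2)n\varepsilon_n^2}$. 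Adding this to the bound of the previous paragraph and choosing $r$ so large that $\beta r^2-3-2c_2>\frac12(3+2c_2)$ gives $\Pi_n(\theta\in\Theta:\,d_n(\theta,\theta_0)\ge r\varepsilon_n\mid X^{(n)})\le e^{-bn\varepsilon_n^2}$ almost surely for all large $n$, with any $0<b<\tfrac12(3+2c_2)$; and $e^{-bn\varepsilon_n^2}\le n^{-bc_0}\to0$.

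The main obstacle is not any single estimate but the bookkeeping of constants: the three applications of Markov's inequality must be calibrated so that every Borel--Cantelli series converges — this is exactly where $n\varepsilon_n^2\ge c_0\log n$, $c_1<K_2$ and $c_2>1/c_0$ are used — while at the same time the exponent in the denominator bound comes out as precisely $3+2c_2$, so that it cancels against the exponential weight built into the summability hypothesis and leaves a genuinely exponentially small bound on the sieve complement. The auxiliary computation that must be got right is the identity for $P_{\theta_0}^{(n)}(R_\theta^{(n)})^{-1/2}$, which is what makes $W_n(\theta_0,\varepsilon_n)$ the correct substitute for a Kullback--Leibler neighbourhood.
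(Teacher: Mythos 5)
Your overall strategy coincides with the paper's: lower-bound the denominator via the identity $P_{\theta_0}^{(n)}(R_\theta^{(n)})^{-1/2}=1+\tfrac32 H_*(p_{\theta_0}^{(n)},p_\theta^{(n)})^2$, Jensen, Markov and Borel--Cantelli (the paper's Lemma~\ref{lem:2}); bound the numerator over $\Theta_n$ by a shell decomposition feeding Assumption~\ref{ass:2} and (2) into Markov and Borel--Cantelli (the paper's Lemma~\ref{lem:3}); combine. Your use of dyadic rather than consecutive-integer shells is only a cosmetic variation. The first assertion is proved correctly, and the constant bookkeeping for the numerator over $\Theta_n$ checks out.

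The second assertion, however, contains a genuine error. You take the denominator lower bound $D_n=\Pi_n(W_n(\theta_0,\varepsilon_n))\,e^{-(3+2c_2)n\varepsilon_n^2}$ using the very same $c_2$ that appears in the summability hypothesis, and you set the Markov level for $\int_{\Theta\setminus\Theta_n}R_\theta^{(n)}\,d\Pi_n$ at $\Pi_n(W_n(\theta_0,\varepsilon_n))\,e^{-\frac12(3+2c_2)n\varepsilon_n^2}$. Dividing by $D_n$ then gives
$$\frac{\Pi_n(W_n)\,e^{-\frac12(3+2c_2)n\varepsilon_n^2}}{\Pi_n(W_n)\,e^{-(3+2c_2)n\varepsilon_n^2}}=e^{+\frac12(3+2c_2)n\varepsilon_n^2},$$
not $e^{-\frac12(3+2c_2)n\varepsilon_n^2}$ as you assert; the bound is $\ge1$ and useless. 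The problem is structural, not a typo: with $D_n$ built from the hypothesis's $c_2$, any Markov level $L_n$ with $L_n/D_n$ exponentially small would make $\Pi_n(\Theta\setminus\Theta_n)/L_n$ larger than the hypothesis summand by an exponential factor, so summability would no longer follow; conversely any $L_n\ge D_n$ gives a posterior bound $\ge1$. The paper resolves this by running the denominator Borel--Cantelli with a strictly smaller constant $c=c_2-\tfrac b2$ (still $>1/c_0$), so that the denominator lower bound is $\Pi_n(W_n)e^{-(3+2c_2-b)n\varepsilon_n^2}$ and there is genuine slack between it and the numerator level $\tfrac12 e^{-bn\varepsilon_n^2}\Pi_n(W_n)e^{-(3+2c_2-b)n\varepsilon_n^2}$, whose failure probability reproduces exactly (twice) the hypothesis summand. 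You need this extra degree of freedom; your proof as written closes the gap to zero and the arithmetic fails.
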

Under Assumption \ref{ass:1} and $\varepsilon_n\gtrsim n^{-\gamma}$ with $0<\gamma<1/2$, Ghosal and van der Vaart (\cite{ghv1}, Theorem 2) proved an almost sure convergence rate theorem and obtained that $P_{\theta_0}^{(n)}\Pi_n\bigl(\theta\in \Theta_n:\,d_n(\theta,\theta_0)\geq r_n\,\varepsilon_n|X^{(n)}\bigr)={\rm O}(\varepsilon_n^2)$ for every $r_n\to\infty$. The upper bound $\varepsilon_n^2$ is slower than   $e^{-bn\varepsilon_n^2}$ of Theorem \ref{thm:1}, and moreover Theorem \ref{thm:1} can be applied to obtain the posterior convergence at the rate $\varepsilon_n=\sqrt{\log n/n}$.
Note that when $K_3=0$ the inequality (2) follows from  $\Pi_n\bigl( W_n(\theta_0,\varepsilon_n) \bigr)\geq e^{-{c_1\over \alpha}n\varepsilon_n^2}$. So Theorem \ref{thm:1} gives that in the special case of $K_3=0$ the concentration rate is precisely equal to the convergence rate. We also mention that in the case that the set $\Theta$ is convex and $d_n(\theta,\theta_0)^s$ for some constant $s>0$ is a bounded convex function of $\theta$ in $\Theta$, it turns out from Jensen's inequality  that the posterior expectation $\hat\theta_n:=\int\theta\,d\Pi_n(\theta|X^{(n)})$ under the assumptions of Theorem \ref{thm:1} yields a point estimator of $\theta_0$ with the convergence rate at least $\varepsilon_n$. Together with Proposition \ref{prop:2}, Theorem \ref{thm:1} implies the following direct consequence for the metric $d_n^1$.

\begin{cor}\label{cor:0}
Suppose that $\varepsilon_n>0$, $n\,\varepsilon_n^2\geq c_0\,\log n$ for all large $n$ and some fixed constant $c_0>0$. Suppose that there exist $0<\alpha<1$, $0<\delta<1/2$ and $c_1< {1\over 2}(1-\alpha)(1-2\delta)^2$ such that
$$C(\delta j\varepsilon_n,\{\theta\in\Theta: j\varepsilon_n < d_n^1(\theta,\theta_0)\leq 2j\varepsilon_n\},\alpha,d_n^1)\leq e^{c_1j^2n\varepsilon_n^2}\, \Pi_n\bigl( W_n(\theta_0,\varepsilon_n) \bigr)^{\alpha}  $$
for all sufficiently large  integers  $j$  and  $n.$
Then there exists a constant $b>0$ such that for each large $r$ and all large $n$,
$$\Pi_n\bigl(\theta\in \Theta:\,d_n^1(\theta,\theta_0)\geq r\,\varepsilon_n|X^{(n)}\bigr)\leq e^{-bn\varepsilon_n^2}\qquad {\rm almost\ surely}$$
which tends to zero as $n\to\infty$.
\end{cor}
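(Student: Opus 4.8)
The plan is to obtain the corollary directly from Theorem \ref{thm:1} by making three choices: take $d_n=d_n^1$, take the sieve $\Theta_n=\Theta$, and take $e_n=\delta^{-1}d_n^1$ (a positive rescaling of the metric $d_n^1$). The reason for rescaling $e_n$ by $\delta^{-1}$ is an elementary identity for the Hausdorff $\alpha$-constant: an $e_n$-ball of radius $\rho$ is exactly a $d_n^1$-ball of radius $\delta\rho$, so the family of $e_n$-balls of radius $\leq\rho$ coincides with the family of $d_n^1$-balls of radius $\leq\delta\rho$ as families of subsets of $\Theta$, with the same prior masses; hence $C(\rho,\Theta_1,\alpha,e_n)=C(\delta\rho,\Theta_1,\alpha,d_n^1)$ for every $\rho>0$ and every $\Theta_1\subset\Theta$.

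First I would verify Assumption \ref{ass:2} for the pair $(d_n,e_n)=(d_n^1,\delta^{-1}d_n^1)$: this is precisely the content of Proposition \ref{prop:2}, since its left-hand side is the one in Assumption \ref{ass:2} with $d_n=d_n^1$, and, by the identity above, its Hausdorff factor $C(\delta\varepsilon,\{\theta\in\Theta_1:\,d_n^1(\theta,\theta_0)>\varepsilon\},\alpha,d_n^1)$ equals $C(\varepsilon,\{\theta\in\Theta_1:\,d_n^1(\theta,\theta_0)>\varepsilon\},\alpha,e_n)$, the factor called for in Assumption \ref{ass:2}. Thus Assumption \ref{ass:2} holds with $K_1=2$, $K_2=\tfrac12(1-\alpha)(1-2\delta)^2$ and $K_3=1$. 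Next I would check condition $(2)$ of Theorem \ref{thm:1}: with $\Theta_n=\Theta$, $K_3=1$ and the above pair, its left-hand side is $C(j\varepsilon_n,\{\theta\in\Theta:\,j\varepsilon_n<d_n^1(\theta,\theta_0)\leq 2j\varepsilon_n\},\alpha,e_n)$, which by the identity equals $C(\delta j\varepsilon_n,\{\theta\in\Theta:\,j\varepsilon_n<d_n^1(\theta,\theta_0)\leq 2j\varepsilon_n\},\alpha,d_n^1)$, so condition $(2)$ is literally the hypothesis of the corollary, and the requirement $c_1<K_2$ of Theorem \ref{thm:1} is exactly the assumed $c_1<\tfrac12(1-\alpha)(1-2\delta)^2$.

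For the exponential conclusion I would then invoke the second (``furthermore'') part of Theorem \ref{thm:1}. Its growth condition $n\varepsilon_n^2\geq c_0\log n$ is assumed; its summability condition $\sum_n e^{(3+2c_2)n\varepsilon_n^2}\Pi_n(\Theta\setminus\Theta_n)\big/\Pi_n(W_n(\theta_0,\varepsilon_n))<\infty$ holds trivially, because $\Theta_n=\Theta$ forces $\Pi_n(\Theta\setminus\Theta_n)=0$, and a constant $c_2>1/c_0$ exists since $c_0>0$ is fixed. Theorem \ref{thm:1} then produces $b>0$ such that $\Pi_n(\theta\in\Theta:\,d_n^1(\theta,\theta_0)\geq r\varepsilon_n\mid X^{(n)})\leq e^{-bn\varepsilon_n^2}$ almost surely for every large $r$ and all large $n$, which is the assertion.

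The only step requiring a moment's care is the rescaling identity for the Hausdorff $\alpha$-constant, which is what reconciles the radius $\delta\varepsilon$ appearing in Proposition \ref{prop:2} (and in the hypothesis of the corollary) with the radius $\varepsilon$ appearing in Assumption \ref{ass:2} and in condition $(2)$; everything else is bookkeeping, made lighter by the choice $\Theta_n=\Theta$, which kills the sieve and prior-mass terms of the general statement. If one prefers to avoid the rescaling device, the same result follows by re-running the proof of Theorem \ref{thm:1} verbatim with each use of Assumption \ref{ass:2} replaced by Proposition \ref{prop:2}, so that the shell estimate $P_{\theta_0}^{(n)}\big(\int_{j\varepsilon_n<d_n^1(\theta,\theta_0)\leq 2j\varepsilon_n}R_\theta^{(n)}(X^{(n)})\,\Pi_n(d\theta)\big)^\alpha\leq 2\,e^{-(K_2-c_1)j^2n\varepsilon_n^2}\,\Pi_n(W_n(\theta_0,\varepsilon_n))^\alpha$ feeds into the same Borel--Cantelli argument (lower bounding the denominator $\int_\Theta R_\theta^{(n)}\,\Pi_n(d\theta)$ by $\Pi_n(W_n(\theta_0,\varepsilon_n))e^{-Ln\varepsilon_n^2}$ via a Cauchy--Schwarz and Markov estimate on $(R_\theta^{(n)})^{-1/2}$, summing the shell bounds over dyadic $j$ using subadditivity of $x\mapsto x^\alpha$, and applying Markov's inequality with $r$ large).
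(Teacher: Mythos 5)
Your proposal is correct and matches the paper's (unwritten) proof, which simply remarks that Corollary 1 is a direct consequence of Theorem 1 together with Proposition 2; you have made the required bookkeeping explicit and clean via the rescaling device $e_n=\delta^{-1}d_n^1$, which correctly converts the radius $\delta\varepsilon$ in Proposition 2 and in the hypothesis into the radius $\varepsilon$ appearing in Assumption 2 and condition (2) of Theorem 1. The choices $d_n=d_n^1$, $\Theta_n=\Theta$, $(K_1,K_2,K_3)=(2,\tfrac12(1-\alpha)(1-2\delta)^2,1)$ and the trivial summability check with $\Pi_n(\Theta\setminus\Theta_n)=0$ are exactly what the author intends.
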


It is also worth pointing out that from Lemma 1 in Xing and Ranneby \cite{xir1} it follows that the inequality (2) can be derived from the following two inequalities:

$$N\big(j\varepsilon_n,\{\theta\in\Theta_n: j\varepsilon_n < d_n(\theta,\theta_0)\leq 2j\varepsilon_n\},e_n\big)^{K_3(1-\alpha)}\leq e^{c_3j^2n\varepsilon_n^2}$$
and
$$\Pi_n\big(\theta\in\Theta_n: j\varepsilon_n < d_n(\theta,\theta_0)\leq 2j\varepsilon_n\big)^{K_3\alpha} \leq e^{c_4j^2n\varepsilon_n^2}\, \Pi_n\bigl( W_n(\theta_0,\varepsilon_n) \bigr)^{\alpha} $$
for some constants $c_3$ and $c_4$ with $c_3+c_4<K_2$. Thus, we have the following consequence.

\begin{cor}\label{cor:1}
Suppose that Assumption \ref{ass:2} holds and that $\varepsilon_n>0$, $n\,\varepsilon_n^2\geq c_0\,\log n$ for all large $n$ and some fixed constant $c_0>0$. Suppose that there exist constants  $c_1,\ c_2,\ c_3$ with $c_1(1-\alpha)+c_2\alpha <K_2$ and $c_3>1/c_0$ and there exists a sequence of subsets $\Theta_n$ on $\Theta$ such that for all large $j$  and  $n,$
$$\leqno\quad (i)\quad N\big(j\varepsilon_n,\{\theta\in\Theta_n: j\varepsilon_n < d_n(\theta,\theta_0)\leq 2j\varepsilon_n\},e_n\big)^{K_3}\leq e^{c_1j^2n\varepsilon_n^2};$$
$$\leqno\quad (ii)\quad \Pi_n\big(\theta\in\Theta_n: j\varepsilon_n < d_n(\theta,\theta_0)\leq 2j\varepsilon_n\big)^{K_3}\leq e^{c_2j^2n\varepsilon_n^2}\, \Pi_n\bigl( W_n(\theta_0,\varepsilon_n) \bigr);$$
$$\leqno\quad (iii)\quad \sum\limits_{n=1}^\infty{e^{n\,\varepsilon_n^2\,(3+2c_3)}\, \Pi_n(\Theta\setminus \Theta_n)\over \Pi_n\bigl( W_n(\theta_0,\varepsilon_n) \bigr)}<\infty.$$
\smallskip
\noindent Then there exists a constant $b>0$ such that for each large $r$ and all large $n$,
$$\Pi_n\bigl(\theta\in \Theta:\,d_n(\theta,\theta_0)\geq r\,\varepsilon_n|X^{(n)}\bigr)\leq e^{-bn\varepsilon_n^2}\qquad {\rm almost\ surely}$$
which tends to zero as $n\to\infty$.
\end{cor}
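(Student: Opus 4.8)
The plan is to deduce this corollary from Theorem \ref{thm:1} by checking its hypotheses; the only point requiring work is to produce inequality (2) of Theorem \ref{thm:1}, which will come from (i) and (ii) via the entropy comparison $C(\delta,\Theta_1,\alpha,e_n)\leq \Pi_n(\Theta_1)^{\alpha}\,N(\delta,\Theta_1,e_n)^{1-\alpha}$ recalled in Section 2 (from \cite{xir1}).

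First I would fix, for integers $j$ and $n$, the set $A_{n,j}=\{\theta\in\Theta_n:\,j\varepsilon_n<d_n(\theta,\theta_0)\leq 2j\varepsilon_n\}$ and apply that comparison with $\Theta_1=A_{n,j}$ and $\delta=j\varepsilon_n$. Raising both sides to the power $K_3\geq 0$ gives
$$C(j\varepsilon_n,A_{n,j},\alpha,e_n)^{K_3}\leq \bigl(\Pi_n(A_{n,j})^{K_3}\bigr)^{\alpha}\,\bigl(N(j\varepsilon_n,A_{n,j},e_n)^{K_3}\bigr)^{1-\alpha}.$$
Substituting the bound $N(j\varepsilon_n,A_{n,j},e_n)^{K_3}\leq e^{c_1j^2n\varepsilon_n^2}$ from (i) and the bound $\Pi_n(A_{n,j})^{K_3}\leq e^{c_2j^2n\varepsilon_n^2}\,\Pi_n\bigl(W_n(\theta_0,\varepsilon_n)\bigr)$ from (ii), and multiplying, I would obtain
$$C(j\varepsilon_n,A_{n,j},\alpha,e_n)^{K_3}\leq e^{(c_1(1-\alpha)+c_2\alpha)\,j^2n\varepsilon_n^2}\,\Pi_n\bigl(W_n(\theta_0,\varepsilon_n)\bigr)^{\alpha}$$
for all large $j$ and $n$. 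Since by hypothesis $c_1(1-\alpha)+c_2\alpha<K_2$, this is precisely inequality (2) of Theorem \ref{thm:1}, with the constant $c_1(1-\alpha)+c_2\alpha$ in the role of the ``$c_1$'' required there.

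It then remains to match the remaining hypotheses of Theorem \ref{thm:1}: the conditions $\varepsilon_n>0$, $n\varepsilon_n^2\geq c_0\log n$, and Assumption \ref{ass:2} are assumed directly in the corollary, while condition (iii), with $c_3>1/c_0$, is exactly the summability requirement $\sum_n e^{n\varepsilon_n^2(3+2c_3)}\Pi_n(\Theta\setminus\Theta_n)/\Pi_n(W_n(\theta_0,\varepsilon_n))<\infty$ needed in the second half of Theorem \ref{thm:1} (its ``$c_2$'' being our $c_3$). Applying Theorem \ref{thm:1} then yields a constant $b>0$ with $\Pi_n\bigl(\theta\in\Theta:\,d_n(\theta,\theta_0)\geq r\varepsilon_n\,\big|\,X^{(n)}\bigr)\leq e^{-bn\varepsilon_n^2}$ almost surely for each large $r$ and all large $n$, which is the assertion.

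There is no real obstacle here; the only care needed is in the constant bookkeeping --- keeping the $c_i$ of the corollary distinct from those of Theorem \ref{thm:1}, and correctly tracking the exponents $\alpha$, $1-\alpha$ and $K_3$ through the entropy comparison --- together with the degenerate case $K_3=0$, in which (i) is vacuous and (ii) reduces to the lower bound $\Pi_n\bigl(W_n(\theta_0,\varepsilon_n)\bigr)\geq e^{-c_2j^2n\varepsilon_n^2}$, so that (2) still holds as noted in the remark following Theorem \ref{thm:1}.
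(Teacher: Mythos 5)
Your derivation is correct and follows essentially the same route the paper indicates: the paragraph just before Corollary \ref{cor:1} states exactly this reduction to inequality (2) of Theorem \ref{thm:1} via the entropy comparison $\Pi_n(\Theta_1)^\alpha\leq C(\delta,\Theta_1,\alpha,e_n)\leq \Pi_n(\Theta_1)^\alpha N(\delta,\Theta_1,e_n)^{1-\alpha}$, with your $c_1(1-\alpha)$ and $c_2\alpha$ playing the role of the paper's $c_3$ and $c_4$. The constant bookkeeping through $\alpha$, $1-\alpha$, and $K_3$ is handled correctly, and your remark about the degenerate case $K_3=0$ matches the comment after Theorem \ref{thm:1}.
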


Our next theorem gives another different version of Theorem \ref{thm:1}.

\begin{thm}\label{thm:2}
The following statements are true.

\smallskip
{\rm (a)}  Theorem \ref{thm:1} holds if the inequality {\rm (2)} is replaced by
$$C(\varepsilon_n,\Theta_n,\alpha,e_n)^{K_3}\leq e^{c_1n\varepsilon_n^2}\, \Pi_n\bigl( W_n(\theta_0,\varepsilon_n) \bigr)^{\alpha} \qquad{\rm for\ all\ large\ }n.  $$

{\rm (b)}  Corollary \ref{cor:1} holds if both {\rm (i)} and {\rm (ii)} are replaced by
$$ N(\varepsilon_n,\Theta_n,e_n)^{K_3}\leq e^{c_1n\varepsilon_n^2}\quad{\rm and}\quad \Pi_n(\Theta_n)^{K_3}\leq e^{c_2n\varepsilon_n^2}\, \Pi_n\bigl( W_n(\theta_0,\varepsilon_n) \bigr). $$
\end{thm}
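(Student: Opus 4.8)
The plan is to show that the two variants in (a) and (b) are in fact stronger than (or at least as strong as) the hypotheses already used in Theorem \ref{thm:1} and Corollary \ref{cor:1}, so that no new argument is needed beyond a monotonicity/summation bookkeeping step. For part (a), the key observation is that the set $\{\theta\in\Theta_n:\,j\varepsilon_n<d_n(\theta,\theta_0)\le 2j\varepsilon_n\}$ is a subset of $\Theta_n$, and the Hausdorff $\alpha$-constant $C(\delta,\Theta_1,\alpha,e_n)$ is monotone in both arguments in the appropriate directions: it is nondecreasing in $\Theta_1$ (a larger set needs at least as large a covering sum) and nonincreasing in the radius $\delta$ (balls of larger radius cover more cheaply). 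Hence for $j\ge 1$,
\[
C(j\varepsilon_n,\{\theta\in\Theta_n:\,j\varepsilon_n<d_n(\theta,\theta_0)\le 2j\varepsilon_n\},\alpha,e_n)
\le C(\varepsilon_n,\Theta_n,\alpha,e_n),
\]
so the hypothesis of (a), namely $C(\varepsilon_n,\Theta_n,\alpha,e_n)^{K_3}\le e^{c_1n\varepsilon_n^2}\,\Pi_n(W_n(\theta_0,\varepsilon_n))^\alpha$, implies $C(j\varepsilon_n,\{\cdots\},\alpha,e_n)^{K_3}\le e^{c_1n\varepsilon_n^2}\,\Pi_n(W_n(\theta_0,\varepsilon_n))^\alpha\le e^{c_1j^2n\varepsilon_n^2}\,\Pi_n(W_n(\theta_0,\varepsilon_n))^\alpha$ for every $j\ge 1$, which is exactly inequality (2) with the same constant $c_1<K_2$. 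The almost-sure conclusion and, under the extra summability condition, the exponential bound then follow verbatim from Theorem \ref{thm:1}. I would also note that if $K_3=0$ both sides are $1$ and the claim is vacuous, so one may assume $K_3>0$.

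For part (b) I would argue analogously at the level of the two entropy/prior inequalities that, via Lemma 1 of Xing and Ranneby \cite{xir1}, imply (2) and hence feed Corollary \ref{cor:1}. The covering number $N(\delta,\Theta_1,e_n)$ is nondecreasing in $\Theta_1$ and nonincreasing in $\delta$, so $N(j\varepsilon_n,\{\theta\in\Theta_n:\,j\varepsilon_n<d_n(\theta,\theta_0)\le 2j\varepsilon_n\},e_n)\le N(\varepsilon_n,\Theta_n,e_n)$; thus the first displayed hypothesis of (b) gives condition (i) of Corollary \ref{cor:1} with the same $c_1$ (again using $j^2\ge 1$). Likewise the prior mass is monotone in the set, so $\Pi_n(\{\theta\in\Theta_n:\,j\varepsilon_n<d_n(\theta,\theta_0)\le 2j\varepsilon_n\})\le\Pi_n(\Theta_n)$, and the second displayed hypothesis of (b) yields condition (ii) with the same $c_2$. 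Condition (iii) is unchanged. Since $c_1(1-\alpha)+c_2\alpha<K_2$ is retained, Corollary \ref{cor:1} applies and delivers the exponential posterior bound.

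The routine but necessary technical point to verify carefully is precisely the monotonicity of the Hausdorff $\alpha$-constant in the set $\Theta_1$: one must check that a covering of a superset can be restricted to a covering of a subset without increasing the sum of $\alpha$-th powers of the prior masses of the balls — this is immediate since the same family of balls still covers the subset, and discarding balls only decreases the sum. I expect this to be the main (minor) obstacle: making sure the definition of $J(\delta,\Theta_1,\alpha,e_n)$ as an infimum over coverings genuinely gives both monotonicity properties, and that the centers of the balls are allowed to lie anywhere in $\Theta$ (not required to lie in $\Theta_1$), so that shrinking the target set cannot force a more expensive cover. Once that is in hand, parts (a) and (b) are purely a matter of substituting the stronger hypotheses into the already-proved Theorem \ref{thm:1} and Corollary \ref{cor:1}.
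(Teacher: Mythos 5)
Your proof is correct and follows the same route as the paper, which proves Theorem \ref{thm:2} in a single line by invoking the radius monotonicity $C(\delta\varepsilon_n,\Theta_n,\alpha,e_n)\leq C(\varepsilon_n,\Theta_n,\alpha,e_n)$ for $\delta\geq 1$ and saying to argue as in Theorem \ref{thm:1}; you supply exactly the monotonicity steps (in both the radius and the set argument of the Hausdorff $\alpha$-constant, and the analogues for $N$ and $\Pi_n$) that the paper's ``trivial'' and ``similarly'' are hiding, and you correctly reduce the new hypotheses to inequality (2) resp.\ conditions (i)--(ii). One minor aside: when $K_3=0$ only the entropy side collapses to $1$, not both sides, so the hypothesis still has content and still implies (2) via $j^2\geq 1$ --- but this has no bearing on the validity of your argument.
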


In order to deal with convergence rates of posterior distributions in the sense of in-probability, following Ghosal and van der Vaart \cite{ghv1}, we adopt notations
$V_k(f,g)=\int_{\mathfrak{X}^{(n)}} f\big|\log (f/g)\big|^k\,d\mu^{(n)}$ and
$V_{k,0}(f,g)=\int_{\mathfrak{X}^{(n)}} f\big|\log (f/g)-K(f,g)\big|^k\,d\mu^{(n)}$, where $K(f,g)=\int_{\mathfrak{X}^{(n)}} f\log (f/g)\,d\mu^{(n)}$ is the Kullback-Leibler divergence of densities $f$ and $g$. Denote
$$B_n(\theta_0,\varepsilon;k)=\big\{\theta\in \Theta:\, K(p_{\theta_0}^{(n)},p_{\theta}^{(n)}\,)\leq n\varepsilon^2,\ V_{k,0}(p_{\theta_0}^{(n)},p_{\theta}^{(n)}\,)\leq n^{k/2}\varepsilon^k\big\}.$$
Our result in this direction is

\begin{thm}\label{thm:3}Suppose that Assumption \ref{ass:2} holds and that $k>1$, $\varepsilon_n>0$, $n\,\varepsilon_n^2\geq c_0$ for all large $n$ and some fixed constant $c_0>0$. Suppose that there exist a constant  $c_1< K_2$ and a sequence of subsets $\Theta_n$ on $\Theta$ such that
$$C(j\varepsilon_n,\{\theta\in\Theta_n: j\varepsilon_n < d_n(\theta,\theta_0)\leq 2j\varepsilon_n\},\alpha,e_n)^{K_3}\leq e^{c_1j^2n\varepsilon_n^2}\, \Pi_n\bigl( B_n(\theta_0,\varepsilon_n;k) \bigr)^{\alpha}  \eqno (3)$$
for all sufficiently large  integers  $j$  and  $n.$
\smallskip
\noindent Then for each $r_n\to\infty$ we have that
$$\Pi_n\bigl(\theta\in \Theta_n:\,d_n(\theta,\theta_0)\geq r_n\,\varepsilon_n|X^{(n)}\bigr)\longrightarrow 0$$ in probability as $n\to\infty$. If furthermore there exists $c_2>1$ such that
${e^{c_2n\varepsilon_n^2}\, \Pi_n(\Theta\setminus \Theta_n)\over \Pi_n\bigl( B_n(\theta_0,\varepsilon_n;k)  \bigr)}\longrightarrow 0$ as $n\to\infty$,
then $$\Pi_n\bigl(\theta\in \Theta:\,d_n(\theta,\theta_0)\geq r_n\,\varepsilon_n|X^{(n)}\bigr)\longrightarrow 0$$
 in probability as $n\to\infty$.
\end{thm}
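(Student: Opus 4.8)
The plan is to mimic the classical testing-plus-prior-mass scheme of Ghosal--Ghosh--van der Vaart, but with the exponential test bound replaced by the integration Assumption \ref{ass:2} and with Markov's inequality applied at the power $\alpha$. Fix $r$ large and decompose the shell $\{\theta\in\Theta_n:d_n(\theta,\theta_0)\ge r\varepsilon_n\}$ into annuli $A_j=\{\theta\in\Theta_n: j\varepsilon_n<d_n(\theta,\theta_0)\le 2j\varepsilon_n\}$ over dyadic or integer values $j\ge r$. For the posterior numerator restricted to $A_j$ we have $\int_{A_j}R_\theta^{(n)}\,\Pi_n(d\theta)$, and Assumption \ref{ass:2} applied with $\Theta_1=\Theta_n$ and radius $j\varepsilon_n$ gives
$$P_{\theta_0}^{(n)}\Big(\int_{A_j}R_\theta^{(n)}(X^{(n)})\,\Pi_n(d\theta)\Big)^\alpha\le K_1 e^{-K_2 j^2 n\varepsilon_n^2}\,C(j\varepsilon_n,A_j,\alpha,e_n)^{K_3},$$
and then the entropy hypothesis (3) bounds the last constant by $e^{c_1 j^2 n\varepsilon_n^2}\Pi_n(B_n(\theta_0,\varepsilon_n;k))^\alpha$. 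Summing the resulting geometric-type series in $j$ (using $c_1<K_2$ so the exponent $-(K_2-c_1)j^2n\varepsilon_n^2$ is summable and in fact dominated by its $j=r$ term) yields
$$P_{\theta_0}^{(n)}\Big(\int_{\{\theta\in\Theta_n:d_n(\theta,\theta_0)\ge r\varepsilon_n\}}R_\theta^{(n)}\,\Pi_n(d\theta)\Big)^\alpha\lesssim e^{-(K_2-c_1)r^2 n\varepsilon_n^2}\,\Pi_n\big(B_n(\theta_0,\varepsilon_n;k)\big)^\alpha.$$

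Next I would lower-bound the denominator $D_n:=\int_\Theta R_\theta^{(n)}(X^{(n)})\,\Pi_n(d\theta)$. Here is where the sets $B_n(\theta_0,\varepsilon_n;k)$ enter: restricting the integral to $B_n(\theta_0,\varepsilon_n;k)$ and using Jensen together with a control of the fluctuations of $\sum_i\log R$ around its mean, one shows (this is the non-iid analogue of Lemma 8.1 of Ghosal--Ghosh--van der Vaart, valid because the $X_i$ are independent under $P_{\theta_0}^\infty$) that for any $C>0$,
$$P_{\theta_0}^{(n)}\Big(D_n\le e^{-(1+C)n\varepsilon_n^2}\,\Pi_n\big(B_n(\theta_0,\varepsilon_n;k)\big)\Big)\le \frac{V_k\text{-type term}}{C^k n^{k/2}\varepsilon_n^k}\longrightarrow 0$$
since $k>1$ and $n\varepsilon_n^2\ge c_0$; the $V_{k,0}$ bound in the definition of $B_n$ is exactly what makes this Markov bound tend to zero. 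Combining with the numerator estimate via Markov's inequality at power $\alpha$ on the event $\{D_n> e^{-(1+C)n\varepsilon_n^2}\Pi_n(B_n)\}$ gives, for any $\eta>0$,
$$P_{\theta_0}^{(n)}\Big(\Pi_n(\theta\in\Theta_n:d_n(\theta,\theta_0)\ge r\varepsilon_n\mid X^{(n)})>\eta\Big)\le \eta^{-\alpha}e^{\alpha(1+C)n\varepsilon_n^2}\cdot\frac{\text{num. bound}}{\Pi_n(B_n)^\alpha}+o(1),$$
and the ratio here is $\lesssim e^{-\alpha(K_2-c_1)r^2 n\varepsilon_n^2+\alpha(1+C)n\varepsilon_n^2}$, which for $r$ large enough is $\le e^{-\alpha n\varepsilon_n^2}\to 0$ (using $n\varepsilon_n^2\ge c_0$). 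Replacing $r$ by an arbitrary $r_n\to\infty$ works verbatim, which gives the first conclusion: convergence in probability over $\Theta_n$.

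For the second conclusion, the remaining mass is $\Pi_n(\Theta\setminus\Theta_n\mid X^{(n)})=\big(\int_{\Theta\setminus\Theta_n}R_\theta^{(n)}\,\Pi_n(d\theta)\big)/D_n$. Bounding the numerator crudely by $P_{\theta_0}^{(n)}\int_{\Theta\setminus\Theta_n}R_\theta^{(n)}\,\Pi_n(d\theta)=\Pi_n(\Theta\setminus\Theta_n)$ (Fubini, since $P_{\theta_0}^{(n)}R_\theta^{(n)}=1$) and combining with the denominator lower bound, Markov's inequality gives $\Pi_n(\Theta\setminus\Theta_n\mid X^{(n)})\le e^{(1+C)n\varepsilon_n^2}\Pi_n(\Theta\setminus\Theta_n)/\Pi_n(B_n)$ on a set of probability $1-o(1)$; choosing $C$ so that $1+C<c_2$, the hypothesis $e^{c_2 n\varepsilon_n^2}\Pi_n(\Theta\setminus\Theta_n)/\Pi_n(B_n)\to 0$ forces this to $0$. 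Adding the two pieces finishes the proof.

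The main obstacle I anticipate is the denominator bound in the non-iid setting: one must verify that the fluctuation control of $\log D_n$ around $-K(p_{\theta_0}^{(n)},p_\theta^{(n)})$ still goes through when the $X_i$ are merely independent (not identically distributed), which is why the definition of $B_n(\theta_0,\varepsilon;k)$ uses the $k$-th moment quantity $V_{k,0}$ and $n^{k/2}\varepsilon^k$ scaling rather than a variance — Chebyshev must be replaced by a Markov bound at order $k>1$ on $|\sum_i(\log R_{\theta,i}-\mathbb E\log R_{\theta,i})|$, and one needs the $c_r$-inequality to pass from a per-observation moment bound to the sum. Everything else (the dyadic peeling, summing the geometric series, the two applications of Markov at power $\alpha$) is routine once Assumption \ref{ass:2} is in hand.
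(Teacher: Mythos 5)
Your outline is essentially the paper's own route: the paper states that Theorem \ref{thm:3} is proved exactly like Theorem \ref{thm:1} (annular peeling via Lemma \ref{lem:3}, Markov at power $\alpha$, summing the geometric series) except that the almost-sure denominator lower bound (Lemma \ref{lem:2}, based on $W_n$ and $H_*$) is replaced by Lemma 10 of Ghosal and van der Vaart, which is precisely the $B_n(\theta_0,\varepsilon;k)$-based lower bound you invoke. One small misconception in your closing paragraph: Lemma 10 of Ghosal and van der Vaart does \emph{not} require the coordinates of $X^{(n)}$ to be independent, and no $c_r$-inequality or per-observation decomposition of $\log R_\theta^{(n)}$ is needed. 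The quantities $K(p_{\theta_0}^{(n)},p_\theta^{(n)})$ and $V_{k,0}(p_{\theta_0}^{(n)},p_\theta^{(n)})$ are defined directly at the level of the joint $n$-dimensional density, and the denominator bound follows from Jensen, Fubini and a single Markov inequality at order $k$ applied to $\int\log R_\theta^{(n)}\,d\Pi(\theta)$ over $B_n$; this is why the theorem applies to Markov chains and other dependent data, not merely independent-but-not-identically-distributed observations. With that clarification your argument is correct and matches the paper.
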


Similarly, Theorem \ref{thm:3} holds if one replaces the inequality (3) by
$$C(\varepsilon_n,\Theta_n,\alpha,e_n)^{K_3}\leq e^{c_1n\varepsilon_n^2}\, \Pi_n\bigl(B_n(\theta_0,\varepsilon_n;k)  \bigr)^{\alpha}\quad {\rm for\ large}\ n.  $$
Moreover, as a consequence of Theorem \ref{thm:3} we obtain the following result which is a slightly stronger version of Theorem 1 in Ghosal and van der Vaart \cite{ghv1}.

\begin{cor}\label{cor:2}
Suppose that Assumption \ref{ass:2} holds and that $k>1$, $\varepsilon_n>0$, $n\,\varepsilon_n^2\geq c_0$ for all large $n$ and some fixed constant $c_0>0$. Suppose that there exist constants $c_1,\ c_2>0$ with $c_1(1-\alpha)+c_2\alpha <K_2$, $c_3>1$ and a sequence of subsets $\Theta_n$ on $\Theta$ such that for all large  $j$  and  $n,$
$$\leqno\quad (i)\quad N\big(j\varepsilon_n,\{\theta\in\Theta_n: j\varepsilon_n < d_n(\theta,\theta_0)\leq 2j\varepsilon_n\},e_n\big)^{K_3}\leq e^{c_1j^2n\varepsilon_n^2};$$
$$\leqno \quad (ii)\quad \Pi_n\big(\theta\in\Theta_n: j\varepsilon_n < d_n(\theta,\theta_0)\leq 2j\varepsilon_n\big)^{K_3} \leq e^{c_2j^2n\varepsilon_n^2}\, \Pi_n\bigl(B_n(\theta_0,\varepsilon_n;k) \bigr); $$
$$\leqno \quad (iii)\quad {e^{c_3n\varepsilon_n^2}\, \Pi_n(\Theta\setminus \Theta_n)\over \Pi_n\bigl( B_n(\theta_0,\varepsilon_n;k)  \bigr)}\longrightarrow 0\qquad {\rm as}\quad n\to\infty.$$
\smallskip
\noindent Then for each $r_n\to\infty$ we have that
$$\Pi_n\bigl(\theta\in \Theta:\,d_n(\theta,\theta_0)\geq r_n\,\varepsilon_n|X^{(n)}\bigr)\longrightarrow 0$$ in probability as $n\to\infty$.
\end{cor}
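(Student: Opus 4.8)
The plan is to deduce Corollary \ref{cor:2} from Theorem \ref{thm:3} by verifying its hypotheses, exactly as Corollary \ref{cor:1} was deduced from Theorem \ref{thm:1}. The main point is to show that conditions (i) and (ii) here, together with the $\alpha$-constant estimate $C(\delta,\Theta_1,\alpha,e_n)\leq \Pi_n(\Theta_1)^\alpha N(\delta,\Theta_1,e_n)^{1-\alpha}$ recalled in the excerpt, imply the single inequality (3) of Theorem \ref{thm:3}. First I would apply that estimate with $\Theta_1 = \{\theta\in\Theta_n:\, j\varepsilon_n < d_n(\theta,\theta_0)\leq 2j\varepsilon_n\}$ and $\delta = j\varepsilon_n$, raised to the power $K_3$, to obtain
$$C(j\varepsilon_n,\Theta_1,\alpha,e_n)^{K_3}\leq \Pi_n(\Theta_1)^{K_3\alpha}\, N(j\varepsilon_n,\Theta_1,e_n)^{K_3(1-\alpha)}.$$
Then I would bound the two factors on the right separately: by (i), $N(j\varepsilon_n,\Theta_1,e_n)^{K_3(1-\alpha)}\leq e^{c_1(1-\alpha)j^2n\varepsilon_n^2}$, and by (ii), $\Pi_n(\Theta_1)^{K_3\alpha}\leq e^{c_2\alpha j^2n\varepsilon_n^2}\,\Pi_n(B_n(\theta_0,\varepsilon_n;k))^{\alpha}$. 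Multiplying gives
$$C(j\varepsilon_n,\Theta_1,\alpha,e_n)^{K_3}\leq e^{(c_1(1-\alpha)+c_2\alpha)j^2n\varepsilon_n^2}\,\Pi_n\bigl(B_n(\theta_0,\varepsilon_n;k)\bigr)^{\alpha},$$
which is precisely (3) with $c_1$ there replaced by $c_1(1-\alpha)+c_2\alpha$. Since the hypothesis $c_1(1-\alpha)+c_2\alpha < K_2$ ensures this new constant is admissible (strictly below $K_2$), Theorem \ref{thm:3} applies and yields $\Pi_n(\theta\in\Theta_n:\,d_n(\theta,\theta_0)\geq r_n\varepsilon_n|X^{(n)})\to 0$ in probability for every $r_n\to\infty$.

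For the final conclusion involving all of $\Theta$ rather than $\Theta_n$, I would invoke the second half of Theorem \ref{thm:3}: its additional hypothesis is that $e^{c_2 n\varepsilon_n^2}\Pi_n(\Theta\setminus\Theta_n)\big/\Pi_n(B_n(\theta_0,\varepsilon_n;k))\to 0$ for some $c_2>1$, and this is exactly condition (iii) here with the constant named $c_3$ (the name clash between the $c_2$ of Theorem \ref{thm:3} and the $c_2$ of Corollary \ref{cor:2} is only notational — one uses $c_3$ of the corollary in the role of $c_2$ of the theorem). Hence the in-probability convergence extends from $\Theta_n$ to $\Theta$, giving the stated conclusion.

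The only mildly delicate bookkeeping is making sure the exponents match: condition (i) carries the power $K_3$ (not $K_3(1-\alpha)$) and condition (ii) carries $K_3$ (not $K_3\alpha$), so after raising the covering-number factor to the $(1-\alpha)$ power and the prior-mass factor to the $\alpha$ power inside the $\alpha$-constant estimate, one indeed lands on the clean combination $c_1(1-\alpha)+c_2\alpha$ in the exponent. I do not expect any genuine obstacle here — this corollary is a routine repackaging of Theorem \ref{thm:3} via the Hausdorff $\alpha$-entropy bound, and the claim that it is "slightly stronger" than Theorem 1 of Ghosal and van der Vaart comes from the facts that $C(\cdot)\leq \Pi_n(\cdot)^\alpha N(\cdot)^{1-\alpha}$ can be strictly smaller than the metric-entropy quantity and that Assumption \ref{ass:2} is weaker than Assumption \ref{ass:1} (Proposition \ref{prop:1}), so no construction of uniformly exponentially consistent tests is needed.
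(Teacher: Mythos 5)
Your proposal is correct and matches the route the paper intends: Corollary \ref{cor:2} is stated without a separate proof as a direct consequence of Theorem \ref{thm:3}, exactly parallel to how Corollary \ref{cor:1} follows from Theorem \ref{thm:1} via the Hausdorff-entropy bound $C(\delta,\Theta_1,\alpha,e_n)\leq\Pi_n(\Theta_1)^\alpha N(\delta,\Theta_1,e_n)^{1-\alpha}$ raised to the power $K_3$. Your bookkeeping of the exponents — obtaining $c_1(1-\alpha)+c_2\alpha<K_2$ in place of the single constant in (3), and matching the corollary's $c_3>1$ with the $c_2>1$ of Theorem \ref{thm:3} — is exactly the intended argument.
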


\section{Some Special Cases}
In this section we apply our general convergence rate theorems to i.n.i.d. observations and Markov processes. For i.n.i.d. observations we establish almost sure convergence rate theorems both on pseudoposterior distributions and on posterior distributions. We derive an almost sure posterior convergence rate theorem for general Markov processes.

\subsection{Independent observations} We consider the case that $X^{(n)}$ is a random vector $(X_1,X_2,\dots,X_n)$ of independent variables $X_i$, where each $X_i$ is generated from  some density $p_{\theta,i}$ relative to a $\sigma$-finite measure $\mu_i$ on $(\mathfrak{X}_i,\mathcal{A}_i)$, and that $P_\theta^{(n)}$ is the product distribution with the density $p_\theta^{(n)}(X^{(n)})=\prod_{i=1}^np_{\theta,i}(x_i)$ relative to the direct product measure $\mu^{(n)}=\mu_1\times\mu_2\times\dots\times\mu_n$ on $\mathfrak{X}^{(n)}=\mathfrak{X}_1\times \mathfrak{X}_2\times\dots\times \mathfrak{X}_n$. Assume that $d_n^0(\theta_1,\theta_2)=\big({1\over n}\sum_{i=1}^nH_i(p_{\theta_1,i},p_{\theta_2,i})^2\big)^{1/2}$, where each $H_i(p_{\theta_1,i},p_{\theta_2,i})=\big(\int(\sqrt{p_{\theta_1,i}}
-\sqrt{p_{\theta_2,i}})^2\,d\mu_i\big)^{1/2}$ is the Hellinger diatance between $p_{\theta_1,i}$ and $p_{\theta_2,i}$ relative to $\mu_i$ on $\mathfrak{X}_i$. It is clear that $d_n^0$ satisfies the triangle inequality and hence is a metric on $\Theta$. Denote $H_{*,i}(p_{\theta_1,i},p_{\theta_2,i})=\big(\int(\sqrt{p_{\theta_1,i}}
-\sqrt{p_{\theta_2,i}})^2({2\over 3}\sqrt{p_{\theta_1,i}\over p_{\theta_2,i}} +{1\over 3}) \,d\mu_i\big)^{1/2}.$
An advantage of adoption of $H_*$ in computation of concentration rates for independent observations is that we have the following quality
$$1+{3\over 2}\,H_*\Big(\prod_{i=1}^np_{\theta_1,i},\prod_{i=1}^np_{\theta_2,i}\Big)^2=\prod_{i=1}^n \big(1+ {3\over 2}\,H_{*,i}(p_{\theta_1,i},p_{\theta_2,i})^2\big)$$
$$\leq e^{{3\over 2}\,\sum_{i=1}^n H_{*,i}(p_{\theta_1,i},p_{\theta_2,i})^2},$$
which implies that $W_n(\theta_0,\varepsilon)$ contains the set
$$\overline{W}_n(\theta_0,\varepsilon):=\big\{\theta\in \Theta:\, {1\over n}\sum_{i=1}^nH_{*,i}(p_{\theta_0,i},p_{\theta,i})^2\leq \varepsilon^2\big\}.$$
Similarly, we have
$$1-{1\over 2}\,H\Big(\prod_{i=1}^np_{\theta_1,i},\prod_{i=1}^np_{\theta_2,i}\Big)^2=\prod_{i=1}^n \big(1- {1\over 2}\,H_i(p_{\theta_1,i},p_{\theta_2,i})^2\big)$$
$$\leq e^{-{1\over 2}\,\sum_{i=1}^n H_i(p_{\theta_1,i},p_{\theta_2,i})^2}= e^{-{1\over 2}\,n\,d_n^0(\theta_1,\theta_2)^2},$$
which implies that the metric $d_n^0$ satisfies the inequality (1) and hence by the convexity of $(d_n^0)^2$ one can apply Proposition \ref{prop:2} and Proposition  \ref{prop:0} for  $d_n^0$.
Now we are ready to present two results for i.n.i.d. observations by means of $\overline{W}_n(\theta_0,\varepsilon)$ and $d_n^0$.
\bigskip

\noindent{\it 3.1.1 Pseudoposterior Convergence Rate.}  Given $0<\beta< 1$, we define a pseudoposterior distribution $\Pi_{\beta,n}$ based on the prior $\Pi_n$ by
$$\Pi_{\beta,n}\bigl( B\,\big|\,X_1,X_2,\dots,X_n\bigr) ={\int_B\prod\limits_{i=1}^np_{\theta,i}(X_i)^\beta\, \Pi_n(d\theta) \over \int_{\Theta}\prod\limits_{i=1}^np_{\theta,i}(X_i)^\beta\, \Pi_n(d\theta)}\qquad{\rm for\ each}\quad B\subset \Theta.$$
In other words, we use the data-dependent prior
$\Pi_n(d\theta)\big/\prod\limits_{i=1}^np_{\theta,i}(X_i)^{1-\beta}$.
Wasserman \cite{wa1} first applied  psuedolikelihood function-data-dependent priors in study of asymptotic inference for mixture models. The pseudoposterior $\Pi_{\beta,n}$ for i.i.d. observations was introduced by Walker and Hjort \cite{wah} who proved a Hellinger consistency theorem when $\beta=1/2$. The Hellinger consistency theorem for any $0<\beta<1$ was obtained by Xing and Ranneby \cite{xir2}. Here we study the convergence rates of the pseudoposteriors for i.n.i.d. observations. Using Proposition \ref{prop:0} for  $d_n^0$, we obtain

\begin{prop} \label{prop:3} The inequality
$$P_{\theta_0}^{(n)}\,\Big(\int_{\theta\in \Theta_1:\,d_n^0(\theta,\theta_0)> \varepsilon}\Bigl(\prod\limits_{i=1}^n{p_{\theta,i}(X_i)\over p_{\theta_0,i}(X_i)}\Bigr)^\beta\, \Pi_n(d\theta)\Big)^\alpha$$
$$\leq e^{-\bigl((1-\beta)\wedge \beta\bigr)\alpha n \varepsilon^2}\Pi_n(\theta\in \Theta_1:\,d_n^0(\theta,\theta_0)> \varepsilon)^\alpha$$
holds for all $n$, $0<\alpha<1$, $0<\beta<1$, $\varepsilon>0$ and $\Theta_1\subset\Theta$.
\end{prop}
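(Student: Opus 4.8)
The plan is to deduce Proposition \ref{prop:3} from Proposition \ref{prop:0}, by checking that in the present i.n.i.d.\ setting the metric $d_n^0$ is an admissible choice of the abstract metric $d_n^1$. Since $R_\theta^{(n)}(X^{(n)})=p_\theta^{(n)}(X^{(n)})/p_{\theta_0}^{(n)}(X^{(n)})=\prod_{i=1}^n p_{\theta,i}(X_i)/p_{\theta_0,i}(X_i)$, the left-hand side of the asserted inequality equals
$$P_{\theta_0}^{(n)}\,\Big(\int_{\theta\in \Theta_1:\,d_n^0(\theta,\theta_0)> \varepsilon}R_\theta^{(n)}(X^{(n)})^\beta\, \Pi_n(d\theta)\Big)^\alpha,$$
which is exactly the quantity bounded in Proposition \ref{prop:0} after replacing $d_n^1$ by $d_n^0$, and the claimed right-hand side likewise matches term by term. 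So it suffices to verify the two defining properties of $d_n^1$ for $d_n^0$: convexity of $d_n^0(\theta,\theta_0)^s$ in $\theta$ for some fixed $s>0$, and inequality (1).

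For the convexity take $s=2$: since $H_i(p_{\theta,i},p_{\theta_0,i})^2=2-2\int\sqrt{p_{\theta,i}\,p_{\theta_0,i}}\,d\mu_i$ and $p\mapsto\sqrt{p}$ is concave, each $p_{\theta,i}\mapsto H_i(p_{\theta,i},p_{\theta_0,i})^2$ is convex, hence so is the average $d_n^0(\theta,\theta_0)^2={1\over n}\sum_{i=1}^nH_i(p_{\theta,i},p_{\theta_0,i})^2$ (convexity in $\theta$ being understood, as elsewhere in the paper, relative to a convex structure on $\Theta$ under which $\theta\mapsto p_{\theta,i}$ is affine). For inequality (1), invoke the product identity already recorded before the proposition:
$$1-{1\over 2}\,H\Big(\prod_{i=1}^np_{\theta_1,i},\prod_{i=1}^np_{\theta_2,i}\Big)^2=\prod_{i=1}^n\big(1-{1\over 2}\,H_i(p_{\theta_1,i},p_{\theta_2,i})^2\big)\leq e^{-{1\over 2}\sum_{i=1}^nH_i(p_{\theta_1,i},p_{\theta_2,i})^2}=e^{-{1\over 2}n\,d_n^0(\theta_1,\theta_2)^2},$$
using $1-x\leq e^{-x}$ termwise; taking logarithms and rearranging gives $d_n^0(\theta_1,\theta_2)^2\leq-{2\over n}\log\big(1-{1\over 2}H(p_{\theta_1}^{(n)},p_{\theta_2}^{(n)})^2\big)$, which is precisely (1).

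With both properties in hand, $d_n^0$ plays the role of $d_n^1$, and Proposition \ref{prop:0} applies verbatim with $d_n^1:=d_n^0$, delivering the stated bound for every $n$, every $0<\alpha<1$, $0<\beta<1$, $\varepsilon>0$ and $\Theta_1\subset\Theta$. I do not expect any genuine obstacle: the whole point is the observation that the product structure of i.n.i.d.\ densities makes $d_n^0$ an instance of $d_n^1$. The only delicate spot is fixing the convex structure on $\Theta$ consistently with the hypotheses of Propositions \ref{prop:2} and \ref{prop:0}, so that convexity of $d_n^0(\cdot,\theta_0)^2$ is meaningful; if the proof of Proposition \ref{prop:0} in fact relies only on inequality (1) together with Jensen's inequality and Fubini applied to the moments $P_{\theta_0}^{(n)}R_\theta^{(n)}(X^{(n)})^{\alpha\beta}$, then even this check is superfluous and Proposition \ref{prop:3} is a pure specialization of Proposition \ref{prop:0}.
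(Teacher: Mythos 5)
Your proof is correct and follows essentially the same route as the paper: the paper derives Proposition \ref{prop:3} from Proposition \ref{prop:0} by observing, in the paragraph just before Section 3.1.1, that $d_n^0$ satisfies inequality (1) via the product identity $1-\tfrac{1}{2}H(\prod p_{\theta_1,i},\prod p_{\theta_2,i})^2=\prod(1-\tfrac{1}{2}H_i^2)\leq e^{-\tfrac{1}{2}n\,d_n^0(\theta_1,\theta_2)^2}$ and that $(d_n^0)^2$ is convex, so $d_n^0$ qualifies as an instance of $d_n^1$. Your closing remark is also accurate: an inspection of the proof of Proposition \ref{prop:0} shows it uses only inequality (1) (via $P_{\theta_0}^{(n)}R_\theta^{(n)}(X^{(n)})^{1/2}=1-\tfrac{1}{2}H(p_{\theta_0}^{(n)},p_\theta^{(n)})^2\leq e^{-\tfrac{1}{2}n\varepsilon^2}$ on the set $d_n^1(\theta,\theta_0)>\varepsilon$) together with H\"older and Fubini, so the convexity of $(d_n^0)^2$ is in fact only needed for Proposition \ref{prop:2}, not for Proposition \ref{prop:0} or \ref{prop:3}.
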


Therefore, we have

\begin{thm}\label{thm:4.1}
Let $0<\beta<1$. Suppose that $\varepsilon_n>0$, $n\,\varepsilon_n^2\geq c_0\,\log n$ for all large $n$ and some fixed constant $c_0>0$. Suppose that there exists $c_1>0$ such that
$$\Pi_n(\theta\in \Theta:\,d_n^0(\theta,\theta_0)> \varepsilon_n)\leq e^{c_1n\varepsilon_n^2}\, \Pi_n\bigl( \overline{W}_n(\theta_0,\varepsilon_n) \bigr)  $$
for all large $n$. Then for each large $r$,
$$\Pi_{\beta,n}\bigl(\theta\in \Theta:\,d_n^0(\theta,\theta_0)\geq r\,\varepsilon_n|\,X_1,X_2,\dots,X_n\bigr)\longrightarrow 0$$
almost surely as $n\to\infty$.
\end{thm}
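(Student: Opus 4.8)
The plan is to control the numerator and the denominator of the pseudoposterior mass of $\{\theta:\,d_n^0(\theta,\theta_0)\geq r\varepsilon_n\}$ separately. After cancelling the $\theta$-free factor $\prod_{i=1}^np_{\theta_0,i}(X_i)^\beta$ in the definition of $\Pi_{\beta,n}$, this mass equals $N_n/D_n$, where I will use $N_n=\int_{\{d_n^0(\theta,\theta_0)>r\varepsilon_n/2\}}R_\theta^{(n)}(X^{(n)})^\beta\,\Pi_n(d\theta)$ as an upper estimate for the numerator (legitimate since $\{d_n^0\geq r\varepsilon_n\}\subseteq\{d_n^0>r\varepsilon_n/2\}$) and $D_n=\int_\Theta R_\theta^{(n)}(X^{(n)})^\beta\,\Pi_n(d\theta)$. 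The target is to show that, almost surely for all large $n$, $N_n\leq e^{-t_1n\varepsilon_n^2}\,\Pi_n\bigl(d_n^0(\theta,\theta_0)>\varepsilon_n\bigr)$ and $D_n\geq e^{-t_2n\varepsilon_n^2}\,\Pi_n\bigl(\overline{W}_n(\theta_0,\varepsilon_n)\bigr)$, with $t_2$ depending only on $\beta,c_0$ and $t_1$ of order $((1-\beta)\wedge\beta)r^2$. Once these are in hand, the hypothesis $\Pi_n(d_n^0(\theta,\theta_0)>\varepsilon_n)\leq e^{c_1n\varepsilon_n^2}\Pi_n(\overline{W}_n(\theta_0,\varepsilon_n))$ gives $N_n/D_n\leq e^{-(t_1-t_2-c_1)n\varepsilon_n^2}$, and taking $r$ large enough that $t_1>t_2+c_1$ forces the right side to $0$, since $n\varepsilon_n^2\geq c_0\log n\to\infty$.

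For the numerator I would invoke Proposition \ref{prop:3} with $\Theta_1=\Theta$, $\varepsilon=r\varepsilon_n/2$ (for $r>2$) and any fixed $\alpha\in(0,1)$; combined with $\Pi_n(d_n^0>r\varepsilon_n/2)\leq\Pi_n(d_n^0>\varepsilon_n)$ this yields $P_{\theta_0}^{(n)}N_n^\alpha\leq e^{-((1-\beta)\wedge\beta)\alpha(r/2)^2n\varepsilon_n^2}\Pi_n(d_n^0>\varepsilon_n)^\alpha$. Markov's inequality then gives $P_{\theta_0}^{(n)}\bigl(N_n>e^{-t_1n\varepsilon_n^2}\Pi_n(d_n^0>\varepsilon_n)\bigr)\leq e^{-\alpha(((1-\beta)\wedge\beta)(r/2)^2-t_1)n\varepsilon_n^2}$, so with $t_1=\tfrac12((1-\beta)\wedge\beta)(r/2)^2$ the bound is at most $n^{-\alpha t_1c_0}$, summable once $r$ is large, and Borel--Cantelli delivers the eventual bound on $N_n$. (If $\Pi_n(d_n^0>\varepsilon_n)=0$ then $N_n=0$ and there is nothing to prove.)

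The denominator bound is the crux, and the place where the modified Hellinger quantity $H_*$ is essential. I would first restrict the integral, $D_n\geq D_n':=\int_{\overline{W}_n(\theta_0,\varepsilon_n)}R_\theta^{(n)}(X^{(n)})^\beta\,\Pi_n(d\theta)$, and then apply Jensen's inequality to the convex map $x\mapsto x^{-1/(2\beta)}$ with the normalized measure $\Pi_n(\cdot\cap\overline{W}_n)/\Pi_n(\overline{W}_n)$, followed by Tonelli, to get $P_{\theta_0}^{(n)}\bigl((D_n')^{-1/(2\beta)}\bigr)\leq\Pi_n(\overline{W}_n)^{-1/(2\beta)}\sup_{\theta\in\overline{W}_n}P_{\theta_0}^{(n)}\bigl(R_\theta^{(n)}(X^{(n)})^{-1/2}\bigr)$. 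The exponent $-1/(2\beta)$ is picked precisely so that the remaining expectation factorizes, by independence of the $X_i$ under $P_{\theta_0}^{(n)}$, into $\prod_{i=1}^n\int p_{\theta_0,i}^{3/2}p_{\theta,i}^{-1/2}\,d\mu_i=\prod_{i=1}^n\bigl(1+\tfrac32H_{*,i}(p_{\theta_0,i},p_{\theta,i})^2\bigr)\leq\exp\bigl(\tfrac32\sum_{i=1}^nH_{*,i}(p_{\theta_0,i},p_{\theta,i})^2\bigr)\leq e^{\frac32n\varepsilon_n^2}$ whenever $\theta\in\overline{W}_n(\theta_0,\varepsilon_n)$, where the identity $\int f^{3/2}g^{-1/2}d\mu=1+\tfrac32H_*(f,g)^2$ is the same computation underlying the definition of $W_n$ and the product formula preceding $\overline{W}_n$. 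Hence $P_{\theta_0}^{(n)}\bigl((D_n')^{-1/(2\beta)}\bigr)\leq\Pi_n(\overline{W}_n)^{-1/(2\beta)}e^{\frac32n\varepsilon_n^2}$, and Markov gives $P_{\theta_0}^{(n)}\bigl(D_n'<e^{-t_2n\varepsilon_n^2}\Pi_n(\overline{W}_n)\bigr)\leq e^{-(t_2/(2\beta)-3/2)n\varepsilon_n^2}$; choosing $t_2>3\beta$ large enough that $(t_2/(2\beta)-3/2)c_0>1$ makes this summable, and Borel--Cantelli gives the eventual lower bound on $D_n$. (When $\Pi_n(\overline{W}_n)=0$ the hypothesis forces $\Pi_n(d_n^0>\varepsilon_n)=0$, the trivial case.)

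Combining the two estimates as in the first paragraph and choosing $r$ large enough to meet both the numerator summability requirement and $t_1=\tfrac12((1-\beta)\wedge\beta)(r/2)^2>t_2+c_1$ would give $\Pi_{\beta,n}(d_n^0(\theta,\theta_0)\geq r\varepsilon_n\,|\,X_1,\dots,X_n)\leq e^{-(t_1-t_2-c_1)n\varepsilon_n^2}\to0$ almost surely, for every such $r$. The one step I expect to be delicate is the denominator estimate: beyond the negative-moment/$H_*$ calibration, one must note that $\theta\in\overline{W}_n$ already makes $\int p_{\theta_0,i}^{3/2}p_{\theta,i}^{-1/2}d\mu_i$ finite, so that $R_\theta^{(n)}>0$ holds $P_{\theta_0}^{(n)}$-a.s.\ and the negative moment is genuinely finite; everything else is Markov's inequality plus Borel--Cantelli bookkeeping, with the constants $\alpha\in(0,1)$, $t_1$, $t_2$, $r$ adjusted to the summability coming from $n\varepsilon_n^2\geq c_0\log n$.
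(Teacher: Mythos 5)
Your proposal is correct and follows essentially the paper's route. The paper's proof of Theorem \ref{thm:4.1} is stated to be a replay of the proof of Theorem \ref{thm:1} with Lemma \ref{lem:4} in place of Lemma \ref{lem:2}; your denominator step (restrict to $\overline{W}_n$, apply Jensen with $x\mapsto x^{-1/(2\beta)}$ so that the $\beta$ cancels to a $-1/2$ power, factorize by independence, invoke $\int p_{\theta_0,i}^{3/2}p_{\theta,i}^{-1/2}\,d\mu_i=1+\tfrac32 H_{*,i}^2$ and the product identity, then Chebyshev/Markov plus Borel--Cantelli) is exactly the content and proof of Lemma \ref{lem:4}, and your numerator step via Proposition \ref{prop:3} with $\Theta_1=\Theta$ and Markov plus Borel--Cantelli is the pseudoposterior analogue of the Lemma \ref{lem:3} step in the proof of Theorem \ref{thm:1}. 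The only organizational difference is that you run Borel--Cantelli separately on $N_n$ and $D_n$ and then divide, whereas the paper gets the a.s. lower bound on the denominator first and then bounds the $\alpha$-th moment of the posterior mass directly; these are equivalent and yours is arguably a bit cleaner bookkeeping. Minor points worth making explicit in a final write-up: the cancellation of the $\theta$-free factor $\prod_i p_{\theta_0,i}(X_i)^\beta$ requires $p_{\theta_0}^{(n)}(X^{(n)})>0$ $P_{\theta_0}^{(n)}$-a.s., which holds; and when quoting Proposition \ref{prop:3} you should note you are applying it with $\varepsilon=r\varepsilon_n/2$ for a single sufficiently large $r$ (not a dyadic decomposition as in Lemma \ref{lem:3}), which is fine here precisely because the hypothesis of Theorem \ref{thm:4.1} already gives you the coarse prior-mass comparison without any entropy condition.
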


Since the total mass of $\Pi_n$ is always equal to one, Theorem \ref{thm:4.1} implies that the convergence rate $\varepsilon_n$ of the pseudoposterior distribution $\Pi_{\beta,n}$ can be completely determined  by the concentration condition $\Pi_n\bigl( \overline{W}_n(\theta_0,\varepsilon_n) \bigr)\geq e^{c_1n\varepsilon_n^2}.$ In other words, the convergence rate does not depend on the rate of the metric entropy which describes how large the model is.

\bigskip

\noindent{\it 3.1.2 Posterior Convergence Rate.} By a result of Birg$\acute{{\rm e}}$ (see \cite{lec1}, page 491, or \cite{ghv1}, Lemma 2) we know that there exist tests satisfying Assumption \ref{ass:1}. Based on this fact, Ghosal and van der Vaart (\cite{ghv1}, Theorem 4) gave an in-probability convergence rate theorem for i.n.i.d. observations and the metric $d_n^0$. Now, together with Proposition \ref{prop:2} and $\overline{W}_n(\theta_0,\varepsilon)\subset W_n(\theta_0,\varepsilon)$, Theorem \ref{thm:1} implies the following almost sure assertion.

\begin{thm}\label{thm:4} Let $0<\delta<1/2$ and $0<\alpha<1$.
Suppose that $\varepsilon_n>0$, $n\,\varepsilon_n^2\geq c_0\,\log n$ for all large $n$ and some fixed constant $c_0>0$. Suppose that there exist $c_1< {1\over 2} (1-\alpha)(1-2\delta)^2$, $c_2>{1\over c_0}$ and a sequence of subsets $\Theta_n$ on $\Theta$ such that
$$C(\delta j\varepsilon_n,\{\theta\in\Theta_n: j\varepsilon_n < d_n^0(\theta,\theta_0)\leq 2j\varepsilon_n\},\alpha,d_n^0)\leq e^{c_1j^2n\varepsilon_n^2}\, \Pi_n\bigl( \overline{W}_n(\theta_0,\varepsilon_n) \bigr)^{\alpha}  $$
for all large  $j$, $n$, and
$$\sum\limits_{n=1}^\infty{e^{n\,\varepsilon_n^2\,(3+2c_2)}\, \Pi_n(\Theta\setminus \Theta_n)\over \Pi_n\bigl( \overline{W}_n(\theta_0,\varepsilon_n) \bigr)}<\infty.$$
\smallskip
\noindent Then there exists $b>0$ such that for each large $r$ and all large $n$,
$$\Pi_n\bigl(\theta\in \Theta:\,d_n^0(\theta,\theta_0)\geq r\,\varepsilon_n|X^{(n)}\bigr)\leq e^{-bn\varepsilon_n^2}\qquad {\rm almost\ surely}.$$
\end{thm}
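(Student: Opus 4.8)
The plan is to deduce Theorem \ref{thm:4} from Theorem \ref{thm:1} by checking the hypotheses of the latter for the choice $d_n=e_n=d_n^0$ and with $\Theta_n$ the given sieve. The first step is to supply the integration condition. As noted just before the statement, $d_n^0$ satisfies inequality (1) and $(d_n^0)^2$ is a convex function of $\theta$, so Proposition \ref{prop:2} applies with $d_n^1=d_n^0$ and produces, for all $n$, $\varepsilon>0$ and $\Theta_1\subset\Theta$, a bound of exactly the form of Assumption \ref{ass:2} with $K_1=2$, $K_2=\frac12(1-\alpha)(1-2\delta)^2$ and $K_3=1$, the sole difference being that the Hausdorff constant on the right-hand side is evaluated at the contracted radius $\delta\varepsilon$ rather than at $\varepsilon$. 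Feeding this variant into the proof of Theorem \ref{thm:1} in place of Assumption \ref{ass:2} changes nothing except that the radius in the entropy condition (2) contracts correspondingly from $j\varepsilon_n$ to $\delta j\varepsilon_n$, which is precisely the form in which the first displayed hypothesis of Theorem \ref{thm:4} appears. (One may also read Theorem \ref{thm:4} as the sieved analogue of Corollary \ref{cor:0}, the tail set $\Theta\setminus\Theta_n$ being absorbed by the summability device already present in Theorem \ref{thm:1}.)

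It then remains to verify that the two running hypotheses of this contracted version of Theorem \ref{thm:1} follow from those of Theorem \ref{thm:4}. The growth condition $n\varepsilon_n^2\geq c_0\log n$ and the constraint $c_1<K_2=\frac12(1-\alpha)(1-2\delta)^2$ are assumed verbatim. For the remaining two I would invoke the inclusion $\overline{W}_n(\theta_0,\varepsilon_n)\subset W_n(\theta_0,\varepsilon_n)$ recorded before the statement, so that $\Pi_n(\overline{W}_n(\theta_0,\varepsilon_n))\leq\Pi_n(W_n(\theta_0,\varepsilon_n))$. Replacing $\overline{W}_n$ by the larger $W_n$ on the right of the first hypothesis of Theorem \ref{thm:4} only weakens it, so the contracted condition (2) holds with $W_n$; and replacing $\overline{W}_n$ by $W_n$ in the denominator of the summability hypothesis only shrinks the summand, so $\sum_n e^{n\varepsilon_n^2(3+2c_2)}\,\Pi_n(\Theta\setminus\Theta_n)\big/\Pi_n(W_n(\theta_0,\varepsilon_n))<\infty$ with $c_2>1/c_0$ as assumed. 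Theorem \ref{thm:1} then applies and yields a constant $b>0$ such that, for each large $r$ and all large $n$, $\Pi_n(\theta\in\Theta:\,d_n^0(\theta,\theta_0)\geq r\varepsilon_n\mid X^{(n)})\leq e^{-bn\varepsilon_n^2}$ almost surely.

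The only point demanding genuine attention is the bookkeeping of the contracted radius: one must confirm that Proposition \ref{prop:2} indeed gives Assumption \ref{ass:2} with the Hausdorff constant at radius $\delta\varepsilon$, and that the Appendix proof of Theorem \ref{thm:1} is robust to this substitution, so that its conclusion remains valid when condition (2) is stated at radius $\delta j\varepsilon_n$. Beyond that, the argument is a routine matching of hypotheses, and it introduces no probabilistic content beyond Proposition \ref{prop:2} and Theorem \ref{thm:1}.
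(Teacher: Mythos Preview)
Your proposal is correct and matches the paper's own argument essentially verbatim: the paper derives Theorem \ref{thm:4} by combining Proposition \ref{prop:2} (applied to $d_n^1=d_n^0$) with the inclusion $\overline{W}_n(\theta_0,\varepsilon)\subset W_n(\theta_0,\varepsilon)$ and then invoking Theorem \ref{thm:1}, exactly as you describe. Your observation about the contracted radius $\delta j\varepsilon_n$ is also on target and is handled the same way in the paper (cf.\ Corollary \ref{cor:0}); the proof of Theorem \ref{thm:1} via Lemma \ref{lem:3} carries through unchanged when the Hausdorff constant is evaluated at $\delta\varepsilon$ rather than $\varepsilon$.
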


For readers' convenience, we here copy a direct consequence of Theorem \ref{thm:4} for $\alpha=1/2$.

\begin{cor}\label{cor:3}
Let $0<\delta<1/2$. Suppose that $\varepsilon_n>0$, $n\,\varepsilon_n^2\geq c_0\,\log n$ for all large $n$ and some fixed constant $c_0>0$. Suppose that there exist $c_1,\ c_2,\ c_3$ with $c_1+c_2<{1\over 2}(1-2\delta)^2$ and $c_3>1/c_0$ and a sequence of subsets $\Theta_n$ on $\Theta$ such that for all large $j$  and  $n,$
$$\leqno\quad (i)\quad N\big(\delta j\varepsilon_n,\{\theta\in\Theta_n: j\varepsilon_n < d_n^0(\theta,\theta_0)\leq 2j\varepsilon_n\},d_n^0\big)\leq e^{c_1j^2n\varepsilon_n^2};$$
$$\leqno\quad (ii)\quad \Pi_n\big(\theta\in\Theta_n: j\varepsilon_n < d_n^0(\theta,\theta_0)\leq 2j\varepsilon_n\big)\leq e^{c_2j^2n\varepsilon_n^2}\, \Pi_n\bigl( \overline{W}_n(\theta_0,\varepsilon_n) \bigr);$$
$$\leqno\quad (iii)\quad \sum\limits_{n=1}^\infty{e^{n\,\varepsilon_n^2\,(3+2c_3)}\, \Pi_n(\Theta\setminus \Theta_n)\over \Pi_n\bigl( \overline{W}_n(\theta_0,\varepsilon_n) \bigr)}<\infty.$$
\smallskip
\noindent Then there exists $b>0$ such that for each large $r$ and all large $n$,
$$\Pi_n\bigl(\theta\in \Theta:\,d_n^0(\theta,\theta_0)\geq r\,\varepsilon_n|X^{(n)}\bigr)\leq e^{-bn\varepsilon_n^2}\qquad {\rm almost\ surely}.$$
\end{cor}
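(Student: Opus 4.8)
The plan is to deduce Corollary~\ref{cor:3} directly from Theorem~\ref{thm:4} with the choice $\alpha=\tfrac12$, the only real work being to replace the Hausdorff $\alpha$-constant appearing in the hypothesis of Theorem~\ref{thm:4} by the product of a covering number and a prior mass. Recall that Theorem~\ref{thm:4} rests on Proposition~\ref{prop:2}, so the exponent occurring there is $K_3=1$, and the instance of the sandwich inequality quoted after the definition of the Hausdorff $\alpha$-constant that I would use is
$$C(\delta,\Theta_1,\tfrac12,e_n)\leq \Pi_n(\Theta_1)^{1/2}\,N(\delta,\Theta_1,e_n)^{1/2}$$
valid for every $\Theta_1\subset\Theta$.

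First I would fix large integers $j$ and $n$ and apply this inequality with $e_n=d_n^0$, radius $\delta j\varepsilon_n$, and $\Theta_1=\Theta_{n,j}:=\{\theta\in\Theta_n:\ j\varepsilon_n<d_n^0(\theta,\theta_0)\leq 2j\varepsilon_n\}$, obtaining
$$C(\delta j\varepsilon_n,\Theta_{n,j},\tfrac12,d_n^0)\leq N(\delta j\varepsilon_n,\Theta_{n,j},d_n^0)^{1/2}\,\Pi_n(\Theta_{n,j})^{1/2}.$$
Taking square roots in hypotheses (i) and (ii) of the corollary gives $N(\delta j\varepsilon_n,\Theta_{n,j},d_n^0)^{1/2}\leq e^{\frac{c_1}{2}j^2 n\varepsilon_n^2}$ and $\Pi_n(\Theta_{n,j})^{1/2}\leq e^{\frac{c_2}{2}j^2 n\varepsilon_n^2}\,\Pi_n(\overline{W}_n(\theta_0,\varepsilon_n))^{1/2}$, whence
$$C(\delta j\varepsilon_n,\Theta_{n,j},\tfrac12,d_n^0)\leq e^{\frac{c_1+c_2}{2}j^2 n\varepsilon_n^2}\,\Pi_n\bigl(\overline{W}_n(\theta_0,\varepsilon_n)\bigr)^{1/2}.$$

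Then I would check that the constants fit Theorem~\ref{thm:4} with $\alpha=\tfrac12$. The displayed bound is precisely the entropy hypothesis of Theorem~\ref{thm:4}, with the role of its constant ``$c_1$'' played by $\frac{c_1+c_2}{2}$; since the corollary assumes $c_1+c_2<\tfrac12(1-2\delta)^2$, we indeed have $\frac{c_1+c_2}{2}<\tfrac14(1-2\delta)^2=\tfrac12\bigl(1-\tfrac12\bigr)(1-2\delta)^2$, as required. Hypothesis (iii) of the corollary is literally the summability condition of Theorem~\ref{thm:4} with its ``$c_2$'' taken to be $c_3>1/c_0$. Theorem~\ref{thm:4} then applies and yields the asserted bound $\Pi_n(\theta\in\Theta:\ d_n^0(\theta,\theta_0)\geq r\varepsilon_n\,|\,X^{(n)})\leq e^{-bn\varepsilon_n^2}$ almost surely, for all large $r$ and $n$.

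Since the whole argument is bookkeeping on top of Theorem~\ref{thm:4}, there is no genuine obstacle; the only points requiring a little care are remembering that $K_3=1$ in the setting inherited from Proposition~\ref{prop:2}, so that the power $1-\alpha=\tfrac12$ on the covering number is exactly what hypothesis (i) supplies, and tracking the factor $\tfrac12$ when passing from (i) and (ii) to the combined bound and then matching it against the threshold $\tfrac12(1-\alpha)(1-2\delta)^2$ of Theorem~\ref{thm:4}.
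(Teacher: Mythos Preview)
Your proposal is correct and matches the paper's own approach: the paper states Corollary~\ref{cor:3} as ``a direct consequence of Theorem~\ref{thm:4} for $\alpha=1/2$,'' and the bridge between (i)--(ii) and the Hausdorff constant hypothesis is exactly the sandwich inequality $C(\delta,\Theta_1,\alpha,e_n)\leq \Pi_n(\Theta_1)^\alpha N(\delta,\Theta_1,e_n)^{1-\alpha}$ that the paper invokes (after Corollary~\ref{cor:0}) when deriving Corollary~\ref{cor:1} from Theorem~\ref{thm:1}. Your bookkeeping on the constants is accurate.
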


\subsection{Markov chains}
 Let $X_0,X_1,\dots$ be a Markov chain with transition density $p_\theta(y|x)$ and initial density $q_\theta(x_0)$ with respect to some $\sigma$-finite measure $\mu$ on a measurable space $(\mathfrak{X}, {\cal A}).$ Here we assume that for each $\theta\in \Theta$ the 2-variable function $(x,y)\mapsto p_\theta(y|x)$ is measurable. So the joint distribution $P_{\theta}^{(n)}$ of $X_0,X_1,\dots,X_n$ has a density given by $p_\theta^{(n)}(x^{(n)})=q_\theta(x_0)\prod\limits_{i=1}^n p_\theta(x_i|x_{i-1})$ relative to the product measure $\mu(x_0)\mu(x_1)\dots\mu(x_n)$. We shall adopt the following Hellinger type semimetrics.
$$H\big(p_{\theta_1}(y|x),p_{\theta_2}(y|x)\big)=\Big(\int_{\mathfrak{X}}\int_{\mathfrak{X}}\big(\sqrt{p_{\theta_1}(y|x)}
-\sqrt{p_{\theta_2}(y|x)}\,\big)^2 \,d\mu(y)d\nu(x)\Big)^{1/2},$$
$$H\big(q_{\theta_1}(x),q_{\theta_2}(x)\big)=\Big(\int_{\mathfrak{X}}\big(\sqrt{q_{\theta_1}(x)}
-\sqrt{q_{\theta_2}(x)}\,\big)^2 \,d\mu(x)\Big)^{1/2},$$
$$H_*\big(p_{\theta_1}(y|x),p_{\theta_2}(y|x)\big)$$$$=\Big(\int_{\mathfrak{X}}\int_{\mathfrak{X}}\big(\sqrt{p_{\theta_1}(y|x)}
-\sqrt{p_{\theta_2}(y|x)}\,\big)^2\big({2\over 3}\sqrt{p_{\theta_1}(y|x)\over p_{\theta_2}(y|x)} +{1\over 3}\big) \,d\mu(y)d\nu(x)\Big)^{1/2},$$
$$H_*\big(q_{\theta_1}(x),q_{\theta_2}(x)\big)=\Big(\int_{\mathfrak{X}}\big(\sqrt{q_{\theta_1}(x)}
-\sqrt{q_{\theta_2}(x)}\,\big)^2\big({2\over 3}\sqrt{q_{\theta_1}(x)\over q_{\theta_2}(x)} +{1\over 3}\big) \,d\mu(x)\Big)^{1/2}.$$
Denote
$${W}^1_n(\theta_0,\varepsilon)=\big\{\theta\in \Theta:\, H_*(p_{\theta_0},p_{\theta})^2+{1\over n}H_*(q_{\theta_0},q_{\theta})^2\leq \varepsilon^2\big\}.$$
By means of the metric $d(\theta,\theta_0):=H(p_{\theta},p_{\theta_0})$, Ghosal and van der Vaart (\cite{ghv1}, Theorem 5) gave an in-probability posterior convergence rate theorem for stationary $\alpha$-mixing Markov chains. Since calculation of the $\alpha$-mixing coefficients is generally not easy and many processes are neither mixing nor stationary, it seems worth to develop a posterior convergence rate theorem for Markov chains which may be neither stationary nor $\alpha$-mixing. Now we have an almost sure assertion in this direction. Our result is based on the following proposition.

\begin{prop}\label{prop:4} Suppose that there exist a $\mu$-integrable function $r(y)$ and  constants $a_1\geq  a_0>0$  with $a_1\geq  1$ such that $d\nu(y)=r(y)d\mu(y)$ and
$a_0r(y)\leq p_{\theta}(y|x)\leq  a_1 r(y)$
for all $\theta\in\Theta$ and $x,y\in \mathfrak{X}$. Let $0<\delta<{\sqrt{a_0}\over 2\sqrt{a_1}}$ and $0<\alpha<{1\over 2}$.  Then
the inequality
$$P_{\theta_0}^{(n)}\,\Big(\int_{\theta\in \Theta_1:\,d(\theta,\theta_0)> \varepsilon} {q_\theta(X_0)\over q_{\theta_0}(X_0)}\prod\limits_{i=1}^n{p_\theta(X_i|X_{i-1})\over p_{\theta_0}(X_i|X_{i-1})}\, \Pi_n(d\theta)\Big)^\alpha$$
$$\leq 2\,e^{-({1\over 2}-\alpha)({\sqrt{a_0}\over 2}-\sqrt{a_1}\delta)^2n \varepsilon^2}C(\delta\,\varepsilon,\{\theta\in \Theta_1:\,d(\theta,\theta_0)> \varepsilon\},\alpha,d)$$
holds for all $n$,  $\varepsilon>0$ and $\Theta_1\subset\Theta$, where $d(\theta,\theta_0)=H(p_{\theta},p_{\theta_0})$.
\end{prop}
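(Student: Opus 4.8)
The plan is to follow the scheme of Proposition~\ref{prop:2}: reduce by a covering to a single small ball, estimate over that ball, and reassemble by subadditivity of \(t\mapsto t^{\alpha}\). Fix \(\varepsilon>0\), put \(A=\{\theta\in\Theta_1:\,d(\theta,\theta_0)>\varepsilon\}\), and cover \(A\) by \(d\)-balls \(B_1,B_2,\dots\) of radius \(\le\delta\varepsilon\) that nearly attain \(C(\delta\varepsilon,A,\alpha,d)=\sum_j\Pi_n(B_j)^{\alpha}\). Since \(0<\alpha<1\), \(\bigl(\int_A R^{(n)}_\theta\,\Pi_n(d\theta)\bigr)^{\alpha}\le\sum_j\bigl(\int_{B_j}R^{(n)}_\theta\,\Pi_n(d\theta)\bigr)^{\alpha}\), so it suffices to show, for each ball \(B\) of \(d\)-radius \(\le\delta\varepsilon\) meeting \(A\),
\[
P^{(n)}_{\theta_0}\Bigl(\int_{B}R^{(n)}_\theta\,\Pi_n(d\theta)\Bigr)^{\alpha}\le 2\,e^{-(\frac12-\alpha)(\frac{\sqrt{a_0}}{2}-\sqrt{a_1}\delta)^2 n\varepsilon^2}\,\Pi_n(B)^{\alpha}.
\]
Writing \(\bar p^{(n)}_B=\Pi_n(B)^{-1}\int_B p^{(n)}_\theta\,\Pi_n(d\theta)\) for the mixed chain density over \(B\), a direct computation gives the identity \(P^{(n)}_{\theta_0}\bigl(\int_{B}R^{(n)}_\theta\,\Pi_n(d\theta)\bigr)^{\alpha}=\Pi_n(B)^{\alpha}\int(\bar p^{(n)}_B)^{\alpha}(p^{(n)}_{\theta_0})^{1-\alpha}\,d\mu^{(n)}\), so everything comes down to the decay of the \(\alpha\)-affinity between \(\bar p^{(n)}_B\) and \(p^{(n)}_{\theta_0}\); on \(B\) every \(\theta\) satisfies \(d(\theta,\theta_0)>\varepsilon\) and \(d(\theta,\theta^{\ast})\le 2\delta\varepsilon\) for the centre \(\theta^{\ast}\).

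This affinity estimate is the heart of the argument, and it is where the bound \(a_0r\le p_\theta(\cdot\mid x)\le a_1r\) is used. Since \(p^{(n)}_\theta(x^{(n)})=q_\theta(x_0)\prod_{i=1}^n p_\theta(x_i\mid x_{i-1})\), the chain affinities are \emph{nested}, not product, integrals; integrating out \(x_n,x_{n-1},\dots\) successively along the transition kernels produces the conditional affinities \(g_\theta(x)=1-\frac12 H\bigl(p_\theta(\cdot\mid x),p_{\theta_0}(\cdot\mid x)\bigr)^2\) (Hellinger in \(y\) with respect to \(\mu\)) evaluated along the \(p_{\theta_0}\)-chain. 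The lower bound \(p_{\theta_0}(\cdot\mid x)\ge a_0r\) is a uniform Doeblin minorisation: at every step the \(p_{\theta_0}\)-chain redraws from \(\nu/\nu(\mathfrak{X})\) with probability at least \(a_0\nu(\mathfrak{X})\), so its invariant measure \(\pi\) dominates \(a_0\nu\). Consequently \(\prod_i g_\theta(X_i)\le\exp\bigl(-\frac12\sum_i H(p_\theta(\cdot\mid X_i),p_{\theta_0}(\cdot\mid X_i))^2\bigr)\) decays, by the mixing the minorisation forces, at the stationary rate \(\exp\bigl(-\frac n2\!\int H(p_\theta(\cdot\mid x),p_{\theta_0}(\cdot\mid x))^2\,d\pi(x)\bigr)\le\exp\bigl(-\frac{a_0}{2}n\,d(\theta,\theta_0)^2\bigr)\), which is the source of the \(\frac{\sqrt{a_0}}{2}\). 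The upper bound \(p_\theta\le a_1r\) enters when passing from the centre \(\theta^{\ast}\) to a general \(\theta\in B\): it controls the growth of the affinity with \(p^{(n)}_{\theta_0}\) as \(\theta\) varies over \(B\), at a cost of \(\sqrt{a_1}\) per unit \(d\)-distance, giving the \(-\sqrt{a_1}\delta\); with \(d(\theta^{\ast},\theta_0)>\varepsilon\) and radius \(\le\delta\varepsilon\) one gets \((\frac{\sqrt{a_0}}{2}-\sqrt{a_1}\delta)^2\varepsilon^2\), positive because \(\delta<\frac{\sqrt{a_0}}{2\sqrt{a_1}}\). The restriction \(0<\alpha<\frac12\) enters through the power inequalities used to pass from the \(\alpha\)-affinity to a Hellinger-type affinity to which this recursion applies (one needs \(2\alpha<1\)), and it is what replaces the \(\frac12(1-\alpha)\) of Proposition~\ref{prop:2} by \(\frac12-\alpha\); the initial factor \(q_\theta/q_{\theta_0}\) is harmless since \(\int q_\theta^{\,\alpha}q_{\theta_0}^{\,1-\alpha}\,d\mu\le1\), and the \(2\) absorbs a routine splitting.

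The real obstacle, as I see it, is precisely this conversion: turning the \emph{averaged} quantity \(d(\theta,\theta_0)^2=\int H(p_\theta(\cdot\mid x),p_{\theta_0}(\cdot\mid x))^2\,d\nu(x)\) into an exponentially decaying bound on the \(n\)-fold chain affinity that is uniform in the starting state. A pointwise-in-\(x\) (worst-case) argument is hopeless, since the conditional Hellinger distance can vanish at some states while the \(\nu\)-average is positive, so one must genuinely use the ergodicity of the \(p_{\theta_0}\)-chain provided by the minorisation to replace the worst case by the stationary average \(\ge a_0\int(\cdot)\,d\nu\), and then keep track of constants carefully enough that exactly \((\frac{\sqrt{a_0}}{2}-\sqrt{a_1}\delta)^2\) appears. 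The remaining ingredients — the subadditivity reduction, the affinity identity, the passage from \(\sum_j\Pi_n(B_j)^{\alpha}\) back to \(C(\delta\varepsilon,A,\alpha,d)\), and the treatment of \(q_\theta\) — are routine.
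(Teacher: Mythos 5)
You have the outline right (cover by $d$-balls of radius $\delta\varepsilon$, reduce to a single ball by $\alpha$-subadditivity, bound an $\alpha$-affinity between the ball's mixture and $p^{(n)}_{\theta_0}$), and you correctly identify what the bounds $a_0r\le p_\theta\le a_1r$ are for. But the mechanism you propose for closing the chain argument --- invoking a Doeblin minorisation to get an invariant measure $\pi\ge a_0\nu$ and then claiming the nested affinities decay ``at the stationary rate'' $\exp(-\tfrac n2\int H^2\,d\pi)$ --- is not what the paper does, and as stated it does not work. The proposition asserts an inequality valid for \emph{all} $n$, starting from an arbitrary initial density $q_{\theta_0}$; the chain is not assumed stationary or $\alpha$-mixing (the surrounding text emphasises dispensing with exactly that). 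An ergodicity argument gives at best an asymptotic statement, and you never explain how the expectation of the random product $\prod_i g_\theta(X_i)$ along the dependent chain is converted into a deterministic exponential --- that conversion is the whole difficulty, and ``mixing'' does not supply it.

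The idea you are missing is the even--odd splitting. Take $n=2k$ and write the mixture likelihood as $\bar q_B(X_0)\prod_{s=0}^{2k-1}I_{j,s}$, where $I_{j,s}$ is the conditional mixture transition density given $X_0,\dots,X_s$. Then split the product over $s$ into odd and even $s$ and apply Cauchy--Schwarz (exponent $2$ on each half). Within each half, the nested integration now closes cleanly: the innermost $X_{2k}$-integral yields a $2\alpha$-affinity between $I_{j,2k-1}$ and $p_{\theta_0}(\cdot\mid X_{2k-1})$; since no \emph{other} factor in that half depends on $X_{2k-1}$, it can be integrated against the Markov kernel $p_{\theta_0}(X_{2k-1}\mid X_{2k-2})\,d\mu$ alone, at which point the Doeblin lower bound $p_{\theta_0}(\cdot\mid x)\ge a_0 r(\cdot)$ is used to dominate that kernel integral by $a_0\int(\cdot)\,d\nu$, producing exactly $d(I_{j,2k-1},\theta_0)^2$ --- no invariant measure needed. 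Iterating $k$ times gives a factor $e^{-(1-2\alpha)(\sqrt{a_0}/2-\sqrt{a_1}\delta)^2k\varepsilon^2}$ per half, whence the exponent $(\tfrac12-\alpha)(\sqrt{a_0}/2-\sqrt{a_1}\delta)^2 n\varepsilon^2$. The constraint $\alpha<\tfrac12$ arises because after the Cauchy--Schwarz split the working exponent is $2\alpha$, which must be strictly less than $1$ for the power-reduction to a Hellinger affinity (the same device as in Proposition~\ref{prop:2}) to be applicable; the upper bound $p_\theta\le a_1r$ enters via Jensen to show $d(I_{j,s},\theta_j)\le\tfrac{2\sqrt{a_1}}{\sqrt{a_0}}\delta\varepsilon$, giving the $-\sqrt{a_1}\delta$ shift. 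Without the even--odd split there is no clean way to iterate, and the ergodic shortcut you appeal to is not available under the proposition's hypotheses.
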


Therefore we have

\begin{thm}\label{thm:7}  Suppose that all assumptions of Proposition \ref{prop:4} hold and
suppose that $\varepsilon_n>0$, $n\,\varepsilon_n^2\geq c_0\,\log n$ for all large $n$ and some fixed constant $c_0>0$. Suppose that there exist $c_1< ({1\over 2}-\alpha)({\sqrt{a_0}\over 2}-\sqrt{a_1}\delta)^2$, $c_2>{1\over c_0}$ and a sequence of subsets $\Theta_n$ on $\Theta$ such that
$$C({ \delta j\varepsilon_n},\{\theta\in\Theta_n: j\varepsilon_n < d(\theta,\theta_0)\leq 2j\varepsilon_n\},\alpha,d)\leq e^{c_1j^2n\varepsilon_n^2}\, \Pi_n\bigl( W^1_n(\theta_0,\varepsilon_n) \bigr)^{\alpha}  $$
for all large  $j$, $n$, and
$$\sum\limits_{n=1}^\infty{e^{n\,\varepsilon_n^2\,(3 a_1+4c_2)}\, \Pi_n(\Theta\setminus \Theta_n)\over \Pi_n\bigl( W^1_n(\theta_0,\varepsilon_n) \bigr)}<\infty.$$
\smallskip
\noindent Then there exists $b>0$ such that for each large $r$ and all large $n$,
$$\Pi_n\bigl(\theta\in \Theta:\,d(\theta,\theta_0)\geq r\,\varepsilon_n|\,X_0,X_1,\dots,X_n\bigr)\leq e^{-bn\varepsilon_n^2}\qquad {\rm almost\ surely}.$$
\end{thm}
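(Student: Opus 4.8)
The plan is to derive Theorem~\ref{thm:7} as an application of the general almost sure rate theorem, Theorem~\ref{thm:1}, by checking that its hypotheses hold in the Markov-chain setting with $d_n=e_n=d$, where $d(\theta,\theta_0)=H(p_\theta,p_{\theta_0})$, and with the prior-concentration set $W_n(\theta_0,\varepsilon)$ replaced by $W^1_n(\theta_0,\varepsilon)$. The first and central step is to verify Assumption~\ref{ass:2}: this is precisely what Proposition~\ref{prop:4} delivers, since under the boundedness hypothesis $a_0r(y)\le p_\theta(y|x)\le a_1 r(y)$ it gives
$$P_{\theta_0}^{(n)}\Big(\int_{\theta\in\Theta_1:\,d(\theta,\theta_0)>\varepsilon}R_\theta^{(n)}(X^{(n)})\,\Pi_n(d\theta)\Big)^\alpha\le 2\,e^{-K_2 n\varepsilon^2}\,C(\delta\varepsilon,\{d(\cdot,\theta_0)>\varepsilon\},\alpha,d)$$
with $K_2=(\tfrac12-\alpha)(\tfrac{\sqrt{a_0}}{2}-\sqrt{a_1}\delta)^2$, $K_1=2$, $K_3=1$. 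Strictly, Assumption~\ref{ass:2} as stated asks for $C(\varepsilon,\cdot)$ rather than $C(\delta\varepsilon,\cdot)$; since $\delta<1$ and the Hausdorff $\alpha$-constant is monotone in the radius, $C(\delta\varepsilon,\cdot)\le C(\varepsilon,\cdot)$ only in the wrong direction, so instead I would run the argument of Theorem~\ref{thm:1} directly with the $\delta\varepsilon$-version of the covering constant appearing in the entropy hypothesis — exactly as is done in Corollary~\ref{cor:0} and Theorem~\ref{thm:4}, where the entropy condition is stated at radius $\delta j\varepsilon_n$. This is why the hypothesis of Theorem~\ref{thm:7} is written with $C(\delta j\varepsilon_n,\cdot)$.

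The second step is to match the prior-mass denominators. Theorem~\ref{thm:1} is phrased with $\Pi_n(W_n(\theta_0,\varepsilon_n))$, the set controlled by $H_*$ on $p_\theta^{(n)}$. I would show $W^1_n(\theta_0,\varepsilon)\subset W_n(\theta_0,\varepsilon)$ by the same product-over-coordinates computation used for i.n.i.d.\ observations in Section~3.1: writing the joint density as $q_\theta(x_0)\prod_{i=1}^n p_\theta(x_i|x_{i-1})$ and using the multiplicativity
$$1+\tfrac32 H_*\!\big(p_{\theta_0}^{(n)},p_\theta^{(n)}\big)^2\le \big(1+\tfrac32 H_*(q_{\theta_0},q_\theta)^2\big)\prod_{i=1}^n\big(1+\tfrac32 H_*(p_{\theta_0},p_\theta)^2\big)\le e^{\frac32(H_*(q_{\theta_0},q_\theta)^2+nH_*(p_{\theta_0},p_\theta)^2)},$$
so that $H_*(q_{\theta_0},q_\theta)^2+nH_*(p_{\theta_0},p_\theta)^2\le n\varepsilon^2$ forces $\theta\in W_n(\theta_0,\varepsilon)$; here the boundedness hypothesis is used to make the conditional Hellinger affinities genuinely $x$-independent so the product telescopes. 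Dividing through by $n$ shows $W^1_n(\theta_0,\varepsilon)\subset W_n(\theta_0,\varepsilon)$, hence $\Pi_n(W_n(\theta_0,\varepsilon_n))\ge\Pi_n(W^1_n(\theta_0,\varepsilon_n))$ and the entropy hypothesis (2) of Theorem~\ref{thm:1} follows from the one assumed in Theorem~\ref{thm:7}. The remaining hypotheses of Theorem~\ref{thm:1} transfer directly: $n\varepsilon_n^2\ge c_0\log n$ is assumed, $c_1<K_2$ is assumed, and the summability condition of Theorem~\ref{thm:1} — $\sum_n e^{(3+2c_2)n\varepsilon_n^2}\Pi_n(\Theta\setminus\Theta_n)/\Pi_n(W_n(\theta_0,\varepsilon_n))<\infty$ — is implied by the assumed $\sum_n e^{(3a_1+4c_2)n\varepsilon_n^2}\Pi_n(\Theta\setminus\Theta_n)/\Pi_n(W^1_n(\theta_0,\varepsilon_n))<\infty$ since $a_1\ge1$ gives $3a_1+4c_2\ge 3+2c_2$ (using $c_2>0$) and the denominator only shrinks when passing to $W^1_n$. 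The inflated exponent $3a_1$ in place of $3$ is the price paid for bounding the initial-density likelihood ratio $q_\theta(X_0)/q_{\theta_0}(X_0)$ and the one extra factor lost in the Markov normalization; this bookkeeping is where I would be most careful.

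Having checked all hypotheses, Theorem~\ref{thm:1} yields a constant $b>0$ with $\Pi_n(\theta\in\Theta:\,d(\theta,\theta_0)\ge r\varepsilon_n\mid X_0,\dots,X_n)\le e^{-bn\varepsilon_n^2}$ almost surely for all large $n$ and large $r$, which is exactly the conclusion. The main obstacle I anticipate is purely technical rather than conceptual: reconciling the covering radius ($\delta j\varepsilon_n$ versus $j\varepsilon_n$) and the precise exponent constants between the form of Assumption~\ref{ass:2} that Proposition~\ref{prop:4} verifies and the form invoked inside the proof of Theorem~\ref{thm:1}, together with tracking the initial-distribution factor $q_\theta(X_0)/q_{\theta_0}(X_0)$ — which is not bounded by the hypotheses and must be absorbed either into $W^1_n$ (as the $\tfrac1n H_*(q_{\theta_0},q_\theta)^2$ term) or controlled via a crude moment bound — through the exponential-inequality machinery. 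None of this requires new ideas, only a careful re-run of the Theorem~\ref{thm:1} argument with the Markov-specific likelihood ratio and the weaker covering radius, exactly parallel to how Theorem~\ref{thm:4} is deduced from Theorem~\ref{thm:1} in the i.n.i.d.\ case.
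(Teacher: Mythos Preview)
Your overall strategy is correct: Theorem~\ref{thm:7} is indeed obtained by re-running the proof of Theorem~\ref{thm:1} with Proposition~\ref{prop:4} supplying the integration inequality (the analog of Assumption~\ref{ass:2} at covering radius $\delta\varepsilon$), and your handling of the $\delta j\varepsilon_n$ versus $j\varepsilon_n$ radius is fine. The gap is in your second step.

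The inclusion $W^1_n(\theta_0,\varepsilon)\subset W_n(\theta_0,\varepsilon)$ via the product bound
\[
1+\tfrac32 H_*\!\big(p_{\theta_0}^{(n)},p_\theta^{(n)}\big)^2\ \le\ \big(1+\tfrac32 H_*(q_{\theta_0},q_\theta)^2\big)\prod_{i=1}^n\big(1+\tfrac32 H_*(p_{\theta_0},p_\theta)^2\big)
\]
does \emph{not} follow for Markov chains. That multiplicativity is an identity for independent observations because the joint density factorizes into marginals; here the joint density is $q_\theta(x_0)\prod_i p_\theta(x_i|x_{i-1})$, and when you integrate out $x_n$ you obtain $1+\tfrac32 H_*\!\big(p_{\theta_0}(\cdot|x_{n-1}),p_\theta(\cdot|x_{n-1})\big)^2$, which still depends on $x_{n-1}$. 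The remaining integrand in $x_{n-1}$ is then $p_{\theta_0}(x_{n-1}|x_{n-2})^{3/2}/p_\theta(x_{n-1}|x_{n-2})^{1/2}$, which is not a probability density, so you cannot iterate. The boundedness hypothesis does not make the conditional affinities $x$-independent; it only lets you replace a transition-density integration $p_{\theta_0}(x|x')\,d\mu(x)$ by $a_1\,d\nu(x)$.

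The paper therefore does \emph{not} reduce to Lemma~\ref{lem:2} via an inclusion of sets; it instead proves a Markov analogue of Lemma~\ref{lem:2} directly (Lemma~\ref{lem:5}), applying Chebyshev with exponent $-1/4$ rather than $-1/2$, then H\"older to split the product over even and odd indices. Within each block the interleaved factor $p_{\theta_0}(X_{2j-1}|X_{2j-2})$ \emph{is} a transition probability, and the bound $p_{\theta_0}(x|x')\le a_1 r(x)$ converts the resulting conditional $H_*$ into the $\nu$-based $H_*(p_{\theta_0},p_\theta)$ at the cost of a factor $a_1$. This blocking is the source of both nonstandard constants you noticed: the $3a_1$ comes from the $a_1$ picked up at each step, and the $4c_2$ (rather than $2c_2$) comes from the $-1/4$ exponent in Chebyshev needed to make the H\"older split land on exponent $1/2$ per block. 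So the correct route is: replace Lemma~\ref{lem:2} by Lemma~\ref{lem:5} and otherwise repeat the proof of Theorem~\ref{thm:1} verbatim.
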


By choosing $\delta={\sqrt{a_0}\over 4\sqrt{a_1}}$ and $\alpha={1\over 4}$ we can easily get

\begin{cor}\label{cor:4}
Suppose that there exist a $\mu$-integrable function $r(y)$ and  constants $a_1\geq  a_0>0$  such that $d\nu(y)=r(y)d\mu(y)$ and
$a_0r(y)\leq p_{\theta}(y|x)\leq  a_1 r(y)$
for all $\theta\in\Theta$ and $x,y\in \mathfrak{X}$.
Suppose that $\varepsilon_n>0$, $n\,\varepsilon_n^2\geq c_0\,\log n$ for all large $n$ and some fixed constant $c_0>0$. Suppose that there exist $c_1,\ c_2,\ c_3$ with $3c_1+c_2<a_0/16$ and $c_3>1/c_0$ and a sequence of subsets $\Theta_n$ on $\Theta$ such that for all large $j$  and  $n,$
$$\leqno\quad (i)\quad N\big({\sqrt{a_0}\over 4\sqrt{a_1}}j\varepsilon_n,\{\theta\in\Theta_n: j\varepsilon_n < d(\theta,\theta_0)\leq 2j\varepsilon_n\},d\big)\leq e^{c_1j^2n\varepsilon_n^2};$$
$$\leqno\quad (ii)\quad \Pi_n\big(\theta\in\Theta_n: j\varepsilon_n < d(\theta,\theta_0)\leq 2j\varepsilon_n\big)\leq e^{c_2j^2n\varepsilon_n^2}\, \Pi_n\bigl( W^1_n(\theta_0,\varepsilon_n) \bigr);$$
$$\leqno\quad (iii)\quad \sum\limits_{n=1}^\infty{e^{n\,\varepsilon_n^2\,(3 a_1+4c_3)}\, \Pi_n(\Theta\setminus \Theta_n)\over \Pi_n\bigl( W^1_n(\theta_0,\varepsilon_n) \bigr)}<\infty.$$
\smallskip
\noindent Then there exists $b>0$ such that for each large $r$ and all large $n$,
$$\Pi_n\bigl(\theta\in \Theta:\,d(\theta,\theta_0)\geq r\,\varepsilon_n|\,X_0,X_1,\dots,X_n\bigr)\leq e^{-bn\varepsilon_n^2}\qquad {\rm almost\ surely}.$$
\end{cor}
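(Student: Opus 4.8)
The plan is to obtain Corollary \ref{cor:4} directly from Theorem \ref{thm:7} by substituting the explicit values $\delta=\frac{\sqrt{a_0}}{4\sqrt{a_1}}$ and $\alpha=\frac14$ and checking that, under these choices, the hypotheses (i)--(iii) of the corollary imply the hypotheses of Theorem \ref{thm:7}. First I would verify that the constraint $0<\delta<\frac{\sqrt{a_0}}{2\sqrt{a_1}}$ required by Proposition \ref{prop:4} (and hence by Theorem \ref{thm:7}) holds: indeed $\frac{\sqrt{a_0}}{4\sqrt{a_1}}<\frac{\sqrt{a_0}}{2\sqrt{a_1}}$ since $a_1\ge a_0>0$ forces $\frac{\sqrt{a_0}}{4\sqrt{a_1}}\le\frac14<\frac{\sqrt{a_0}}{2\sqrt{a_1}}$ is automatic only when $a_0\le a_1$, which is assumed; likewise $\alpha=\frac14\in(0,\frac12)$ is immediate. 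The additional hypothesis $a_1\ge 1$ in Proposition \ref{prop:4} is covered because the corollary's conclusion is vacuous otherwise — more precisely, one may simply note that the corollary as stated inherits that restriction through Theorem \ref{thm:7}, or observe that replacing $a_1$ by $\max(a_1,1)$ and $r(y)$ accordingly only enlarges the admissible range, so there is no loss.

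Next I would compute the key constant. With $\alpha=\frac14$ and $\delta=\frac{\sqrt{a_0}}{4\sqrt{a_1}}$ we have
$$\Big(\tfrac12-\alpha\Big)\Big(\tfrac{\sqrt{a_0}}{2}-\sqrt{a_1}\,\delta\Big)^2=\tfrac14\Big(\tfrac{\sqrt{a_0}}{2}-\tfrac{\sqrt{a_0}}{4}\Big)^2=\tfrac14\cdot\tfrac{a_0}{16}=\tfrac{a_0}{64}.$$
Hmm — this gives $\frac{a_0}{64}$, whereas the corollary's condition on $c_1,c_2$ is $3c_1+c_2<\frac{a_0}{16}$. I would reconcile this through the decomposition of the Hausdorff constant: by Lemma 1 of Xing and Ranneby \cite{xir1}, condition (2)-type bounds on $C(\delta j\varepsilon_n,\cdot,\alpha,d)$ split, via the inequality $C\le \Pi_n(\cdot)^\alpha N(\cdot,d)^{1-\alpha}$, into a covering-number factor and a prior-mass factor. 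With $\alpha=\frac14$ one gets $1-\alpha=\frac34$, so hypothesis (i) of the corollary contributes $c_1(1-\alpha)=\frac34 c_1$ to the exponent and hypothesis (ii) contributes $c_2\alpha=\frac14 c_2$; I then need $\frac34 c_1+\frac14 c_2<\frac{a_0}{64}$, i.e. $3c_1+c_2<\frac{a_0}{16}$, which is exactly the stated condition. So the correct route is: from (i) and (ii), derive the single Hausdorff-constant inequality required by Theorem \ref{thm:7} with $c_1^{\mathrm{Thm}}:=\frac34 c_1+\frac14 c_2<\frac{a_0}{64}=(\frac12-\alpha)(\frac{\sqrt{a_0}}{2}-\sqrt{a_1}\delta)^2$, invoking $K_3=1$ (the relevant value here, since Proposition \ref{prop:4} is an instance of Assumption \ref{ass:2} with $K_3=1$).

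Finally, hypothesis (iii) of the corollary is literally hypothesis (iii) of Theorem \ref{thm:7} (same series $\sum_n e^{n\varepsilon_n^2(3a_1+4c_3)}\Pi_n(\Theta\setminus\Theta_n)/\Pi_n(W^1_n(\theta_0,\varepsilon_n))<\infty$ with the relabelling $c_3\mapsto c_2$), and the growth condition $n\varepsilon_n^2\ge c_0\log n$ together with $c_3>1/c_0$ is common to both. Thus all hypotheses of Theorem \ref{thm:7} are met, and its conclusion — existence of $b>0$ with $\Pi_n(\theta:d(\theta,\theta_0)\ge r\varepsilon_n\mid X_0,\dots,X_n)\le e^{-bn\varepsilon_n^2}$ almost surely for all large $r,n$ — is precisely the conclusion of the corollary. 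I expect the only real point of care to be the bookkeeping in the previous paragraph: making sure the split of the Hausdorff constant into covering-number and prior-mass pieces lines up the exponents $c_1(1-\alpha)+c_2\alpha$ with the single threshold $\frac{a_0}{64}$, so that the clean-looking inequality $3c_1+c_2<a_0/16$ in the corollary is exactly what is needed; everything else is substitution of the chosen $\alpha$ and $\delta$.
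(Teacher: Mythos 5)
Your proposal is correct and takes the same approach the paper intends: substitute $\delta=\sqrt{a_0}/(4\sqrt{a_1})$ and $\alpha=1/4$ into Theorem~\ref{thm:7}, combine (i) and (ii) via the Hausdorff bound $C\le \Pi_n(\cdot)^\alpha N(\cdot)^{1-\alpha}$ to get an exponent $\tfrac34 c_1+\tfrac14 c_2<\tfrac{a_0}{64}$, which is exactly $3c_1+c_2<a_0/16$, and pass (iii) through with the relabelling $c_3\leftrightarrow c_2$. (Your side remark about the $a_1\ge 1$ restriction is a bit loose --- swapping $a_1$ for $\max(a_1,1)$ would also shrink the radius in (i) --- but this does not affect the argument, since the paper's corollary silently inherits $a_1\ge 1$ from Proposition~\ref{prop:4} via Theorem~\ref{thm:7} just as you observe.)
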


\section{Applications}
In this section we gives three examples of applications of our theorems. By means of Corollary \ref{cor:4}, we improve on the posterior rate of convergence for the nonlinear autoregressive model in  Ghosal and van der Vaart \cite{ghv1}. Corollary \ref{cor:0} is applied to find the posterior convergence rate for an infinite-dimensional normal model, which extends the known results in Ghosal and van der Vaart \cite{ghv1}, Scricciolo \cite{scr}, Shen and Wasserman \cite{shw} and Zhao \cite{zha} for the white noise model with a conjugate prior. Finally, we use Corollary \ref{cor:3} to study priors based on uniform distributions, which extends the corresponding result for priors based on discrete distributions in  Ghosal and van der Vaart \cite{ghv1}.
\bigskip

\noindent{\it 4.1. Nonlinear autoregression.} We observe $X_1,X_2,\dots,X_n$ of a time series $\{X_t:t\in\mathbb{Z}\}$ given by
$$X_i=f(X_{i-1})+\bar\varepsilon_i\qquad {for}\quad i=1,2,\dots,n,$$
where $\bar\varepsilon_1,\bar\varepsilon_2,\dots,\bar\varepsilon_n$ are i.i.d. random variables with the standard normal distribution and the unknown regression function $f$ is in the space ${\cal F}$ which consists of all functions $f$ with $\sup\limits_{x\in \R}\big|f(x)\big|\leq M$ for some fixed positive constant $M$. Let $q_f(x)$ be the density of $X_0$ relative to the Lebesgue measure  $d\mu$  on $\R$. So $X_0,X_1,\dots $ can be considered as a Markov chain generated by the transition density $p_f(y|x)=\phi\big(y-f(x)\big)$ with $\phi(x)=(2\pi)^{-1/2}e^{-x^2/2}$ and the initial density $q_f(x)$. Since $\phi(x)$ is a strictly positive continuous function tending to zero as $x\to \pm\infty$, there exist two constants $0<a_0<1<a_1$ depending only on $M$ such that $a_0\phi(y)\leq p_f(y|x)\leq a_1\phi(y)$ for all $f\in{\cal F}$ and $-\infty<y,\,x<\infty.$ Assume that there exists a constant $N>0$ such that the set of initial densities of the Markov chain satisfies $H_*(q_{f_1},q_{f_2})\leq N$ for all initial densities $q_{f_1}$ and $q_{f_2}$. For instance, all of the initial densities with $a_0\phi(x)\leq q_f(x)\leq a_1\phi(x)$ satisfy $H_*(q_{f_1},q_{f_2})\leq \sqrt{2}(a_1/a_0)^{1/4}$ and hence form a set with the requirement. Define a measure $d\nu=\phi d\mu$ in $\R$ and a norm $||f||_2=\big(\int_{\R}|f|^2 d\nu\big)^{1/2}$ on ${\cal F}$. Assume that the true regression function $f_0\in{\cal F}$ belongs to the Lipschitz continuous space $Lip_M$, which consists of all functions $f$ on $(-\infty,\infty)$ satisfying $|f(x)-f(y)|\leq L\,|x-y|$ for all $-\infty<x,\,y<\infty$, where $L$ is a fixed positive constant. When the Markov chain is stationary,  Ghosal and van der Vaart (\cite{ghv1}, Section 7.4) constructed a prior on the regression functions and obtained the in-probability posterior convergence rate $n^{-1/3}(\log n)^{1/2}$, which is the minimax rate times the logarithmic factor $(\log n)^{1/2}$.
In the following we shall apply Corollary \ref{cor:4} to get the posterior convergence rate $n^{-1/3}(\log n)^{1/6}$ in the almost sure sense for a general Markov chain defined as above.

First, we note that for any $f\in {\cal F}$,
$$H_*(p_{f_0},p_f)^2+{1\over n}\,H_*(q_{f_0},q_f)^2\leq \sqrt{a_1\over a_0}H(p_{f_0},p_f)^2+{N^2\over n}$$
$$={1\over 2}\sqrt{a_1\over a_0}\int_{-\infty}^\infty\Big(1-e^{-{(f(x)-f_0(x))^2\over 4}}\Big)\,d\nu(x)+{N^2\over n}\leq {||f-f_0||_2^2\over 8}\sqrt{a_1\over a_0}+{N^2\over n},$$
where the last inequality follows from the elementary inequality $1-e^{-t}\leq t$. Hence for some small constant $b_1>0$ we have that
$W_n^1(f_0,\varepsilon_n)\supset \{f\in{\cal F}: ||f-f_0||_2\leq b_1\,\varepsilon_n\}$ for all large $n$. Similarly, $||f-f_0||_2\approx H(p_f,p_{f_0})$  hold for all $f\in {\cal F}$ with $||f-f_0||_2\leq 1$. Hence Corollary \ref{cor:4} works well for the metric $||\cdot||_2$.

We also need some basic facts on approximation of Lipschitz continuous functions by means of step functions.
Given a finite interval $[-A_n,A_n)$ and a positive integer $K_n$, we make the partition $[-A_n,A_n)=\bigcup_{k=1}^{K_n}I_k$ with $I_k=\big[-A_n+ {2A_n(k-1)\over K_n},-A_n+ {2A_nk\over K_n}\big)$ for $k=1,2,\dots,K_n$. Write $I_0=\R\setminus [-A_n,A_n)$. The space of step functions relative to the partition is the set of functions $h:[-A_n,A_n)\mapsto \R$ such that $h$ is identically equal to some constant on each $I_k$ for $k=1,2,\dots,K_n$, more precisely, $h(x)=\sum_{k=1}^{K_n}\beta_k\,1_{I_k}(x)$ for some $\beta=(\beta_1,\beta_2,\dots,\beta_{K_n})\in [-M,M]^{K_n}\subset \R^{K_n}$, where $1_{I_k}(x)$ denotes the indicator function of $I_k$.
Denote by $f_\beta(x)$ the function on $(-\infty,\infty)$ which is equal to $\sum_{k=1}^{K_n}\beta_k\,1_{I_k}(x)$ on $[-A_n,A_n)$ and vanish outside $[-A_n,A_n)$. Hence $f_\beta\in {\cal F}$ and $||f_{\beta_1}-f_{\beta_2}||_2=||\beta_1-\beta_2||_*$, where $||\beta||_*=\big(\sum_{k=1}^{K_n}\beta_k^2(\int_{I_k}d\nu)^2\big)^{1/2}$. Let $\Pi_n$  be the prior on ${\cal F}$ which is induced by the map $\beta\mapsto f_\beta$ such that all the coordinates $\beta_k$ of $\beta$ are chosen to be i.i.d. random variables with the uniform distribution on $[-M,M]$. Hence the support ${\cal F}_n$ of $\Pi_n$ consists of all such functions $f_\beta$. Take $A_n=2\sqrt{\log (1/\varepsilon_n)}\approx \sqrt{\log n}$ and $K_n=\lfloor {3L A_n \over b_1\varepsilon_n}\rfloor+1$ with  $\varepsilon_n=\big({\sqrt{\log n}\over n}\big)^{1/3}$. Then $K_n\approx (n\log n)^{1/3}\approx n\varepsilon_n^2$. Write $\beta_0=(\beta_{0,1},\beta_{0,2},\dots,\beta_{0,K_n})$ for $\beta_{0,k}=f_0\big(-A_n+ {2A_nk-1\over K_n}\big)$. Since $f_0\in {\cal F}\cap Lip_L$, we have that
$f_{\beta_0}\in{\cal F}$ and $\sup_{-A_n\leq x< A_n}|\,f_{\beta_0}(x)-f_0(x)\,|\leq L A_n/ K_n\leq b_1 \varepsilon_n/3.$ From the triangle inequality and the inequality $\int_x^\infty \phi(t)dt\leq \phi(x)/x$ for all $x>0$, it follows that for all $f_\beta\in {\cal F}_n$ and for all large $n$,
$$\big|\, ||f_\beta-f_0||_2-||f_\beta-f_{\beta_0}||_2\,\big|\leq ||f_{\beta_0}-f_0||_2=\Big(\int_{-A_n}^{A_n}|f_0-f_{\beta_0}|^2\,d\nu\Big)^{1/2}$$$$+
\Big(\int_{I_0}f_0^2\,d\nu\Big)^{1/2}\leq {b_1 \varepsilon_n\over 3} \Big(\int_{-A_n}^{A_n}d\nu\Big)^{1/2}+M
\Big({\phi(A_n)\over A_n}\Big)^{1/2}$$$$\leq {b_1 \varepsilon_n\over 3}+{M\varepsilon_n\over (2\pi)^{1/4}A_n^{1/2}}\leq {b_1 \varepsilon_n\over 2}.$$
Thus for all large $j$ and n, we have
$${\Pi_n\big(f_\beta\in {\cal F}_n: j\varepsilon_n < ||f_\beta-f_0||_2\leq 2j\varepsilon_n\big)\over \Pi_n\bigl( W^1_n(\theta_0,\varepsilon_n) \bigr)}\leq {\Pi_n\big(f_\beta\in {\cal F}_n: ||f_\beta-f_0||_2\leq 2j\varepsilon_n\big)\over  \Pi_n\big(f_\beta\in {\cal F}_n: ||f_\beta-f_0||_2\leq b_1\varepsilon_n\big) }$$
$$\leq {\Pi_n\big(f_\beta\in {\cal F}_n: ||f_\beta-f_0||_2\leq 3j\varepsilon_n\big)\over  \Pi_n\big(f_\beta\in {\cal F}_n: ||f_\beta-f_{\beta_0}||_2\leq {b_1\over 2}\varepsilon_n\big) }$$$$={\Pi_n\big(\beta\in [-M,M]^{K_n}: ||\beta-\beta_0||_*\leq 3j\varepsilon_n\big)\over  \Pi_n\big(\beta\in [-M,M]^{K_n}:  ||\beta-\beta_0||_*\leq {b_1\over 2}\varepsilon_n\big) }.$$
Note that the Euclidean volume of the $K_n$-dimensional ellipsoid $\{\beta\in \R^{K_n}: ||\beta-\beta_0||_*\leq r\}$ is equal to $r^{K_n}$ times the Euclidean volume of the "unit" $K_n$-dimensional ellipsoid $\{\beta\in \R^{K_n}: ||\beta-\beta_0||_*\leq 1\}$. So the last quotient doer not exceed  $j^{2K_n}=e^{K_n\log (2j)}$, which is less than
$e^{c_2j^2n\varepsilon_n^2}$ for any given $c_2>0$ and all large $j$. Hence we have obtained condition (ii) of Corollary \ref{cor:4}. Similarly, for all large $j$ and n, we have
$$ N\big({\sqrt{a_0}\over 4\sqrt{a_1}}j\varepsilon_n,\{f_\beta\in {\cal F}_n: j\varepsilon_n < ||f_\beta-f_0||_2\leq 2j\varepsilon_n\},||\cdot||_2\big)$$
$$\leq  N\big({\sqrt{a_0}\over 4\sqrt{a_1}}j\varepsilon_n,\{f_\beta\in {\cal F}_n: ||f_\beta-f_{\beta_0}||_2\leq 3j\varepsilon_n\},||\cdot||_2\big)$$
$$\leq  N\big({\sqrt{a_0}\over 4\sqrt{a_1}}j\varepsilon_n,\{\beta\in [-M,M]^{K_n}: ||\beta-\beta_0||_*\leq 3j\varepsilon_n\},||\cdot||_*\big),$$
which, by Lemma 4.1 in Pollard \cite{pol}, is less than $ b_2^{K_n}= e^{K_n\log b_2}$ for some constant $b_2>0$, and therefore condition (i) of Corollary \ref{cor:4} holds for any given $c_1>0$.

\bigskip

\noindent{\it 4.2. Infinite-dimensional normal model.} We observe an infinite-dimensional random vector $(X_1,X_2,\dots)$, where the random vector $X^{(n)}=(X_1,\dots,X_n)$ for each $n$ is normally distributed according to $N(\theta_{(n)},\Sigma_{(n)})$ with density $p_{\theta_{(n)}}^{(n)}(x^{(n)})$, $\theta_{(n)}=(\theta_1,\dots,\theta_n)$, and the covariance matrix $\Sigma_{(n)}$ is known and satisfies $$\alpha\Sigma_{(n)}^{-1}\alpha^T\approx n\sum_{i=1}^n\alpha_i^2\eqno (a)$$
for all $\alpha=(\alpha_1,\dots,\alpha_n)\in \R^n$ and for all $n$. The parameter space $\Theta$ consists of all vectors $\theta=(\theta_1,\theta_2,\dots)$ in $\R^\infty$ with $||\theta||_2:=\big(\sum_{i=1}^\infty\theta_i^2\big)^{1/2}<\infty$. In this section we identify $\theta_{(n)}=(\theta_1,\dots,\theta_n)$ with $(\theta_1,\dots,\theta_n,0,0,\dots)$ and  hence the norm $||\theta_{(n)}||_2$ makes sense.
Let $\gamma$ be a positive constant. The true parameter $\theta_0=(\theta_{0,1},\theta_{0,2},\dots)$ is assumed to satisfy $$\sum_{i=1}^\infty\theta_{0,i}^2i^{2\gamma}<\infty.\eqno (b)$$
In the special case that $X_1,X_2,\dots$ are independent random variables and each $X_i$ is normally distributed with mean $\theta_i$ and variance $1/n$, the Bayesian estimation problem on parameters $\theta=(\theta_1,\theta_2,\dots)$ has been studied by many authors including Cox \cite{cox}, Freedman \cite{fr1}, Ghosal and van der Vaart \cite{ghv1}, Scricciolo \cite{scr}, Shen and Wasserman \cite{shw} and Zhao \cite{zha}. They showed that posteriors can attain the minimax rate $n^{-\gamma/(2\gamma+1)}$. Observe that every white noise model can be described as an infinite-dimensional normal model via an orthonormal basis.

Now we construct a prior such that the posterior attains the optimal rate of convergence in our framework. We put the prior on the parameter $\theta=(\theta_1,\theta_2,\dots)$ such that $\theta_{(k)}=(\theta_1,\dots,\theta_k)$ is distributed as $N(0,\Sigma_k)$ and that $\theta_{k+1},\theta_{k+2},\dots$ are set to be zero, where $k=\lfloor n^{1/(2\gamma+1)}c\rfloor$ with some positive constant $c$ which is determined later and the covariance matrix $\Sigma_k$ is assumed to satisfy $$\alpha\Sigma_k^{-1}\alpha^T\lesssim k\sum_{i=1}^k\alpha_i^2i^{2\gamma}\eqno (c)$$ for all $\alpha=(\alpha_1,\dots,\alpha_k)\in\R^k$ and for all such $k$. For instance, the last inequality holds if eigenvalues $\lambda_1\leq \lambda_2\leq \dots\leq\lambda_k$ of positive definite matrices $\Sigma_k^{-1}$ satisfy $\lambda_i\leq k \,i^{2\gamma}$ for $i=1,2,\dots,k$, which for independent variables $X_1,X_2,\dots$ is slightly weaker than the condition (7.8) given in Ghosal and van der Vaart \cite{ghv1}.
In the following we shall apply Corollary \ref{cor:0} to show that the corresponding posterior converges at the rate $\varepsilon_n=n^{-\gamma/(2\gamma+1)}$.

\begin{thm}\label{thm:9} Assume that $(a),\,(b)$ and $(c)$ hold. Let $k=\lfloor n^{1/(2\gamma+1)}c\rfloor$ and $\varepsilon_n=n^{-\gamma/(2\gamma+1)}$. Then there exist constants $c>0$ and  $r>0$ such that
$$\Pi_n\bigl(\theta\in \Theta:\,||\theta-\theta_0||_2\geq r\,\varepsilon_n|X^{(n)}\bigr)\longrightarrow 0$$
almost surely as $n\to\infty$.
\end{thm}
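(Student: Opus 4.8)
The plan is to verify the hypotheses of Corollary \ref{cor:0} for the metric $d_n^1 = \|\cdot\|_2$ (restricted to the relevant finite-dimensional sections), the true parameter $\theta_0$, the prior $\Pi_n$ described above, and the rate $\varepsilon_n = n^{-\gamma/(2\gamma+1)}$. Since $\varepsilon_n^2 n = n^{1/(2\gamma+1)} \gtrsim \log n$ only fails — indeed $n^{1/(2\gamma+1)}$ grows polynomially, so $n\varepsilon_n^2 \geq c_0 \log n$ holds trivially — the rate condition is satisfied. The core of the argument is a lower bound on $\Pi_n\bigl(W_n(\theta_0,\varepsilon_n)\bigr)$ and matching upper bounds on the Hausdorff $\alpha$-constant $C(\delta j\varepsilon_n,\{\theta : j\varepsilon_n < \|\theta-\theta_0\|_2 \le 2j\varepsilon_n\},\alpha,\|\cdot\|_2)$. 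I would fix $\alpha = 1/2$ and $\delta$ small enough that $c_1 < \tfrac12(1-\alpha)(1-2\delta)^2 = \tfrac14(1-2\delta)^2$ will hold for the $c_1$ produced below.

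First I would translate the neighborhoods. Using hypothesis $(a)$, the Kullback-Leibler divergence and the quantities $H_*$, $V_{k,0}$ between two $N(\cdot,\Sigma_{(n)})$ densities are all comparable to $n\|\theta_{(n)}-\theta'_{(n)}\|_2^2$ (for Gaussians, $K$ equals $\tfrac12(\theta-\theta')\Sigma^{-1}(\theta-\theta')^T$ exactly, and $H_*$ is controlled by the same quadratic form up to constants since the density ratio is bounded on the relevant region — or one computes $H_*$ directly from the Gaussian formula). Hence $W_n(\theta_0,\varepsilon_n) \supset \{\theta : \|\theta-\theta_0\|_2 \le b_1\varepsilon_n\}$ for a small constant $b_1>0$. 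The prior mass of this ball is estimated by splitting $\theta_0$ into its first-$k$ block and tail: the tail $\sum_{i>k}\theta_{0,i}^2 \le k^{-2\gamma}\sum_i \theta_{0,i}^2 i^{2\gamma} \lesssim k^{-2\gamma} \approx \varepsilon_n^2/c^{2\gamma}$ by $(b)$ and the choice of $k$, so for $c$ large this tail is $\le (b_1\varepsilon_n/2)^2$; what remains is to bound below the $\Sigma_k$-Gaussian mass of a Euclidean ball of radius $\asymp \varepsilon_n$ around $\theta_{0,(k)}$. Standard small-ball estimates for a centered Gaussian on $\R^k$, together with $(c)$ (which says $\Sigma_k^{-1}$ has eigenvalues $\lesssim k i^{2\gamma} \le k^{1+2\gamma} \approx n$), give $\Pi_n\bigl(W_n(\theta_0,\varepsilon_n)\bigr) \ge e^{-C k \log(1/\varepsilon_n) - \tfrac12 \theta_{0,(k)}\Sigma_k^{-1}\theta_{0,(k)}^T}$; and $\theta_{0,(k)}\Sigma_k^{-1}\theta_{0,(k)}^T \lesssim k\sum_i \theta_{0,i}^2 i^{2\gamma} \lesssim k \approx n\varepsilon_n^2$ by $(c)$ and $(b)$, while $k\log(1/\varepsilon_n) \asymp n^{1/(2\gamma+1)}\log n \approx n\varepsilon_n^2 \log n$. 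So $\Pi_n\bigl(W_n(\theta_0,\varepsilon_n)\bigr) \ge e^{-C' n\varepsilon_n^2 \log n}$; this is the one place where the extra $\log n$ creeps in and where I would need to be slightly careful, but since $n\varepsilon_n^2 \gtrsim \log n$ is not actually needed with equality — rather the inequality $C(\cdot)\le e^{c_1 j^2 n\varepsilon_n^2}\Pi_n(W_n)^\alpha$ must hold — I would instead work directly with the sharper small-ball bound $\Pi_n(W_n(\theta_0,\varepsilon_n)) \ge e^{-C'' k}$ obtainable because the ball radius $\varepsilon_n$ and the per-coordinate prior standard deviations are of comparable order $n^{-1/2}$ for the last coordinates; this yields $\Pi_n(W_n(\theta_0,\varepsilon_n)) \ge e^{-C'' n\varepsilon_n^2}$, removing the logarithm.

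Next I would bound the Hausdorff $\alpha$-constant via the inequality $C(\delta,\Theta_1,\alpha,e_n) \le \Pi_n(\Theta_1)^\alpha N(\delta,\Theta_1,e_n)^{1-\alpha}$ recalled in the excerpt, with $\Theta_n = \Theta$ (so condition $(iii)$-type terms are absent and we are in the setting of Corollary \ref{cor:0} directly). The support of $\Pi_n$ lives in a $k$-dimensional subspace, so the set $\{\theta : j\varepsilon_n < \|\theta-\theta_0\|_2 \le 2j\varepsilon_n\}$ intersected with the support is contained in a $k$-dimensional Euclidean ball of radius $\lesssim j\varepsilon_n$ around (the projection of) $\theta_0$, whence its covering number at scale $\delta j\varepsilon_n$ is $\le (C/\delta)^k = e^{Ck\log(1/\delta)}$ by the standard volumetric bound, uniformly in $j$. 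Thus $C(\delta j\varepsilon_n,\cdot,\alpha,\|\cdot\|_2) \le \Pi_n(\cdot)^\alpha \cdot e^{(1-\alpha)Ck\log(1/\delta)}$, and dividing by $\Pi_n(W_n(\theta_0,\varepsilon_n))^\alpha \ge e^{-\alpha C'' n\varepsilon_n^2}$ we get the required bound of the form $e^{c_1 j^2 n\varepsilon_n^2}$ provided $C k \log(1/\delta) \lesssim n\varepsilon_n^2 = k/c^{\,\ast}$, i.e. provided the constant absorbed by choosing $c$ appropriately (and $\delta$, $\alpha$ fixed first) beats $C\log(1/\delta)$ — which it does once $c$ is large, because $k \asymp c\, n\varepsilon_n^2$, so the ratio is a fixed multiple of $\log(1/\delta)$ that can be made $< \tfrac14(1-2\delta)^2$ by the interplay of the constants, or, if that is too tight, by noting the $j^2$ on the right makes the inequality easier for every large $j$ while for the finitely many small $j$ one adjusts $r$. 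Having verified all hypotheses of Corollary \ref{cor:0} with $d_n^1 = \|\cdot\|_2$, its conclusion gives $\Pi_n(\theta : \|\theta-\theta_0\|_2 \ge r\varepsilon_n \mid X^{(n)}) \le e^{-bn\varepsilon_n^2} \to 0$ almost surely, which is the claim.

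The main obstacle I anticipate is the bookkeeping of constants in the small-ball lower bound for $\Pi_n(W_n(\theta_0,\varepsilon_n))$: one must show it is at least $e^{-\text{const}\cdot n\varepsilon_n^2}$ \emph{without} a spurious $\log n$ factor, which forces a careful matching of the prior's coordinatewise scales (governed by the eigenvalues of $\Sigma_k^{-1}$, controlled by $(c)$) against the ball radius $\varepsilon_n$, and a correct use of $(b)$ to handle both the bias term $\theta_{0,(k)}\Sigma_k^{-1}\theta_{0,(k)}^T$ and the truncation tail simultaneously. Everything else — the Gaussian KL/Hellinger comparisons under $(a)$, the covering-number estimate, and the assembly into Corollary \ref{cor:0} — is routine.
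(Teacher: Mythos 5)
Your overall strategy (apply Corollary~\ref{cor:0} with the $\ell_2$ metric, verify inequality~(1) via the explicit Gaussian Hellinger computation under condition~$(a)$, bound the Hausdorff $\alpha$-constant via covering numbers on the $k$-dimensional support, and control the concentration ratio) is the same as the paper's, and the covering-number step using the volumetric bound on the $k$-dimensional support is fine. However, there is a genuine gap in the step where you lower-bound $\Pi_n\bigl(W_n(\theta_0,\varepsilon_n)\bigr)$.

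You claim the sharpened small-ball estimate $\Pi_n\bigl(W_n(\theta_0,\varepsilon_n)\bigr)\ge e^{-C''n\varepsilon_n^2}$, ``because the ball radius $\varepsilon_n$ and the per-coordinate prior standard deviations are of comparable order $n^{-1/2}$.'' This does not follow from the hypotheses. Condition~$(c)$ asserts only $\alpha\Sigma_k^{-1}\alpha^T\lesssim k\sum_i\alpha_i^2 i^{2\gamma}$, i.e.\ an \emph{upper} bound on $\Sigma_k^{-1}$ and hence a \emph{lower} bound on the eigenvalues of $\Sigma_k$. Nothing prevents a prior variance in some coordinate from being astronomically large; in that case the Gaussian normalizing constant $(2\pi)^{k/2}\sqrt{\det\Sigma_k}$ blows up and the prior mass of any ball of radius $\asymp\varepsilon_n$ is arbitrarily small, so $\Pi_n\bigl(W_n(\theta_0,\varepsilon_n)\bigr)$ cannot be bounded below as you claim. (Also, under $(c)$ the per-coordinate lower bounds range from $\asymp k^{-1/2}$ at $i=1$ to $\asymp n^{-1/2}$ at $i=k$, so the scales are not uniformly $n^{-1/2}$ even on the lower-bound side.)

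The paper sidesteps this: after showing $\Pi_n\bigl(W_n(\theta_0,\varepsilon_n)\bigr)\ge\Pi_n\bigl(\theta_{(k)}:\|\theta_{(k)}-\theta_{0,(k)}\|_2\le(2b_3)^{-1/2}\varepsilon_n\bigr)$, it never estimates either prior mass absolutely. Instead it writes the required bound as a \emph{ratio}
$$\frac{\Pi_n\bigl(\|\theta_{(k)}-\theta_{0,(k)}\|_2\le 3j\varepsilon_n\bigr)}{\Pi_n\bigl(\|\theta_{(k)}-\theta_{0,(k)}\|_2\le(2b_3)^{-1/2}\varepsilon_n\bigr)}$$
in which the normalizing constant $(2\pi)^{k/2}\sqrt{\det\Sigma_k}$ cancels. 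Bounding $\exp(-\tfrac12\theta_{(k)}\Sigma_k^{-1}\theta_{(k)}^T)\le 1$ in the numerator and by its minimum on the small ball in the denominator reduces the problem to the Euclidean volume ratio $(3j\sqrt{2b_3})^k$, which is $\le e^{(1/40)j^2n\varepsilon_n^2}$ for large $j$, times $\max_{\|\theta_{(k)}-\theta_{0,(k)}\|_2\le(2b_3)^{-1/2}\varepsilon_n}\exp(\tfrac12\theta_{(k)}\Sigma_k^{-1}\theta_{(k)}^T)$, which is bounded purely by the one-sided condition $(c)$ together with $(b)$. Your version discards the numerator prior mass (effectively replacing $\Pi_n(\text{shell})$ by $1$) and thereby loses the cancellation that makes the one-sided hypothesis sufficient. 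To repair your argument you would either need to add a matching upper bound on $\Sigma_k$ as a hypothesis (strengthening the theorem) or replace the absolute small-ball estimate with the paper's ratio-of-balls computation.

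Two smaller remarks: the aside that ``$H_*$ is controlled by the same quadratic form up to constants since the density ratio is bounded'' is not correct for Gaussians (the density ratio $p_{\theta}^{(n)}/p_{\theta_0}^{(n)}$ is unbounded); your fallback of computing $H_*$ directly is what the paper does and gives $1+\tfrac32H_*^2=\exp\bigl(\tfrac38(\theta_{0,(n)}-\theta_{(n)})\Sigma_{(n)}^{-1}(\theta_{0,(n)}-\theta_{(n)})^T\bigr)$. Also, $W_n$ involves $H_*$ only, not $V_{k,0}$ (which enters $B_n$ for the in-probability theorem), so that comparison is not needed here.
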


\begin{proof}For any $\alpha_1=(\theta_{1,1},\theta_{1,2},\dots,\theta_{1,n})$ and $\alpha_2=(\theta_{2,1},\theta_{2,2},\dots,\theta_{2,n})$ we have
$$H(p_{\alpha_1}^{(n)},p_{\alpha_2}^{(n)})^2=2-2\int_{\R^n} \sqrt{p_{\alpha_1}^{(n)}(x)p_{\alpha_2}^{(n)}(x)}\,dx=2-$$
$$-{2\over (2\pi)^{n/2}\sqrt{\det \Sigma_{(n)}}}\int_{\R^n} \exp\Big(-{1\over 4}\big((x-\alpha_1) \Sigma_{(n)}^{-1}(x-\alpha_1)^T +(x-\alpha_2) \Sigma_{(n)}^{-1}(x-\alpha_2)^T\big)\Big)dx,$$
where $x=(x_1,x_2,\dots,x_n)$ and
$$(x-\alpha_1) \Sigma_{(n)}^{-1}(x-\alpha_1)^T +(x-\alpha_2) \Sigma_{(n)}^{-1}(x-\alpha_2)^T$$
$$=2x\Sigma_{(n)}^{-1}x^T-2(\alpha_1+\alpha_2)\Sigma_{(n)}^{-1}x^T+\alpha_1\Sigma_{(n)}^{-1}\alpha_1^T+\alpha_2\Sigma_{(n)}^{-1}\alpha_2^T$$
$$=2(x-{\alpha_1\over 2}-{\alpha_2\over 2})\Sigma_{(n)}^{-1}(x-{\alpha_1\over 2}-{\alpha_2\over 2})^T-{1\over 2}(\alpha_1+\alpha_2)\Sigma_{(n)}^{-1}(\alpha_1+\alpha_2)^T+\alpha_1\Sigma_{(n)}^{-1}\alpha_1^T+\alpha_2\Sigma_{(n)}^{-1}\alpha_2^T$$
$$=2(x-{\alpha_1\over 2}-{\alpha_2\over 2})\Sigma_{(n)}^{-1}(x-{\alpha_1\over 2}-{\alpha_2\over 2})^T+{1\over 2}(\alpha_1-\alpha_2)\Sigma_{(n)}^{-1}(\alpha_1-\alpha_2)^T$$
$$\geq 2(x-{\alpha_1\over 2}-{\alpha_2\over 2})\Sigma_{(n)}^{-1}(x-{\alpha_1\over 2}-{\alpha_2\over 2})^T+b_1\,n\,||\alpha_1-\alpha_2||_2^2$$
for some positive constant $b_1$ independent of $\alpha_1,\,\alpha_2$, where the last inequality follows from condition $(a)$. Hence we get
$$H(p_{\alpha_1}^{(n)},p_{\alpha_2}^{(n)})^2\geq 2-2\,e^{-{b_1\over 4}\,n\,||\alpha_1-\alpha_2||_2^2},$$
which implies that the norm $2^{-1}b_1||\cdot||_2$ satisfies the inequality (1). So Corollary \ref{cor:0} can be applied for the metric $2^{-1}b_1||\cdot||_2$ and for constants $\alpha=1/2$ and $\delta=1/4$.

It follows from condition $(b)$ that $||\theta_{(k)}-\theta_0||_2^2=\sum_{i=1}^k(\theta_i-\theta_{0,i})^2+\sum_{i=k+1}^\infty\theta_{0,i}^2
\leq ||\theta_{(k)}-\theta_{0,(k)}||_2^2+k^{-2\gamma}\sum_{i=k+1}^\infty\theta_{0,i}^2i^{2\gamma}= ||\theta_{(k)}-\theta_{0,(k)}||_2^2+{\rm O}(\varepsilon_n^2),$ where $\theta_{(k)}=(\theta_1,\dots,\theta_k)$ and $\theta_{0,(k)}=(\theta_{0,1},\dots,\theta_{0,k}).$ This implies that for each large $j$,
$$C({1\over 4} j\varepsilon_n,\{\theta_{(k)}: j\varepsilon_n < {b_1\over 2}||\theta_{(k)}-\theta_0||_2 \leq 2j\varepsilon_n\},{1\over 2},{b_1\over 2}||\cdot||_2)$$
$$\leq C({1\over 5} j\varepsilon_n,\{\theta_{(k)}: ||\theta_{(k)}-\theta_{0,(k)}||_2 \leq 3j\varepsilon_n\},{1\over 2},||\cdot||_2),$$
which by Lemma 1 in Xing and Ranneby \cite{xir1} does not exceed
$$\Pi_n(\theta_{(k)}: ||\theta_{(k)}-\theta_{0,(k)}||_2 \leq 3j\varepsilon_n)^{1\over 2}
N({1\over 5} j\varepsilon_n,\{\theta_{(k)}: ||\theta_{(k)}-\theta_{0,(k)}||_2 \leq 3j\varepsilon_n\},||\cdot||_2)^{1\over 2}$$
$$\leq \Pi_n(\theta_{(k)}: ||\theta_{(k)}-\theta_{0,(k)}||_2 \leq 3j\varepsilon_n)^{1\over 2}\,b_2^k
$$
$$\leq \Pi_n(\theta_{(k)}: ||\theta_{(k)}-\theta_{0,(k)}||_2 \leq 3j\varepsilon_n)^{1\over 2}\,e^{{1\over 40}j^2n\varepsilon_n^2}
$$
for some constant $b_2>1$ and all large $j$, $n$, where we have applied Lemma 4.1 in Pollard \cite{pol}. It remains to prove that for large $j$ and $n$,
$$\Pi_n(\theta_{(k)}: ||\theta_{(k)}-\theta_{0,(k)}||_2 \leq 3j\varepsilon_n)\leq e^{{1\over 20}j^2n\varepsilon_n^2}\, \Pi_n\bigl( W_n(\theta_0,\varepsilon_n) \bigr).$$
By the proof of Lemma 1 in Xing \cite{xi1} we have
$$1+{3\over 2}\,H_*(p_{\theta_{0,(n)}}^{(n)},p_{\theta_{(n)}}^{(n)})^2=E_{\theta_{0,(n)}}\sqrt{p_{\theta_{0,(n)}}^{(n)}\Big/ p_{\theta_{(n)}}^{(n)}}={1\over (2\pi)^{n/2}\sqrt{\det \Sigma_{(n)}}}$$
$$\int_{\R^n} \exp\Big(-{3\over 4}(x-\theta_{0,(n)}) \Sigma_{(n)}^{-1}(x-\theta_{0,(n)})^T +{1\over 4}(x-\theta_{(n)}) \Sigma_{(n)}^{-1}(x-\theta_{(n)})^T\Big)dx.$$
Write
$$-{3\over 4}(x-\theta_{0,(n)}) \Sigma_{(n)}^{-1}(x-\theta_{0,(n)})^T +{1\over 4}(x-\theta_{(n)}) \Sigma_{(n)}^{-1}(x-\theta_{(n)})^T$$
$$=-{1\over 2}(x-\theta_{0,(n)}) \Sigma_{(n)}^{-1}(x-\theta_{0,(n)})^T+{1\over 2}(\theta_{0,(n)}-\theta_{(n)}) \Sigma_{(n)}^{-1}x^T  $$$$-{1\over 4}\theta_{0,(n)} \Sigma_{(n)}^{-1}\theta_{0,(n)}^T+{1\over 4}\theta_{(n)} \Sigma_{(n)}^{-1}\theta_{(n)}^T$$
$$=-{1\over 2}(x-\theta_{0,(n)}) \Sigma_{(n)}^{-1}(x-\theta_{0,(n)})^T+{1\over 2}(\theta_{0,(n)}-\theta_{(n)}) \Sigma_{(n)}^{-1}(x-\theta_{0,(n)})^T  $$$$+{1\over 4}(\theta_{0,(n)}-\theta_{(n)}) \Sigma_{(n)}^{-1}(\theta_{0,(n)}-\theta_{(n)})^T$$
$$=-{1\over 2}\big((x-\theta_{0,(n)}) \Sigma_{(n)}^{-1}(x-\theta_{0,(n)})^T-(\theta_{0,(n)}-\theta_{(n)}) \Sigma_{(n)}^{-1}(x-\theta_{0,(n)})^T  $$$$+{1\over 4}(\theta_{0,(n)}-\theta_{(n)}) \Sigma_{(n)}^{-1}(\theta_{0,(n)}-\theta_{(n)})^T\big)+{3\over 8}(\theta_{0,(n)}-\theta_{(n)}) \Sigma_{(n)}^{-1}(\theta_{0,(n)}-\theta_{(n)})^T$$
$$=-{1\over 2}(x-{3\over 2}\theta_{0,(n)}+{1\over 2}\theta_{(n)}) \Sigma_{(n)}^{-1}(x-{3\over 2}\theta_{0,(n)}+{1\over 2}\theta_{(n)})^T+{3\over 8}(\theta_{0,(n)}-\theta_{(n)}) \Sigma_{(n)}^{-1}(\theta_{0,(n)}-\theta_{(n)})^T.$$
Hence we obtain
$$1+{3\over 2}\,H_*(p_{\theta_{0,(n)}}^{(n)},p_{\theta_{(n)}}^{(n)})^2=\exp\Big({3\over 8}(\theta_{0,(n)}-\theta_{(n)}) \Sigma_{(n)}^{-1}(\theta_{0,(n)}-\theta_{(n)})^T\Big).$$
It then follows from condition $(a)$ that there exists a positive constant $b_3$ not depending on $n$ such that
$$1+{3\over 2}\,H_*(p_{\theta_{0,(n)}}^{(n)},p_{\theta_{(n)}}^{(n)})^2\leq e^{{3\over 2}b_3\,\,n\,||\theta_{0,(n)}-\theta_{(n)}||_2^2}.$$
The constant $c$ is now chosen so largely that
$b_3||\theta_{(k)}-\theta_{0,(n)}||_2^2\leq b_3||\theta_{(k)}-\theta_{0,(k)}||_2^2+2^{-1}\varepsilon_n^2.$ Since the support $\Pi_n$ is $\{(\theta_1,\theta_2,\dots):\theta_l=0\ {\rm for}\ l\geq k+1\}$, we get
$$\Pi_n\bigl( W_n(\theta_0,\varepsilon_n)\bigr)\geq
\Pi_n(\theta_{(k)}: ||\theta_{(k)}-\theta_{0,(k)}||_2\leq (2b_3)^{-1/2}\varepsilon_n)$$
and hence
$${\Pi_n(\theta_{(k)}: ||\theta_{(k)}-\theta_{0,(k)}||_2 \leq 3j\varepsilon_n)\over \Pi_n\bigl( W_n(\theta_0,\varepsilon_n) \bigr)} \leq {\Pi_n(\theta_{(k)}: ||\theta_{(k)}-\theta_{0,(k)}||_2 \leq 3j\varepsilon_n)\over \Pi_n(\theta_{(k)}: ||\theta_{(k)}-\theta_{0,(k)}||_2 \leq (2b_3)^{-1/2}\varepsilon_n)}$$
$$={\int_{ ||\theta_{(k)}-\theta_{0,(k)}||_2 \leq 3j\varepsilon_n} \exp\big(-{1\over 2}\theta_{(k)} \Sigma_{k}^{-1}\theta_{(k)}^T \big)d\theta_{(k)}\over \int_{ ||\theta_{(k)}-\theta_{0,(k)}||_2 \leq (2b_3)^{-1/2}\varepsilon_n} \exp\big(-{1\over 2}\theta_{(k)} \Sigma_{k}^{-1}\theta_{(k)}^T \big)d\theta_{(k)}}$$
$$\leq{\int_{ ||\theta_{(k)}-\theta_{0,(k)}||_2 \leq 3j\varepsilon_n} d\theta_{(k)}\over \min\limits_{ ||\theta_{(k)}-\theta_{0,(k)}||_2 \leq (2b_3)^{-1/2}\varepsilon_n}
\exp\big(-{1\over 2}\theta_{(k)} \Sigma_{k}^{-1}\theta_{(k)}^T \big)\int_{ ||\theta_{(k)}-\theta_{0,(k)}||_2 \leq (2b_3)^{-1/2}\varepsilon_n}d\theta_{(k)}}$$
$$=\max\limits_{ ||\theta_{(k)}-\theta_{0,(k)}||_2 \leq (2b_3)^{-1/2}\varepsilon_n}
\exp\big({1\over 2}\theta_{(k)} \Sigma_{k}^{-1}\theta_{(k)}^T \big){ (3j)^k\over \big( (2b_3)^{-1/2}\big)^k}$$
$$\leq e^{{1\over 40}j^2n\varepsilon_n^2}\max\limits_{ ||\theta_{(k)}-\theta_{0,(k)}||_2 \leq (2b_3)^{-1/2}\varepsilon_n}
\exp\big({1\over 2}\theta_{(k)} \Sigma_{k}^{-1}\theta_{(k)}^T \big)$$
for all large $j$ and $n$. On the other hand, it turns out from condition (c) that there exists $b_4>0$ such that
for any $\theta_{(k)}=(\theta_1,\dots,\theta_k)$ with $||\theta_{(k)}-\theta_{0,(k)}||_2 \leq (2b_3)^{-1/2}\varepsilon_n$, we have
$$\exp\big({1\over 2}\theta_{(k)} \Sigma_{k}^{-1}\theta_{(k)}^T \big)\leq \exp\big(b_4k\sum_{i=1}^k\theta_i^2i^{2\gamma} \big)$$$$\leq \exp\big(2b_4k\sum_{i=1}^k(\theta_i-\theta_{0,i})^2i^{2\gamma}+2b_4k\sum_{i=1}^k\theta_{0,i}^2i^{2\gamma} \big)$$
$$\leq \exp\big(2b_4k^{2\gamma+1}\sum_{i=1}^k(\theta_i-\theta_{0,i})^2+2b_4k\sum_{i=1}^\infty\theta_{0,i}^2i^{2\gamma} \big)$$
$$\leq \exp\big(b_4b_3^{-1}k^{2\gamma+1}\varepsilon_n^2+2b_4k\sum_{i=1}^\infty\theta_{0,i}^2i^{2\gamma} \big)\leq e^{{1\over 40}j^2n\varepsilon_n^2}$$
for all large $j$ and $n$, where the second inequality follows from the inequality $(s+t)^2\leq 2s^2+2t^2$ for all $s,t\in\R$. Therefore, we have proved the required inequality and the proof of Theorem \ref{thm:9} is complete.

\end{proof}

\bigskip

\noindent{\it 4.3. Prior based on uniform distributions.} Assume, just as in Section 3.1, that
$(X_1,X_2,\dots,X_n)$ of independent variables $X_i$ has a density $\prod_{i=1}^np_{\theta,i}(x_i)$ relative to the product measure $\mu_1\times\mu_2\times\dots\times\mu_n$ on $\mathfrak{X}_1\times \mathfrak{X}_2\times\dots\times \mathfrak{X}_n$. We follow the notations of Section 3.1.
By means of the componentwise Hellinger upper bracketing numbers for $\Theta$, Ghosal and van der Vaart \cite{ghv1} have obtained an in-probability convergence rate theorem for priors based on discrete distributions. Their result can be extended to an almost sure assertion in terms of Theorem \ref{thm:4}. In the following we give an almost sure result for priors based on uniform distributions, which gives us an opportunity to adopt the average Hellinger metric $d_n^0(\theta_1,\theta_2)=\big({1\over n}\sum_{i=1}^nH_i(p_{\theta_1,i},p_{\theta_2,i})^2\big)^{1/2}$ instead of the componentwise Hellinger upper bracketing numbers. This also extends a result for i.i.d. observations given by Xing (\cite{xi1}, Section 3.2).

Let $c>1$ and let $\bar{d}_n$ be metrics on $\Theta$. Assume that $\Theta_{c,n}$ for $n=1,2\dots$ are subsets of $ \Theta$ such that ${1\over c^2\, n}\sum_{i=1}^nH_{*,i}(p_{\theta_1,i},p_{\theta_2,i})^2\leq \bar{d}_n(\theta_1,\theta_2)^2$
for all $\theta_1,\theta_2\in \Theta_{c,n}$. By the definition of $H_{*,i}$ we have $d_n^0\leq \sqrt{3}c\,\bar{d}_n$ on $\Theta_{c,n}$.
Note that $\bar{d}_n$ can be taken as a constant multiple of $d_n^0$ in the case that $H_{*,i}(p_{\theta_1,i},p_{\theta_2,i})\lesssim H_i(p_{\theta_1,i},p_{\theta_2,i})$ for all $\theta_1,\theta_2$ in $\Theta$ and $i=1,2,\dots,n$.
Given  $\varepsilon_n>0$, we assume that $\{B_1,\dots,B_{K_n}\}$ is a partition of $\Theta_{c,n}$ such that for each $B_i$ there exists $b_i$ in $\Theta$ with $B_i\subset \{\theta\in \Theta_{c,n}:\, \bar{d}_n(b_i,\theta)\leq \varepsilon_n/2\,c\}$. Let $\Pi_n$ be a prior distribution supported on $\Theta_{c,n}$ such that $\Pi_n(B_i)=1/K_n$ for $i=1,2,\dots,K_n$. Corollary \ref{cor:3} implies the following result.

\begin{thm}\label{thm:8} Suppose that $\theta_0\in \Theta_{c,n}$ for all $n$ and suppose that $\log K_n+\log n=O(n\,\varepsilon_n^2)$ as $n\to\infty$. Then for each large $r$,
$$\Pi_n\bigl(\theta\in \Theta:\,d_n^0(\theta,\theta_0)\geq r\,\varepsilon_n|X_1,X_2,\dots,X_n\bigr)\longrightarrow 0 $$
almost\ surely as $n\to\infty$.
\end{thm}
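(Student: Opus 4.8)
The plan is to derive Theorem~\ref{thm:8} directly from Corollary~\ref{cor:3}, taking the sieve $\Theta_n$ to be the support $\Theta_{c,n}$ of the prior $\Pi_n$. With this choice $\Pi_n(\Theta\setminus\Theta_n)=0$, so condition (iii) of Corollary~\ref{cor:3} holds trivially (the series is identically zero), and only the local metric-entropy condition (i) and the prior-mass condition (ii) remain to be checked. From the hypothesis $\log K_n+\log n=O(n\varepsilon_n^2)$ fix a constant $M>0$ with $\log K_n\le M n\varepsilon_n^2$ and $\log n\le M n\varepsilon_n^2$ for all large $n$; then $n\varepsilon_n^2\ge c_0\log n$ holds with $c_0=1/M$. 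I would take $\delta=\tfrac{1}{4}$, $c_1=c_2=\tfrac{1}{20}$ so that $c_1+c_2=\tfrac{1}{10}<\tfrac{1}{8}=\tfrac{1}{2}(1-2\delta)^2$, and $c_3=M+1>1/c_0$.

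For condition (i), I would use the blocks of the partition $\{B_1,\dots,B_{K_n}\}$ themselves as a cover. Since each $B_i$ lies in a $\bar{d}_n$-ball of radius $\varepsilon_n/2c$, the triangle inequality gives $\bar{d}_n(\theta,\theta')\le\varepsilon_n/c$ for $\theta,\theta'\in B_i$, and hence, by the inequality $d_n^0\le\sqrt{3}\,c\,\bar{d}_n$ on $\Theta_{c,n}$ recorded just before the theorem, the $d_n^0$-diameter of each $B_i$ is at most $\sqrt{3}\,\varepsilon_n$. Consequently, for every integer $j\ge\sqrt{3}/\delta$ each nonempty $B_i$ is contained in a $d_n^0$-ball of radius $\delta j\varepsilon_n$, so the set $\{\theta\in\Theta_n:\,j\varepsilon_n<d_n^0(\theta,\theta_0)\le2j\varepsilon_n\}$ is covered by at most $K_n$ such balls; since $\log K_n\le M n\varepsilon_n^2\le c_1 j^2 n\varepsilon_n^2$ whenever $j\ge\sqrt{M/c_1}$, condition (i) holds for all large $j$ and $n$.

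For condition (ii), note that $\theta_0\in\Theta_{c,n}$ lies in exactly one block, say $B_{i_0}$. For any $\theta\in B_{i_0}$ the triangle inequality gives $\bar{d}_n(\theta_0,\theta)\le\varepsilon_n/c$, whence the defining inequality of $\Theta_{c,n}$ yields $\tfrac{1}{n}\sum_{i=1}^n H_{*,i}(p_{\theta_0,i},p_{\theta,i})^2\le c^2\,\bar{d}_n(\theta_0,\theta)^2\le\varepsilon_n^2$, that is, $\theta\in\overline{W}_n(\theta_0,\varepsilon_n)$. Thus $B_{i_0}\subseteq\overline{W}_n(\theta_0,\varepsilon_n)$ and $\Pi_n(\overline{W}_n(\theta_0,\varepsilon_n))\ge\Pi_n(B_{i_0})=1/K_n$. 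Since the left-hand side of (ii) is a probability and therefore at most $1$, condition (ii) reduces to $1\le e^{c_2 j^2 n\varepsilon_n^2}/K_n$, which again holds once $j\ge\sqrt{M/c_2}$. With (i)--(iii) in place, Corollary~\ref{cor:3} furnishes a constant $b>0$ such that $\Pi_n(\theta\in\Theta:\,d_n^0(\theta,\theta_0)\ge r\varepsilon_n\,|\,X_1,\dots,X_n)\le e^{-b n\varepsilon_n^2}$ almost surely for each large $r$ and all large $n$, and this tends to $0$ because $n\varepsilon_n^2\ge\tfrac{1}{M}\log n\to\infty$.

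I do not anticipate any genuine difficulty here: the argument is essentially a bookkeeping verification of the hypotheses of Corollary~\ref{cor:3}. The only step requiring a little thought is realizing that one should take the sieve $\Theta_n$ to equal the prior's support $\Theta_{c,n}$, which annihilates the complement term and collapses both the metric-entropy and prior-mass conditions to the single estimate $K_n\le e^{\mathrm{const}\cdot j^2 n\varepsilon_n^2}$ supplied by $\log K_n+\log n=O(n\varepsilon_n^2)$; one must also keep in mind that $H_*$ is not symmetric, but this causes no trouble since the defining inequality of $\Theta_{c,n}$ is postulated for all ordered pairs $(\theta_1,\theta_2)$.
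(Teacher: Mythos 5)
Your proof is correct and follows essentially the same route as the paper: take the sieve $\Theta_n=\Theta_{c,n}$ (so (iii) is vacuous), use the partition blocks $\{B_i\}$ both to bound the local covering numbers by $K_n$ and to bound $\Pi_n(\overline{W}_n(\theta_0,\varepsilon_n))$ from below by $\Pi_n(B_{i_0})=1/K_n$, and close with $\log K_n=O(n\varepsilon_n^2)$. If anything, your bookkeeping is slightly cleaner than the paper's: by bounding the $d_n^0$-diameter of each block directly (and only for $\theta,\theta'\in\Theta_{c,n}$, where the comparison $d_n^0\le\sqrt3\,c\,\bar d_n$ is actually available), you sidestep an intermediate covering-number inequality in the paper that, as written, appears to compare radii in the wrong direction; the conclusion is the same either way since condition (i) of Corollary~\ref{cor:3} is only required for large $j$.
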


\begin{proof} Take $\Theta_n=\Theta_{c,n}$ for all $n$. Then condition (iii) of Corollary \ref{cor:3} is trivially fulfilled.
For $\delta=1/(2\sqrt{3}c^2)$ we have that for any given $c_1>0$ and all large $j$ and $n$,
$$N\big(\delta j\varepsilon_n,\{\theta\in\Theta_n: j\varepsilon_n < d_n^0(\theta,\theta_0)\leq 2j\varepsilon_n\},d_n^0\big)\leq N\big( {\varepsilon_n\over 2\sqrt{3}c^2},\Theta_n,d_n^0\big)$$
$$\leq N\big( {\varepsilon_n\over 2c},\Theta_n,\bar{d}_n\big)\leq K_n \leq e^{c_1j^2n\varepsilon_n^2},$$  where the last inequality follows from $\log K_n=O(n\,\varepsilon_n^2)$. This implies condition (i) of Corollary \ref{cor:3}. To see condition (ii), by $\theta_0\in \Theta_{c,n}$ we can take $b_{i_0}\in \Theta$ such that $\bar{d}_n(b_{i_0},\theta_0)\leq \varepsilon_n/2c$.
Then, for all $\theta\in B_{i_0}$ we have  $${1\over n}\sum_{i=1}^nH_{*,i}(p_{\theta_0,i},p_{\theta,i})^2\leq c^2\bar{d}_n(\theta_0,\theta)^2\leq c^2\big(\bar{d}_n(\theta_0,b_{i_0})+\bar{d}_n(b_{i_0},\theta)\big)^2\leq \varepsilon_n^2,$$ which implies that $\overline{W}_n(\theta_0,\varepsilon_n)$ contains the whole set $B_{i_0}$ and hence
$\Pi_n\bigl( \overline{W}_n(\theta_0,\varepsilon_n) \bigr)\geq \Pi_n( B_{i_0})=1/K_n \geq e^{-c_2j^2n\varepsilon_n^2}$ for any given $c_2>0$ and all large $j$ and $n$. So we have verified condition (ii) and the proof of Theorem \ref{thm:8} is complete.
\end{proof}

\noindent{\it Example} (Nonparametric Poisson regression) Assume that $U\geq L>0$ are two given constants. We consider
Poisson distributed independent random variables $X_1,X_2,\dots,X_n$ with parameters $\theta(z_1),\ \theta(z_2),\ \dots,\theta(z_n)$, where $\theta:\R\to [L,U]$ is an unknown increasing link function and $z_1,z_2,\dots,z_n$ are one-dimensional covariates. The joint mass function of $(X_1,X_2,\dots,X_n)$ is given by $\prod\limits_{i=1}^n p_{\theta,i}(x_i)$ with $ p_{\theta,i}(x_i)=e^{-\theta(z_i)}{\theta(z_i)^{x_i}\over x_i!}.$
For $a,b\in  [L,U]$ we have
$$\sum_{x=0}^\infty \Big(\ \sqrt{e^{-a}{a^x\over x!}}-\sqrt{e^{-b}{b^x\over x!}}\ \Big)^2\Big({2\over 3}\sqrt{e^{-a}{a^x\over x!}\over e^{-b}{b^x\over x!}}+{1\over 3}\Big)$$
$$= \sum_{x=0}^\infty {( e^{-{a\over 2}}a^{x\over 2}-e^{-{b\over 2}}b^{x\over 2} )^2 \over x!} \Big({2\over 3}\sqrt{e^{b-a}a^x\over b^x}+{1\over 3}\Big)$$
$$\leq (a-b)^2e^{-L}\sum_{x=0}^\infty {(U^{x\over 2}+xU^{{x\over 2}-1})^2\over x!} \Big({2\over 3}\sqrt{e^{U-L}U^x\over L^x}+{1\over 3}\Big)$$
$$\leq (a-b)^2e^{U-3L\over 2}\sum_{x=0}^\infty {(U^{x\over 2}+xU^{{x\over 2}-1})^2\over x!} \Big({U\over L}\Big)^{x\over 2}\lesssim (a-b)^2,$$
where the first inequality follows from the inequality
$| e^{-{a\over 2}}a^{x\over 2}-e^{-{b\over 2}}b^{x\over 2}|\leq |a-b|e^{-{L\over 2}}(U^{x\over 2}+xU^{{x\over 2}-1})$ for all $a,b\in [L,U]$. This implies that ${1\over  n}\sum_{i=1}^nH_{*,i}(p_{\theta_1,i},p_{\theta_2,i})^2\lesssim \int(\theta_1-\theta_2)^2d\mathbb{P}_n^z$ for all link functions $\theta_1$ and $\theta_2$, where $\mathbb{P}_n^z=n^{-1}\sum_{i=1}^n\delta_{z_i}$ denotes the empirical distribution of $z_1,z_2,\dots,z_n$. So one can use the $L_2(\mathbb{P}_n^z)$-matric to produce the  partition $\{B_1,\dots,B_{K_n}\}$ of the space of link functions. By Theorem 2.7.5 of \cite{vaw} we know that $\log K_n\lesssim \varepsilon_n^{-1}$. Letting $\varepsilon_n^{-1}=n\varepsilon_n^2$ we obtain $\varepsilon_n=n^{-1/3}$, and hence by Theorem \ref{thm:8} the posterior based on uniform distributions converges almost surely at the rate $\varepsilon_n=n^{-1/3}$ with respect to the metric $d_n^0$, which is the minimax rate for this model. The in-probability convergence rate $n^{-1/3}$ for the posterior based on discrete distributions has been obtained in Section 7.1.1 of Ghosal and van der Vaart \cite{ghv1}.

It is worth pointing out that in this example the suprenorm $||p_{\theta_1,i}/p_{\theta_2,i}||_\infty$ may not be finite. Therefore, the approach on determination of prior concentration rates by means of $H(p_{\theta_1,i},p_{\theta_2,i})\,||p_{\theta_1,i}/p_{\theta_2,i}||_\infty$ in Ghosal, Ghosh and van der Vaart \cite{ghg} fails to be applied in this case, but the modified Hellinger distance $H_*(p_{\theta_1,i},p_{\theta_2,i})$ works well. A similar argument holds even for the infinite-dimensional normal model.

\section{Appendix}
\begin{proof}[Proof of Proposition \ref{prop:1}.]
Given $\delta>1$, by the definition of the Hausdorff $\alpha$-constant and Assumption \ref{ass:1}, there exist pairwise disjoint subsets $B_1,B_2,\dots,B_{N_n}$ of $\Theta_1$ such that (1) $\cup_{k=1}^{N_n}B_k=\{\theta\in \Theta_1:\, d_n(\theta,\theta_0)>\varepsilon\}$; (2) each $B_k$ is contained in some ball of $e_n$-radius not exceeding $\varepsilon$; (3) $\sum_{k=1}^{N_n}\Pi_n(B_k)^\alpha\leq \delta C(\varepsilon,\{\theta\in \Theta_1:\, d_n(\theta,\theta_0)>\varepsilon\},\alpha,e_n);$ (4) there exist test functions $\phi_k$ such that
$P_{\theta_0}^{(n)}\phi_k\leq e^{-Kn\varepsilon^2}$ and $ P_{\theta}^{(n)}\phi_k\geq 1-e^{-Kn\varepsilon^2}$ for all $\theta$ in $B_k$.
Then by the inequality $(x+y)^\alpha\leq x^\alpha+y^\alpha$ for all $x,y\geq 0$, we get
$$P_{\theta_0}^{(n)}\Big(\int_{\theta\in \Theta_1:\,d_n(\theta,\theta_0)> \varepsilon}R_\theta^{(n)}(X^{(n)}) \Pi_n(d\theta)\Big)^\alpha\leq  \sum_{k=1}^{N_n}P_{\theta_0}^{(n)}\Big( \int_{B_k}R_\theta^{(n)}(X^{(n)}) \Pi_n(d\theta)\Big)^\alpha$$
$$ \leq \sum_{k=1}^{N_n}
P_{\theta_0}^{(n)}\Bigl\{\phi_k^{1-\alpha}\Big(\int_{B_k}R_\theta^{(n)}(X^{(n)})\, \Pi_n(d\theta)\Big)^\alpha\Bigr\}
$$
$$+ \sum_{k=1}^{N_n}P_{\theta_0}^{(n)}\Bigl\{(1-\phi_k)^{1-\alpha}\Big(\int_{B_k}R_\theta^{(n)}(X^{(n)}) \Pi_n(d\theta)\Big)^\alpha\Bigr\} :=L_1+L_2.$$
It turns out from H\"older's inequality and Fubini's theorem that
$$ L_1\leq \sum_{k=1}^{N_n} \bigl(P_{\theta_0}^{(n)}\phi_k\bigr)^{1-\alpha}\Big(P_{\theta_0}^{(n)}\int_{B_k}R_\theta^{(n)}(X^{(n)}) \Pi_n(d\theta)\Big)^\alpha$$
$$\leq e^{-(1-\alpha)Kn\varepsilon^2}\sum_{k=1}^{N_n}\Big(\int_{B_k}P_{\theta_0}^{(n)}R_\theta^{(n)}(X^{(n)}) \Pi_n(d\theta)\Big)^\alpha= e^{-(1-\alpha)Kn\varepsilon^2}\sum_{k=1}^{N_n} \Pi_n(B_k)^\alpha $$
$$\leq \delta e^{-(1-\alpha)Kn\varepsilon^2}C(\varepsilon,\{\theta\in \Theta_1:\, d_n(\theta,\theta_0)>\varepsilon\},\alpha,e_n).$$
To estimate $L_2$, we deal with $1/2\leq \alpha<1$ and $0<\alpha<1/2$ separately. In the case of $1/2\leq \alpha<1$ we have  $0\leq (2\alpha-1)/\alpha<1$ and by H\"older's inequality,
$$L_2= \sum_{k=1}^{N_n}P_{\theta_0}^{(n)}\Bigl\{(1-\phi_k)^{1-\alpha}\Big(\int_{B_k}R_\theta^{(n)}(X^{(n)}) \Pi_n(d\theta)\Big)^{1-\alpha}$$
$$\cdot\Big(\int_{B_k}R_\theta^{(n)}(X^{(n)}) \Pi_n(d\theta)\Big)^{2\alpha-1}\Bigr\}$$
$$\leq \sum_{k=1}^{N_n}\Bigl\{P_{\theta_0}^{(n)}\Big((1-\phi_k)\int_{B_k}R_\theta^{(n)}(X^{(n)}) \Pi_n(d\theta)\Big)\Bigr\}^{1-\alpha}$$
$$\cdot \Bigl\{P_{\theta_0}^{(n)}\Big(\int_{B_k}R_\theta^{(n)}(X^{(n)}) \Pi_n(d\theta)\Big)^{2\alpha-1\over \alpha}\Bigr\}^\alpha$$
$$\leq  \sum_{k=1}^{N_n}\Bigl(\int_{B_k}P_{\theta}^{(n)}(1-\phi_k)\, \Pi_n(d\theta) \Bigr)^{1-\alpha}\Bigl(P_{\theta_0}^{(n)}\int_{B_k}R_\theta^{(n)}(X^{(n)}) \Pi_n(d\theta)\Bigr)^{2\alpha-1}$$
$$\leq \sum_{k=1}^{N_n}
e^{-(1-\alpha)Kn\varepsilon^2}\Pi_n(B_k)^{1-\alpha}\Pi_n(B_k)^{2\alpha-1}= e^{-(1-\alpha)Kn\varepsilon^2}\sum_{k=1}^{N_n}
\Pi_n(B_k)^\alpha$$
$$\leq \delta e^{-(1-\alpha)Kn\varepsilon^2}C(\varepsilon,\{\theta\in \Theta_1:\, d_n(\theta,\theta_0)>\varepsilon\},\alpha,e_n).$$
In the case of $0<\alpha< 1/2$ we have $0\leq (1-\phi_k)^{1-\alpha}\leq (1-\phi_k)^\alpha\leq 1$ and hence by H\"older's inequality,
$$L_2\leq \sum_{k=1}^{N_n}P_{\theta_0}^{(n)}\Bigl\{\Big((1-\phi_k)\int_{B_k}R_\theta^{(n)}(X^{(n)}) \Pi_n(d\theta)\Big)^\alpha\Bigr\}$$
$$\leq  \sum_{k=1}^{N_n}\Bigl\{P_{\theta_0}^{(n)}\Big((1-\phi_k)\int_{B_k}R_\theta^{(n)}(X^{(n)}) \Pi_n(d\theta)\Big)\Bigr\}^\alpha$$
$$= \sum_{k=1}^{N_n}\Bigl(\int_{B_k}P_{\theta}^{(n)}(1-\phi_k) \Pi_n(d\theta)\Big)^\alpha\leq e^{-\alpha Kn\varepsilon^2}\sum_{k=1}^{N_n}
\Pi_n(B_k)^\alpha$$
$$\leq \delta e^{-\alpha Kn\varepsilon^2}C(\varepsilon,\{\theta\in \Theta_1:\, d_n(\theta,\theta_0)>\varepsilon\},\alpha,e_n).$$
Thus for any $0<\alpha<1$ we have obtained the required inequality for $K_1=2\delta$ and $K_2=\alpha\,K$ if $0<\alpha<1/2$ and $K_2=(1-\alpha)\,K$ if $1/2\leq \alpha<1$. Finally, letting $\delta\searrow 1$, we conclude the proof of Proposition \ref{prop:1}.
\end{proof}

\begin{proof}[Proof of Proposition \ref{prop:2}.] Take nonempty disjoint subsets $B_j,\ j=1,2,\dots,N$, of $\Theta$ such that $\sum_{j=1}^N \Pi(B_j)^\alpha\leq 2\,C(\delta\,\varepsilon,\{\theta\in \Theta_1:\,d_n^1(\theta,\theta_0)> \varepsilon\},\alpha,d_n^1)$, $\cup_{j=1}^N B_j= \{\theta\in \Theta_1:\,d_n^1(\theta,\theta_0)> \varepsilon\}$ and $d_n^1$-diameters of all $B_j$ do not exceed  $2\,\delta\,\varepsilon$. Then we have
$$P_{\theta_0}^{(n)}\,\Big(\int_{\theta\in \Theta_1:\,d_n^1(\theta,\theta_0)> \varepsilon}R_\theta^{(n)}(X^{(n)})\, \Pi_n(d\theta)\Big)^\alpha$$
$$\leq P_{\theta_0}^{(n)}\,\sum_{j=1}^N \Big(\int_{B_j}R_\theta^{(n)}(X^{(n)})\, \Pi_n(d\theta)\Big)^\alpha=\sum_{j=1}^N \Pi_n(B_j)^\alpha\, P_{\theta_0}^{(n)}\Big({I_j(X^{(n)})\over p_{\theta_0}^{(n)}(X^{(n)})}\Big)^\alpha$$
$$\leq 2\,C(\delta\,\varepsilon,\{\theta\in \Theta_1:\,d_n^1(\theta,\theta_0)> \varepsilon\},\alpha,d_n^1)\,
\max_{1\leq j\leq N}\, P_{\theta_0}^{(n)}\Big({I_j(X^{(n)})\over p_{\theta_0}^{(n)}(X^{(n)}) }\Big)^\alpha,$$
where $I_j(X^{(n)})=\Pi_n(B_j)^{-1}\int_{B_j}p_{\theta}^{(n)}(X^{(n)})\, \Pi_n(d\theta)$ is the integral mean of the likelihood $p_{\theta}^{(n)}(X^{(n)})$ and hence is a density function. With a slight abuse of notation we also let $I_j$ stand for the corresponding parameter of this integral means.
Take $\theta_j\in B_j$ for each $j$.
By Jensen's inequality for $d_n^1(\cdot,\theta_j)^s$ we have  $d_n^1(I_j,\theta_j)\leq 2\delta\varepsilon$ and thus
$d_n^1(I_j,\theta_0)\geq d_n^1(\theta_j,\theta_0)-d_n^1(I_j,\theta_j)\geq (1-2\delta)\,\varepsilon$.
Take an nonnegative integer $m$ with ${\alpha\over 1-\alpha}\leq 2^m<{2\alpha\over 1-\alpha}.$
From H\"older's inequality it turns out that for each $j$,
$$P_{\theta_0}^{(n)}\Big({I_j(X^{(n)})\over p_{\theta_0}^{(n)}(X^{(n)})}\Big)^\alpha=P_{\theta_0}^{(n)}\biggl(\Big({I_j(X^{(n)})\over p_{\theta_0}^{(n)}(X^{(n)})}\Big)^{\alpha\over 2}\,\Big({I_j(X^{(n)})\over p_{\theta_0}^{(n)}(X^{(n)})}\Big)^{\alpha\over 2}\biggr)$$
$$\leq \biggl(P_{\theta_0}^{(n)}\Big({I_j(X^{(n)})\over p_{\theta_0}^{(n)}(X^{(n)})}\Big)^{{\alpha\over 2}\cdot{2\over 2-\alpha}}\biggr)^{2-\alpha\over 2}\,\biggl(P_{\theta_0}^{(n)}\Big({I_j(X^{(n)})\over p_{\theta_0}^{(n)}(X^{(n)})}\Big)^{{\alpha\over 2}\cdot{2\over \alpha}}\biggr)^{\alpha\over 2}$$
$$=\biggl(P_{\theta_0}^{(n)}\Big({I_j(X^{(n)})\over p_{\theta_0}^{(n)}(X^{(n)})}\Big)^{\alpha\over 2-\alpha}\biggr)^{2-\alpha\over 2},$$
which, by repeating the above procedure $m-1$ more times, does not exceed
$$\biggl(P_{\theta_0}^{(n)}\Big({I_j(X^{(n)})\over p_{\theta_0}^{(n)}(X^{(n)})}\Big)^{\alpha\over 2^m(1-\alpha)+\alpha}\biggr)^{2^m(1-\alpha)+\alpha\over 2^m}   \leq\biggl(P_{\theta_0}^{(n)}\Big({I_j(X^{(n)})\over p_{\theta_0}^{(n)}(X^{(n)})}\Big)^{1\over 2}\biggr)^{\alpha\over 2^{m-1}}$$
$$=\Bigl(\ \int\sqrt{I_j(X^{(n)})\,p_{\theta_0}^{(n)}(X^{(n)})}\, \mu(dX^{(n)}) \Bigr)^{\alpha\over 2^{m-1}}$$
$$
= \Bigl(1-{H\big(I_j(X^{(n)})\,p_{\theta_0}^{(n)}(X^{(n)})\big)^2\over 2}\Bigr)^{\alpha\over 2^{m-1}}\leq  e^{-2^{-m}\,\alpha\,n\,d_n^1(I_j\,\theta_0)^2 }
$$$$\leq e^{-2^{-m}\,(1-2\delta)^2\,\alpha\,n\,\varepsilon^2}\leq e^{-{1\over 2}(1-\alpha)\,(1-2\delta)^2\,n\,\varepsilon^2},$$
which completes the proof of Proposition \ref{prop:2}.
\end{proof}

\begin{proof}[Proof of Proposition \ref{prop:0}.] Denote $S=\{\theta\in \Theta_1:\,d_n^1(\theta,\theta_0)> \varepsilon\}$. Assume first $0<\beta\leq 1/2$. By H\"older's inequality and the inequality $1-x\leq e^{-x}$, we have
$$P_{\theta_0}^{(n)}\Big(\int_SR_\theta^{(n)}(X^{(n)})^\beta\, \Pi_n(d\theta)\Big)^\alpha$$
$$\leq P_{\theta_0}^{(n)}\Big(\int_SR_\theta^{(n)}(X^{(n)})^{\beta\cdot {1\over 2\beta}}\, \Pi_n(d\theta)\Big)^{ 2\beta\alpha} \,\Pi_n (S)^{(1-2\beta)\alpha} $$
$$\leq \Big(P_{\theta_0}^{(n)}\int_SR_\theta^{(n)}(X^{(n)})^{1\over 2}\, \Pi_n(d\theta)\Big)^{ 2\beta\alpha} \,\Pi_n( S)^{(1-2\beta)\alpha} $$
$$= \Big(\int_SP_{\theta_0}^{(n)}R_\theta^{(n)}(X^{(n)})^{1\over 2}\, \Pi_n(d\theta)\Big)^{ 2\beta\alpha} \,\Pi_n( S)^{(1-2\beta)\alpha} $$
$$\leq \Big(\int_S e^{-{1\over 2}n\varepsilon^2}\ \Pi_n(d\theta)\Big)^{ 2\beta\alpha} \,\Pi_n( S)^{(1-2\beta)\alpha}= e^{-\beta\alpha n\varepsilon^2}\,\Pi_n( S)^{\alpha},$$
which gives the required inequality when $0<\beta\leq 1/2$. If $1/2<\beta<1$ we take $p={1\over 2-2\beta}$ and $q={1\over 2\beta-1}$. It then follows from H\"older's inequality that
$$P_{\theta_0}^{(n)}\Big(\int_SR_\theta^{(n)}(X^{(n)})^\beta\, \Pi_n(d\theta)\Big)^\alpha $$
$$\leq P_{\theta_0}^{(n)}\Biggl(\bigg(\int_SR_\theta^{(n)}(X^{(n)})^{{1\over 2p}\cdot p}\, \Pi_n(d\theta)\bigg)^{\alpha\over p}\, \bigg(\int_SR_\theta^{(n)}(X^{(n)})^{(\beta-{1\over 2p})\cdot q}\, \Pi_n(d\theta)\bigg)^{\alpha\over q}\Biggr)  $$
$$\leq \Biggl(P_{\theta_0}^{(n)}\bigg(\int_SR_\theta^{(n)}(X^{(n)})^{1\over 2}\, \Pi_n(d\theta)\bigg)^{\alpha}\Biggr)^{1\over p}\, \Biggl(P_{\theta_0}^{(n)}\bigg(\int_SR_\theta^{(n)}(X^{(n)})\, \Pi_n(d\theta)\bigg)^{\alpha}\Biggr)^{1\over q}  $$
$$\leq \bigg(P_{\theta_0}^{(n)}\int_SR_\theta^{(n)}(X^{(n)})^{1\over 2}\, \Pi_n(d\theta)\bigg)^{\alpha\over p}\,\bigg( P_{\theta_0}^{(n)}\int_SR_\theta^{(n)}(X^{(n)})\, \Pi_n(d\theta)\bigg)^{\alpha\over q}  $$
$$\leq \Big(\int_S e^{-{1\over 2}n\varepsilon^2}\ \Pi_n(d\theta)\Big)^{ \alpha\over p} \,\Pi_n( S)^{\alpha\over q}= e^{-(1-\beta)\alpha n\varepsilon^2}\,\Pi_n( S)^{\alpha}.$$
The proof of Proposition \ref{prop:0} is complete.
\end{proof}

To prove Theorem \ref{thm:1} we need two simple lemmas.

\begin{lem}\label{lem:2}Let $\varepsilon>0$ and $c>0$. Then the inequality
$$P_{\theta_0}^{(n)}\Bigl(\ \int_{\Theta} R_\theta^{(n)}(X^{(n)})\, \Pi_n(d\theta)\leq e^{-n\,\varepsilon^2\,(3+2c)}\ \Pi_n\bigl( W_n(\theta_0,\varepsilon) \bigr)\, \Bigr)\leq  e^{-n\,\varepsilon^2\, c} $$
holds for all $n$.
\end{lem}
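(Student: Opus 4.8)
The plan is to lower-bound the integral $\int_{\Theta} R_\theta^{(n)}(X^{(n)})\,\Pi_n(d\theta)$ by restricting the domain of integration to the set $W_n(\theta_0,\varepsilon)$, and then to control the resulting random variable from below with high probability. Write $Z=\int_{W_n(\theta_0,\varepsilon)} R_\theta^{(n)}(X^{(n)})\,\Pi_n(d\theta)$, so that the event in the lemma is contained in $\{Z\leq e^{-n\varepsilon^2(3+2c)}\,\Pi_n(W_n(\theta_0,\varepsilon))\}$. The standard device is to pass to $\log$ and use Jensen's inequality with respect to the normalized prior $\Pi_n(\,\cdot\,)/\Pi_n(W_n(\theta_0,\varepsilon))$ restricted to $W_n(\theta_0,\varepsilon)$: this gives
$$\log Z \geq \log\Pi_n\bigl(W_n(\theta_0,\varepsilon)\bigr) + \frac{1}{\Pi_n(W_n(\theta_0,\varepsilon))}\int_{W_n(\theta_0,\varepsilon)} \log R_\theta^{(n)}(X^{(n)})\,\Pi_n(d\theta).$$
So it suffices to show that the average log-likelihood-ratio term is $\geq -n\varepsilon^2(3+2c)$ with probability at least $1-e^{-n\varepsilon^2 c}$.

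**Next I would** estimate the mean and the fluctuation of $\int_{W_n} \log R_\theta^{(n)}\,d\Pi_n$ under $P_{\theta_0}^{(n)}$. The key point is that membership in $W_n(\theta_0,\varepsilon)$ controls $H_*(p_{\theta_0}^{(n)},p_\theta^{(n)})$, and the identity used earlier in the paper (in the proof of Lemma 1 of Xing \cite{xi1}) relating $H_*$ to the exponential moment $P_{\theta_0}^{(n)}(R_\theta^{(n)})^{-1/2}=E_{\theta_0}\sqrt{p_{\theta_0}^{(n)}/p_\theta^{(n)}}$, namely $1+\frac{3}{2}H_*(p_{\theta_0}^{(n)},p_\theta^{(n)})^2 = P_{\theta_0}^{(n)}(R_\theta^{(n)})^{-1/2}$. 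Combined with $\theta\in W_n(\theta_0,\varepsilon)$, this yields $P_{\theta_0}^{(n)}(R_\theta^{(n)})^{-1/2}\leq e^{\frac{3}{2}n\varepsilon^2}$ for $\theta\in W_n(\theta_0,\varepsilon)$. Now by Chebyshev/Markov on $e^{-\frac12\int_{W_n}\log R_\theta^{(n)}\,d\tilde\Pi_n}$ where $\tilde\Pi_n$ is the normalized restricted prior, and Jensen applied again to pull the average out of the exponent, one bounds
$$P_{\theta_0}^{(n)}\!\left(\int_{W_n}\!\log R_\theta^{(n)}\,d\tilde\Pi_n \leq -n\varepsilon^2(3+2c)\right) \leq e^{-\frac12 n\varepsilon^2(3+2c)}\;P_{\theta_0}^{(n)} e^{-\frac12\int_{W_n}\log R_\theta^{(n)}\,d\tilde\Pi_n} \leq e^{-\frac12 n\varepsilon^2(3+2c)}\,e^{\frac32 n\varepsilon^2},$$
where the last step uses Jensen (for $\tilde\Pi_n$) to move $\frac12\int\log(\cdot)$ inside as $\int(\cdot)^{1/2}$, then Fubini to integrate in $X^{(n)}$ first, then the $H_*$-bound above. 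The exponent is $-\frac12 n\varepsilon^2(3+2c)+\frac32 n\varepsilon^2 = -n\varepsilon^2 c$, which is exactly the claimed bound.

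**The main obstacle** is bookkeeping the two successive applications of Jensen's inequality in opposite directions — once to lower-bound $\log Z$ (convexity of $-\log$) and once inside the expectation to upper-bound the exponential moment (concavity of $t\mapsto t^{1/2}$) — and making sure the normalization by $\Pi_n(W_n(\theta_0,\varepsilon))$ is handled consistently; one must also take care when $\Pi_n(W_n(\theta_0,\varepsilon))=0$, in which case the asserted inequality is trivial since the event has the form $\{\,\cdot\,\leq 0\,\}$ and $\int_\Theta R_\theta^{(n)}\,d\Pi_n\geq 0$. A secondary point is that one needs the elementary numerical fact $n\varepsilon^2\leq \frac23(e^{\frac32 n\varepsilon^2}-1)$ already noted in the excerpt only if one wants to phrase $W_n$ via the $\sqrt n\varepsilon$ radius; for the present lemma the definition of $W_n(\theta_0,\varepsilon)$ with the $\sqrt{\frac23(e^{\frac32 n\varepsilon^2}-1)}$ radius is exactly calibrated so that $H_*(p_{\theta_0}^{(n)},p_\theta^{(n)})^2\leq \frac23(e^{\frac32 n\varepsilon^2}-1)$ translates directly into $P_{\theta_0}^{(n)}(R_\theta^{(n)})^{-1/2}\leq e^{\frac32 n\varepsilon^2}$, so no slack is lost.
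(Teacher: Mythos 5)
Your proof is correct and takes essentially the same approach as the paper: restrict the prior integral to $W_n(\theta_0,\varepsilon)$, use Markov's inequality together with Jensen to reduce matters to bounding $\int_{W_n}P_{\theta_0}^{(n)}(R_\theta^{(n)})^{-1/2}\,d\Pi_n$, and then invoke the exact identity $P_{\theta_0}^{(n)}(R_\theta^{(n)})^{-1/2}=1+\tfrac32 H_*(p_{\theta_0}^{(n)},p_\theta^{(n)})^2$ together with the calibration of the $W_n$-radius. The only cosmetic difference is that you route the computation through $\log$ (one Jensen via concavity of $\log$, one via convexity of $\exp$), whereas the paper raises the restricted integral directly to the $-\tfrac12$ power and applies Jensen once for the convexity of $x\mapsto x^{-1/2}$; the intermediate quantities and all constants coincide.
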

\begin{proof} Without loss of generality, we may assume that $\Pi_n\bigl( W_n(\theta_0,\varepsilon)\bigr)>0$. From Jensen's inequality and Chebyshev's inequality it follows that
$$P_{\theta_0}^{(n)}\Bigl(\ \int_{\Theta} R_\theta^{(n)}(X^{(n)})\, \Pi_n(d\theta)\leq e^{-n\,\varepsilon^2\,(3+2c)}\ \Pi_n\bigl( W_n(\theta_0,\varepsilon) \bigr)\, \Bigr)$$
$$\leq P_{\theta_0}^{(n)}\biggl(\ e^{n\,\varepsilon^2\,({3\over 2}+c)}\leq  \Bigl(\,{1\over \Pi_n\bigl(W_n(\theta_0,\varepsilon)\bigr)}\,\int_{W_n(\theta_0,\varepsilon)}  R_\theta^{(n)}(X^{(n)})\, \Pi_n(d\theta)\Bigr)^{-{1\over 2}}\ \biggr)$$
$$\leq P_{\theta_0}^{(n)}\biggl(\ e^{n\,\varepsilon^2\,({3\over 2}+c)}\leq  {1\over \Pi_n\bigl(W_n(\theta_0,\varepsilon)\bigr)}\,\int_{W_n(\theta_0,\varepsilon)}  R_\theta^{(n)}(X^{(n)})^{-{1\over 2}}\, \Pi_n(d\theta)\ \biggr)$$
$$\leq {\int_{W_n(\theta_0,\varepsilon)}  P_{\theta_0}^{(n)}R_\theta^{(n)}(X^{(n)})^{-{1\over 2}}\, \Pi_n(d\theta)\over e^{n\,\varepsilon^2\,({3\over 2}+c)} \Pi_n\bigl(W_n(\theta_0,\varepsilon)\bigr)},$$
where
$$P_{\theta_0}^{(n)}R_\theta^{(n)}(X^{(n)})^{-{1\over 2}}=1+\int \Bigl(\, \sqrt{p_{\theta_0}^{(n)}(X^{(n)})}-\sqrt{p_\theta^{(n)}(X^{(n)})}\,\Bigr)^2 \,{\sqrt{p_{\theta_0}^{(n)}(X^{(n)})}\over \sqrt{p_\theta^{(n)}(X^{(n)})}} \, \mu(dx)$$
$$+\int \Bigl(\, p_{\theta_0}^{(n)}(X^{(n)})-\sqrt{p_\theta^{(n)}(X^{(n)})\,p_{\theta_0}^{(n)}(X^{(n)})}\,\Bigr)\,  \mu(dx)$$
$$=1+\int \Bigl(\, \sqrt{p_{\theta_0}^{(n)}(X^{(n)})}-\sqrt{p_\theta^{(n)}(X^{(n)})}\,\Bigr)^2 \,{\sqrt{p_{\theta_0}^{(n)}(X^{(n)})}\over \sqrt{p_\theta^{(n)}(X^{(n)})}} \, \mu(dx)$$
$$+{1\over 2}\,\int \Bigl(\, \sqrt{p_{\theta_0}^{(n)}(X^{(n)})}-\sqrt{p_\theta^{(n)}(X^{(n)})}\,\Bigr)^2\,  \mu(dx)$$
$$= 1+ {3\over 2}\,H_*(p_{\theta_0}^{(n)},p_\theta^{(n)})^2\leq e^{{3\over 2}n\varepsilon^2},$$
which implies the required inequality and the proof of Lemma \ref{lem:2} is complete.
\end{proof}
\begin{lem}\label{lem:3}
Under Assumption \ref{ass:2}, the inequality
$$P_{\theta_0}^{(n)}\Big(\int_{\theta\in \Theta_1:\,d_n(\theta,\theta_0)\geq r\varepsilon}R_\theta^{(n)}(X^{(n)}) \Pi_n(d\theta)\Big)^\alpha$$
$$\leq K_1\sum\limits_{j=[r-1]}^\infty e^{-K_2nj^2\varepsilon^2}C(j\varepsilon,\{\theta\in \Theta_1:\, j\varepsilon < d_n(\theta,\theta_0)\leq 2j\varepsilon\},\alpha,e_n)^{K_3}$$
holds for all  $r\geq 2$, $\varepsilon>0$, $\Theta_1\subset \Theta$ and for all $n$ large enough.
\end{lem}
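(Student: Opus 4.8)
The plan is to prove Lemma \ref{lem:3} by a shell (``peeling'') decomposition of the far region $\{\theta\in\Theta_1:\,d_n(\theta,\theta_0)\geq r\varepsilon\}$ into the annuli appearing on the right-hand side, followed by a single application of Assumption \ref{ass:2} on each annulus. Put $m=[r-1]$ and, for every integer $j\geq m$, set $\Theta_{1,j}=\{\theta\in\Theta_1:\,j\varepsilon<d_n(\theta,\theta_0)\leq 2j\varepsilon\}$. Since $r\geq 2$ we have $m\geq 1$, so the intervals $(j\varepsilon,2j\varepsilon]$, $j\geq m$, are successively overlapping (because $j+1\leq 2j$ for $j\geq 1$) and their union equals $(m\varepsilon,\infty)\supseteq[r\varepsilon,\infty)$. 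Consequently every $\theta\in\Theta_1$ with $d_n(\theta,\theta_0)\geq r\varepsilon$ belongs to at least one $\Theta_{1,j}$; verifying this covering is the one place where the hypothesis $r\geq 2$ is used, and it is the only mildly delicate point of the argument.

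Granting the covering, I would use $\mathbf 1_{\{\theta:\,d_n(\theta,\theta_0)\geq r\varepsilon\}}\leq\sum_{j\geq m}\mathbf 1_{\Theta_{1,j}}$ together with $R_\theta^{(n)}\geq 0$ to obtain
$$\int_{\theta\in\Theta_1:\,d_n(\theta,\theta_0)\geq r\varepsilon}R_\theta^{(n)}(X^{(n)})\,\Pi_n(d\theta)\leq\sum_{j=m}^\infty\int_{\Theta_{1,j}}R_\theta^{(n)}(X^{(n)})\,\Pi_n(d\theta),$$
the overcounting produced by the overlap of the shells only strengthening the inequality. Because $0<\alpha<1$, the elementary subadditivity $\bigl(\sum_j a_j\bigr)^\alpha\leq\sum_j a_j^\alpha$ for $a_j\geq 0$ moves the power inside the sum, and then taking the $P_{\theta_0}^{(n)}$-expectation and interchanging it with the (nonnegative) sum by Tonelli's theorem gives
$$P_{\theta_0}^{(n)}\Big(\int_{\theta\in\Theta_1:\,d_n(\theta,\theta_0)\geq r\varepsilon}R_\theta^{(n)}(X^{(n)})\,\Pi_n(d\theta)\Big)^\alpha\leq\sum_{j=m}^\infty P_{\theta_0}^{(n)}\Big(\int_{\Theta_{1,j}}R_\theta^{(n)}(X^{(n)})\,\Pi_n(d\theta)\Big)^\alpha.$$

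Finally I would bound each summand by one use of Assumption \ref{ass:2} with subset $\Theta_{1,j}$ and radius $j\varepsilon>0$: every $\theta\in\Theta_{1,j}$ satisfies $d_n(\theta,\theta_0)>j\varepsilon$, so the restricted set $\{\theta\in\Theta_{1,j}:\,d_n(\theta,\theta_0)>j\varepsilon\}$ is $\Theta_{1,j}$ itself, and Assumption \ref{ass:2} yields
$$P_{\theta_0}^{(n)}\Big(\int_{\Theta_{1,j}}R_\theta^{(n)}(X^{(n)})\,\Pi_n(d\theta)\Big)^\alpha\leq K_1\,e^{-K_2nj^2\varepsilon^2}\,C\bigl(j\varepsilon,\Theta_{1,j},\alpha,e_n\bigr)^{K_3}$$
for all $n$ large enough; since the ``$n$ large enough'' threshold in Assumption \ref{ass:2} is the same for every choice of the radius and of the subset, it is uniform in $j$. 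Summing over $j\geq m=[r-1]$ reproduces exactly the asserted bound. No deep obstacle is anticipated; the two points that need care are the validity of the covering for $r\geq 2$ and the Tonelli interchange of the infinite sum with $P_{\theta_0}^{(n)}$.
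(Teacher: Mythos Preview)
Your proposal is correct and follows essentially the same route as the paper: the same peeling into the annuli $\Theta_{1,j}$ starting at $j=[r-1]$, the same use of the subadditivity $(\sum a_j)^\alpha\le\sum a_j^\alpha$, and the same application of Assumption~\ref{ass:2} to each $\Theta_{1,j}$ with radius $j\varepsilon$, observing that $\{\theta\in\Theta_{1,j}:d_n(\theta,\theta_0)>j\varepsilon\}=\Theta_{1,j}$. Your discussion of the covering and of the uniformity in $j$ of the ``$n$ large enough'' threshold is in fact more explicit than the paper's, but the argument is the same.
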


 \begin{proof} Note that $\{\theta\in \Theta_1:\,d_n(\theta,\theta_0)\geq r\varepsilon\}\subset \{\theta\in \Theta_1:\,d_n(\theta,\theta_0)\geq [r]\varepsilon\}=\cup_{j=[r-1]}^\infty \{\theta\in \Theta_1:\,j\varepsilon < d_n(\theta,\theta_0)\leq 2j\varepsilon\}:=\cup_{j=[r-1]}^\infty\Theta_{1,j}$. Using the inequality $(x+y)^\alpha\leq x^\alpha+y^\alpha$ for all $x,y\geq 0$ and Assumption \ref{ass:2} for $\Theta_1=\Theta_{1,j}$ we obtain
$$P_{\theta_0}^{(n)}\Big(\int_{\theta\in \Theta_1:\,d_n(\theta,\theta_0)\geq r\varepsilon}R_\theta^{(n)}(X^{(n)}) \Pi_n(d\theta)\Big)^\alpha$$
$$\leq \sum\limits_{j=[r-1]}^\infty P_{\theta_0}^{(n)}\Big(\int_{ \Theta_{1,j}}R_\theta^{(n)}(X^{(n)}) \Pi_n(d\theta)\Big)^\alpha$$
$$\leq K_1\sum\limits_{j=[r-1]}^\infty e^{-K_2nj^2\varepsilon^2}C(j\varepsilon,\{\theta\in \Theta_{1,j}:\, d_n(\theta,\theta_0)>j\varepsilon\},\alpha,e_n)^{K_3}$$
$$=K_1\sum\limits_{j=[r-1]}^\infty e^{-K_2nj^2\varepsilon^2}C(j\varepsilon,\{\theta\in \Theta_1:\, j\varepsilon < d_n(\theta,\theta_0)\leq 2j\varepsilon\},\alpha,e_n)^{K_3}.$$
The proof of Lemma \ref{lem:3} is complete.
\end{proof}

\begin{proof}[Proof of Theorem \ref{thm:1}.] Take a constant $c>1/c_0$. Then $e^{-n\,\varepsilon_n^2\, c}\leq e^{- cc_0\log n}
=1/n^{cc_0}$ and hence  $\sum_{n=1}^\infty e^{-n\,\varepsilon_n^2\, c}< \infty$. By Lemma \ref{lem:2} and the first Borel-Cantelli lemma, we get that for almost all $X^{(n)}$ the inequality
$$ \int_{\Theta} R_\theta^{(n)}(X^{(n)})\, \Pi_n(d\theta)\geq e^{-n\,\varepsilon_n^2\,(3+2c)}\ \Pi_n\bigl( W_n(\theta_0,\varepsilon_n)\bigr)$$
holds for all large $n$. Thus, for any $\delta>0$ we have
$$P_{\theta_0}^{(n)}\Bigl(\Pi_n\bigl(\theta\in \Theta_n:\,d_n(\theta,\theta_0)\geq r\,\varepsilon_n|X^{(n)}\bigr)\geq \delta \Bigr) $$
$$=P_{\theta_0}^{(n)}\Bigl(\delta^{-\alpha}\Pi_n\bigl(\theta\in \Theta_n:\,d_n(\theta,\theta_0)\geq r\,\varepsilon_n|X^{(n)}\bigr)^\alpha\geq 1 \Bigr) $$
$$\leq \delta^{-\alpha}P_{\theta_0}^{(n)}\Bigl(\Pi_n\bigl(\theta\in \Theta_n:\,d_n(\theta,\theta_0)\geq r\,\varepsilon_n|X^{(n)}\bigr)^\alpha \Bigr)\leq $$
$$ \delta^{-\alpha}e^{\alpha n\,\varepsilon_n^2\,(3+2c)}  \Pi_n\bigl(W_n(\theta_0,\varepsilon_n)\bigr)^{-\alpha} P_{\theta_0}^{(n)}\Big(\int_{\theta\in \Theta_n:\, d_n(\theta,\theta_0)\geq r\varepsilon_n}R_\theta^{(n)}(X^{(n)}) \Pi_n(d\theta)\Big)^\alpha,$$
which, by Lemma \ref{lem:3} and the inequality (2), does not exceed
$$K_1\delta^{-\alpha}e^{\alpha n\,\varepsilon_n^2\,(3+2c)} \sum\limits_{j=[r-1]}^\infty e^{-K_2nj^2\varepsilon_n^2+c_1j^2n\varepsilon_n^2}$$
$$\leq K_1\delta^{-\alpha}e^{\alpha n\,\varepsilon_n^2\,(3+2c)} \sum\limits_{j=[r-1]}^\infty e^{(c_1-K_2)jn\varepsilon_n^2}$$
$$=   {K_1e^{(c_1-K_2)[r-1]n\varepsilon_n^2+\alpha (3+2c) n\,\varepsilon_n^2}\over \delta^{\alpha} (1-e^{(c_1-K_2)n\varepsilon_n^2})}\leq {K_1n^{(c_1-K_2)[r-1]c_0+\alpha (3+2c) c_0}\over \delta^{\alpha} (1-n^{(c_1-K_2)c_0})}$$
$$\leq 2K_1\delta^{-\alpha}n^{(c_1-K_2)[r-1]c_0+\alpha (3+2c) c_0},$$
where the next last inequality holds for all large $r$ and the last inequality holds for all large $n$. Since the last exponent is strictly less than $-1$ for all large $r$, by the first Borel-Cantelli lemma we obtain that for almost all $X^{(n)}$,
$$\Pi_n\bigl(\theta\in \Theta_n:\,d_n(\theta,\theta_0)\geq r\,\varepsilon_n|X^{(n)}\bigr)\leq \delta $$
if $n$ is large enough, which yields the first assertion.

To get the second assertion, choose a positive constant $b$ with $c_2-{b\over 2}>{1\over c_0}$. We then follow the above proof, but take $c=c_2-{b\over 2}$ and $\delta=e^{-bn\varepsilon_n^2}$ instead, and note that
$$P_{\theta_0}^{(n)}\Bigl(\Pi_n\bigl(\theta\in \Theta:\,d_n(\theta,\theta_0)\geq r\,\varepsilon_n|X^{(n)}\bigr)\geq e^{-bn\varepsilon_n^2}\Bigr) $$
$$\leq P_{\theta_0}^{(n)}\Bigl(\Pi_n\bigl(\theta\in \Theta_n:\,d_n(\theta,\theta_0)\geq r\,\varepsilon_n|X^{(n)}\bigr)\geq {1\over 2}\,e^{-bn\varepsilon_n^2} \Bigr) $$
$$+P_{\theta_0}^{(n)}\Bigl(\Pi_n\bigl(\theta\in \Theta\setminus\Theta_n:\,d_n(\theta,\theta_0)\geq r\,\varepsilon_n|X^{(n)}\bigr)\geq {1\over 2}\,e^{-bn\varepsilon_n^2} \Bigr), $$
where by Lemma \ref{lem:2} the second term on the right hand side is dominated by
$${2e^{bn\varepsilon_n^2}e^{n\,\varepsilon_n^2\,(3+2c_2-b)} \over \Pi_n\bigl(W_n(\theta_0,\varepsilon_n)\bigr)} \,  P_{\theta_0}^{(n)}\int_{\theta\in \Theta\setminus \Theta_n:\, d_n(\theta,\theta_0)\geq r\varepsilon_n}R_\theta^{(n)}(X^{(n)}) \Pi_n(d\theta)$$
$$\leq {2e^{n\,\varepsilon_n^2\,(3+2c_2)}\over  \Pi_n\bigl(W_n(\theta_0,\varepsilon_n)\bigr)} \,  \int_{\Theta\setminus \Theta_n}P_{\theta_0}^{(n)}R_\theta^{(n)}(X^{(n)}) \Pi_n(d\theta)= {2e^{n\,\varepsilon_n^2\,(3+2c_2)} \Pi_n\bigl(\Theta\setminus \Theta_n\bigr)\over  \Pi_n\bigl(W_n(\theta_0,\varepsilon_n)\bigr)}.$$
Then, using the same argument as the above, one can easily prove the second assertion  and the proof of Theorem \ref{thm:1} is complete.
\end{proof}

Using the trivial inequality $C(\delta\varepsilon_n,\Theta_n,\alpha,e_n)\leq C(\varepsilon_n,\Theta_n,\alpha,e_n)$ for $\delta\geq 1$, one can similarly prove Theorem \ref{thm:2}. The proof of Theorem \ref{thm:3} is only a slight modification of the proof of Theorem \ref{thm:1} except that we need to apply Lemma 10 in Ghosal and van der Vaart \cite{ghv1}. The proof of Theorem \ref{thm:4.1} is completely similar to the proof of Theorem \ref{thm:1}, but instead of an application of Lemma \ref{lem:2} one needs the following Lemma.

\begin{lem}\label{lem:4} For independent observations $(X_1,X_2,\dots,X_n)$ we have
that the inequality
$$P_{\theta_0}^{(n)}\biggl(\ \int_{\Theta} \Bigl(\prod\limits_{i=1}^n{p_{\theta,i}(X_i)\over p_{\theta_0,i}(X_i)}\Bigr)^\beta\, \Pi_n(d\theta)\leq e^{-n\,\varepsilon^2\,(3+2c)\beta}\ \Pi_n\bigl( \overline{W}_n(\theta_0,\varepsilon) \bigr)\, \biggr)\leq  e^{-n\,\varepsilon^2\, c} $$
holds for all $n$, $\varepsilon>0$, $c>0$ and $0<\beta\leq 1$.
\end{lem}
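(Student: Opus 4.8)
The plan is to transcribe the proof of Lemma \ref{lem:2} to the pseudolikelihood setting, keeping careful track of the exponent $\beta$. We may assume $\Pi_n\bigl(\overline{W}_n(\theta_0,\varepsilon)\bigr)>0$, since otherwise the assertion is trivial. Write $R_\theta^{(n)}(X^{(n)})=\prod_{i=1}^n p_{\theta,i}(X_i)\big/p_{\theta_0,i}(X_i)$, so that the integrand in the statement equals $R_\theta^{(n)}(X^{(n)})^\beta$. Restricting the integral over $\Theta$ to $\overline{W}_n(\theta_0,\varepsilon)$ and dividing by $\Pi_n\bigl(\overline{W}_n(\theta_0,\varepsilon)\bigr)$, the event inside $P_{\theta_0}^{(n)}(\cdot)$ is contained in
$$\Bigl\{\ \frac{1}{\Pi_n\bigl(\overline{W}_n(\theta_0,\varepsilon)\bigr)}\int_{\overline{W}_n(\theta_0,\varepsilon)}R_\theta^{(n)}(X^{(n)})^\beta\,\Pi_n(d\theta)\leq e^{-n\varepsilon^2(3+2c)\beta}\ \Bigr\}.$$
Raising both sides to the power $-1/(2\beta)$ and applying Jensen's inequality to the convex function $x\mapsto x^{-1/(2\beta)}$ with respect to the normalized restriction of $\Pi_n$ to $\overline{W}_n(\theta_0,\varepsilon)$, this event is in turn contained in
$$\Bigl\{\ \frac{1}{\Pi_n\bigl(\overline{W}_n(\theta_0,\varepsilon)\bigr)}\int_{\overline{W}_n(\theta_0,\varepsilon)}R_\theta^{(n)}(X^{(n)})^{-1/2}\,\Pi_n(d\theta)\geq e^{n\varepsilon^2({3\over 2}+c)}\ \Bigr\},$$
where the crucial point is that the factor $\beta$ from $(3+2c)\beta$ and the factor $\beta$ from $-1/(2\beta)$ cancel, leaving an exponent free of $\beta$.

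Next I would bound the probability of the latter event by Markov's inequality together with Tonelli's theorem, giving an upper bound of $e^{-n\varepsilon^2({3\over 2}+c)}\Pi_n\bigl(\overline{W}_n(\theta_0,\varepsilon)\bigr)^{-1}\int_{\overline{W}_n(\theta_0,\varepsilon)}P_{\theta_0}^{(n)}R_\theta^{(n)}(X^{(n)})^{-1/2}\,\Pi_n(d\theta)$. For $\theta\in\overline{W}_n(\theta_0,\varepsilon)$ the moment $P_{\theta_0}^{(n)}R_\theta^{(n)}(X^{(n)})^{-1/2}$ factorizes over the coordinates by independence, and the single-coordinate identity already used in the proof of Lemma \ref{lem:2} (and in Lemma 1 of Xing \cite{xi1}) gives $\int (p_{\theta_0,i})^{3/2}(p_{\theta,i})^{-1/2}\,d\mu_i=1+{3\over 2}H_{*,i}(p_{\theta_0,i},p_{\theta,i})^2$. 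Hence, using $1+x\leq e^x$ and the defining inequality of $\overline{W}_n(\theta_0,\varepsilon)$,
$$P_{\theta_0}^{(n)}R_\theta^{(n)}(X^{(n)})^{-1/2}=\prod_{i=1}^n\Bigl(1+{3\over 2}H_{*,i}(p_{\theta_0,i},p_{\theta,i})^2\Bigr)\leq e^{{3\over 2}\sum_{i=1}^n H_{*,i}(p_{\theta_0,i},p_{\theta,i})^2}\leq e^{{3\over 2}n\varepsilon^2}.$$
Substituting this into the Markov bound makes the numerator at most $e^{{3\over 2}n\varepsilon^2}\Pi_n\bigl(\overline{W}_n(\theta_0,\varepsilon)\bigr)$, so the whole expression is at most $e^{-cn\varepsilon^2}$, which is the claim.

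The argument is essentially a routine adaptation of Lemma \ref{lem:2}; the only delicate bookkeeping is the Jensen step, where the exponent $-1/(2\beta)$ must be matched against the factor $\beta$ in the hypothesis so that the same single-coordinate moment identity $1+{3\over 2}H_{*,i}^2$ can be reused and the resulting bound comes out independent of $\beta$. I do not expect any genuine obstacle beyond checking this matching and the (standard) measurability and integrability points needed to apply Jensen's inequality and Tonelli's theorem, which are guaranteed by the finiteness of $P_{\theta_0}^{(n)}R_\theta^{(n)}(X^{(n)})^{-1/2}$ on $\overline{W}_n(\theta_0,\varepsilon)$.
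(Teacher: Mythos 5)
Your proof follows the paper's own argument step for step: restrict and normalize to $\overline{W}_n(\theta_0,\varepsilon)$, raise to the power $-1/(2\beta)$ (noting the cancellation of $\beta$), apply Jensen's inequality to pass to $R_\theta^{-1/2}$, then Markov/Tonelli, the per-coordinate identity $1+\tfrac{3}{2}H_{*,i}^2$, and $1+x\leq e^x$. It is correct and is essentially identical to the proof given in the Appendix.
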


\begin{proof}[Proof of Lemma \ref{lem:4}.] Similar to the proof of Lemma \ref{lem:2} one can get that
$$P_{\theta_0}^{(n)}\biggl(\ \int_{\Theta} \Bigl(\prod\limits_{i=1}^n{p_{\theta,i}(X_i)\over p_{\theta_0,i}(X_i)}\Bigr)^\beta\, \Pi_n(d\theta)\leq e^{-n\,\varepsilon^2\,(3+2c)\beta}\ \Pi_n\bigl( \overline{W}_n(\theta_0,\varepsilon) \bigr)\, \biggr)$$
$$\leq P_{\theta_0}^{(n)}\biggl(\ e^{n\,\varepsilon^2\,({3\over 2}+c)}\leq  \Bigl(\,{1\over \Pi_n\bigl(\overline{W}_n(\theta_0,\varepsilon)\bigr)}\,\int_{\overline{W}_n(\theta_0,\varepsilon)}  \Bigl(\prod\limits_{i=1}^n{p_{\theta,i}(X_i)\over p_{\theta_0,i}(X_i)}\Bigr)^\beta\, \Pi_n(d\theta)\Bigr)^{-{1\over 2\beta}}\ \biggr)$$
$$\leq P_{\theta_0}^{(n)}\biggl(\ e^{n\,\varepsilon^2\,({3\over 2}+c)}\leq  {1\over \Pi_n\bigl(\overline{W}_n(\theta_0,\varepsilon)\bigr)}\,\int_{\overline{W}_n(\theta_0,\varepsilon)}  \Bigl(\prod\limits_{i=1}^n{p_{\theta,i}(X_i)\over p_{\theta_0,i}(X_i)}\Bigr)^{-{1\over 2}}\, \Pi_n(d\theta)\ \biggr)$$
$$\leq {\int_{\overline{W}_n(\theta_0,\varepsilon)}  \prod_{i=1}^n \big(1+ {3\over 2}\,H_{*,i}(p_{\theta_1,i},p_{\theta_2,i})^2\big)\, \Pi_n(d\theta)\over e^{n\,\varepsilon^2\,({3\over 2}+c)} \Pi_n\bigl(\overline{W}_n(\theta_0,\varepsilon)\bigr)}$$
$$\leq {\int_{\overline{W}_n(\theta_0,\varepsilon)}  e^{{3\over 2}\,\sum_{i=1}^n H_{*,i}(p_{\theta_1,i},p_{\theta_2,i})^2}\, \Pi_n(d\theta)\over e^{n\,\varepsilon^2\,({3\over 2}+c)} \Pi_n\bigl(\overline{W}_n(\theta_0,\varepsilon)\bigr)}\leq  e^{-n\,\varepsilon^2\, c}, $$
which concludes the proof.
\end{proof}

\begin{proof}[Proof of Proposition \ref{prop:4}.] It is no restriction to assume that $n=2k$ is an even number. Similar to the proof of Proposition \ref{prop:2} we get that
the left side of the required inequality does not exceed $2C(\delta\,\varepsilon,\{\theta\in \Theta_1:\,d(\theta,\theta_0)> \varepsilon\},\alpha,d)$ times
$$\max_{1\leq j\leq N}\, P_{\theta_0}^{(n)}\bigg({{1\over \Pi_n(B_j)}\int_{B_j}{q_\theta(X_0)\prod\limits_{i=1}^{2k} p_\theta(X_i|X_{i-1})\over q_{\theta_0}(X_0)\prod\limits_{i=1}^{2k} p_{\theta_0}(X_i|X_{i-1}) } \, \Pi_n(d\theta) }\bigg)^\alpha$$
$$=\max_{1\leq j\leq N}\, P_{\theta_0}^{(n)}\bigg({\int_{B_j}q_\theta(X_0)\Pi_n(d\theta)\over q_{\theta_0}(X_0)\Pi_n(B_j)}\prod\limits_{s=0}^{2k-1}{I_{j,s}\over p_{\theta_0}(X_{s+1}|X_s)}\bigg)^\alpha=$$
$$\max_{1\leq j\leq N}\, P_{\theta_0}^{(n)}\Bigg(\bigg({\int_{B_j}q_\theta(X_0)\Pi_n(d\theta)\over q_{\theta_0}(X_0)\Pi_n(B_j)}\prod\limits_{t=1}^k{I_{j,2t-1}\over p_{\theta_0}(X_{2t}|X_{2t-1})}\bigg)^\alpha\bigg(\prod\limits_{t=0}^{k-1}{I_{j,2t}\over p_{\theta_0}(X_{2t+1}|X_{2t})}\bigg)^\alpha\Bigg)$$
$$\leq \max_{1\leq j\leq N}\, \Bigg(P_{\theta_0}^{(n)}\bigg({\int_{B_j}q_\theta(X_0)\Pi_n(d\theta)\over q_{\theta_0}(X_0)\Pi_n(B_j)}\prod\limits_{t=1}^k{I_{j,2t-1}\over p_{\theta_0}(X_{2t}|X_{2t-1})}\bigg)^{2\alpha}\Bigg)^{1\over 2}$$
$$\max_{1\leq j\leq N}\,\Bigg( P_{\theta_0}^{(n)}\bigg(\prod\limits_{t=0}^{k-1}{I_{j,2t}\over p_{\theta_0}(X_{2t+1}|X_{2t})}\bigg)^{2\alpha}\Bigg)^{1\over 2}:=\big(\max_{1\leq j\leq N}\,A_{j,k}\big)\ \big(\max_{1\leq j\leq N}\,B_{j,k}\big),$$
where the last inequality follows from H\"older's inequality, the set $B_j$ is defined in a similar way as that of Proposition \ref{prop:2} and we have used the notations $\prod\limits_{i=1}^0 p_\theta(X_i|X_{i-1})= 1$ and
$$I_{j,s}={\int_{B_j}q_\theta(X_0)\prod\limits_{i=1}^{s+1} p_\theta(X_i|X_{i-1})\, \Pi_n(d\theta)\over \int_{B_j}q_\theta(X_0)\prod\limits_{i=1}^s p_\theta(X_i|X_{i-1})\, \Pi_n(d\theta) }$$
for $s=0,1,\dots,2k-1$. We also let $I_{j,s}$ stand for the parameter of the corresponding integral means.
Take $\theta_j\in B_j$ for each $j$.
From Jensen's inequality and the assumption $a_0r(X_s)\leq p_{\theta}(X_s|X_{s-1})\leq  a_1 r(X_s)$ it turns out that
$$d(I_{j,s},\theta_j)^2=\int_{\mathfrak{X}}\int_{\mathfrak{X}}\big(\sqrt{I_{j,s}}
-\sqrt{p_{\theta_j}(X_{s+1}|X_s)}\,\big)^2 \,d\mu(X_{s+1})d\nu(X_s)$$
$$\leq   \int_{B_j}\int_{\mathfrak{X}}\int_{\mathfrak{X}}\big(\sqrt{p_\theta(X_{s+1}|X_s)}
-\sqrt{p_{\theta_j}(X_{s+1}|X_s)}\,\big)^2 \,d\mu(X_{s+1})$$
$${q_\theta(X_0)\prod\limits_{i=1}^s p_\theta(X_i|X_{i-1})\over \int_{B_j}q_\theta(X_0)\prod\limits_{i=1}^s p_\theta(X_i|X_{i-1})\, \Pi_n(d\theta) } \,  d\nu(X_s)\Pi_n(d\theta)$$
$$\leq {a_1\over a_0}  \int_{B_j}\int_{\mathfrak{X}}\int_{\mathfrak{X}}\big(\sqrt{p_\theta(X_{s+1}|X_s)}
-\sqrt{p_{\theta_j}(X_{s+1}|X_s)}\,\big)^2 \,d\mu(X_{s+1}) d\nu(X_s)$$
$${q_\theta(X_0)\prod\limits_{i=1}^{s-1} p_\theta(X_i|X_{i-1})\over \int_{B_j}q_\theta(X_0)\prod\limits_{i=1}^{s-1} p_\theta(X_i|X_{i-1})\, \Pi_n(d\theta) } \, \Pi_n(d\theta)\leq {4a_1\delta^2\varepsilon^2\over a_0}$$
$$= {a_1\over a_0}  \int_{B_j}d(\theta,\theta_j)^2{q_\theta(X_0)\prod\limits_{i=1}^{s-1} p_\theta(X_i|X_{i-1})\over \int_{B_j}q_\theta(X_0)\prod\limits_{i=1}^{s-1} p_\theta(X_i|X_{i-1})\, \Pi_n(d\theta) } \, \Pi_n(d\theta)\leq {4a_1\delta^2\varepsilon^2\over a_0}$$
Thus, $d(I_{j,s},\theta_j)\leq {2\sqrt{a_1}\delta\varepsilon\over \sqrt{a_0}}$ and
$d(I_{j,s},\theta_0)\geq d(\theta_j,\theta_0)-d(I_{j,s},\theta_j)\geq (1-{2\sqrt{a_1}\delta\over \sqrt{a_0}})\varepsilon$. Write
$$A_{j,k}^2=$$$$\int_{\mathfrak{X}^{2k-1}}\Bigg(\int_{\mathfrak{X}}\biggl( \int_{\mathfrak{X}}\big( {I_{j,2k-1}\over p_{\theta_0}(X_{2k}|X_{2k-1})}\big)^{2\alpha}\, d\mu(X_{2k})\biggr)\,p_{\theta_0}(X_{2k-1}|X_{2k-2})\,d\mu(X_{2k-1})\Bigg)$$
$$ \bigg({\int_{B_j}q_\theta(X_0)\Pi_n(d\theta)\over q_{\theta_0}(X_0)\Pi_n(B_j)}\prod\limits_{t=1}^{k-1}{I_{j,2t-1}\over p_{\theta_0}(X_{2t}|X_{2t-1})}\bigg)^{2\alpha}
$$$$q_{\theta_0}(X_0)\prod\limits_{s=0}^{2k-3}p_{\theta_0}(X_{s+1}|X_s)\,d\mu(X_0)d\mu(X_1)\dots d\mu(X_{2k-2}).$$
Take an nonnegative integer $m$ with ${2\alpha\over 1-2\alpha}\leq 2^m<{4\alpha\over 1-2\alpha}.$
Repeating the proof of Proposition \ref{prop:2} (applying the same procedure $m+1$ times instead of $m$ times) we get that
$$\int_{\mathfrak{X}}\biggl( \int_{\mathfrak{X}}\big( {I_{j,2k-1}\over p_{\theta_0}(X_{2k}|X_{2k-1})}\big)^{2\alpha}\, d\mu(X_{2k})\biggr)\,p_{\theta_0}(X_{2k-1}|X_{2k-2})\,d\mu(X_{2k-1})$$
$$\leq \int_{\mathfrak{X}}\biggl( 1-{1\over 2}\int_{\mathfrak{X}}\big( \sqrt{I_{j,2k-1}}-\sqrt{ p_{\theta_0}(X_{2k}|X_{2k-1})}\,\big)^2\, d\mu(X_{2k})\biggr)^{\alpha\over 2^{m-1}}$$$$p_{\theta_0}(X_{2k-1}|X_{2k-2})\,d\mu(X_{2k-1})\leq$$
$$ \biggl( 1-{1\over 2}\int_{\mathfrak{X}}\int_{\mathfrak{X}}\big( \sqrt{I_{j,2k-1}}-\sqrt{ p_{\theta_0}(X_{2k}|X_{2k-1})}\,\big)^2p_{\theta_0}(X_{2k-1}|X_{2k-2}) d\mu(X_{2k})d\mu(X_{2k-1})\biggr)^{\alpha\over 2^{m-1}}$$
$$\leq \biggl( 1-{a_0\over 2}\int_{\mathfrak{X}}\int_{\mathfrak{X}}\big( \sqrt{I_{j,2k-1}}-\sqrt{ p_{\theta_0}(X_{2k}|X_{2k-1})}\,\big)^2\, d\mu(X_{2k})d\nu(X_{2k-1})\biggr)^{{1\over 2}-\alpha}$$
$$= \Bigl( 1-{a_0 d(I_{j,2k-1},\theta_0)^2\over 2}\Bigr)^{{1\over 2}-\alpha}\leq e^{-(1-2\alpha)({\sqrt{a_0}\over 2}-\sqrt{a_1}\delta)^2 \varepsilon^2 }.$$
Hence we have
$$A_{j,k}^2\leq e^{-(1-2\alpha)({\sqrt{a_0}\over 2}-\sqrt{a_1}\delta)^2 \varepsilon^2 } \int_{\mathfrak{X}^{2k-1}}
 \bigg({\int_{B_j}q_\theta(X_0)\Pi_n(d\theta)\over q_{\theta_0}(X_0)\Pi_n(B_j)}\prod\limits_{t=1}^{k-1}{I_{j,2t-1}\over p_{\theta_0}(X_{2t}|X_{2t-1})}\bigg)^{2\alpha}
$$$$q_{\theta_0}(X_0)\prod\limits_{s=0}^{2k-3}p_{\theta_0}(X_{s+1}|X_s)\,d\mu(X_0)d\mu(X_1)\dots d\mu(X_{2k-2}).$$
Repeating the same argument $k-1$ times one can get that
$$A_{j,k}^2\leq e^{-(1-2\alpha)({\sqrt{a_0}\over 2}-\sqrt{a_1}\delta)^2 k\varepsilon^2 }\int_{\mathfrak{X}}
 \bigg({\int_{B_j}q_\theta(X_0)\Pi_n(d\theta)\over q_{\theta_0}(X_0)\Pi_n(B_j)}\bigg)^{2\alpha}q_{\theta_0}(X_0)\,d\mu(X_0)$$
$$\leq  e^{-(1-2\alpha)({\sqrt{a_0}\over 2}-\sqrt{a_1}\delta)^2 k\varepsilon^2 } \bigg(\int_{\mathfrak{X}}
{\int_{B_j}q_\theta(X_0)\Pi_n(d\theta)\over q_{\theta_0}(X_0)\Pi_n(B_j)}q_{\theta_0}(X_0)\,d\mu(X_0)\bigg)^{2\alpha}$$
$$= e^{-(1-2\alpha)({\sqrt{a_0}\over 2}-\sqrt{a_1}\delta)^2 k\varepsilon^2 }.$$
Similarly, we have
$$B_{j,k}^2\leq  e^{-(1-2\alpha)({\sqrt{a_0}\over 2}-\sqrt{a_1}\delta)^2 k\varepsilon^2 }.$$
Hence we have proved the required inequality and the proof of Proposition \ref{prop:4} is complete.
\end{proof}

The proof of Theorem \ref{thm:7} is completely similar to that of Theorem \ref{thm:1} except that we apply Proposition \ref{prop:4} and the following lemma.

\begin{lem}\label{lem:5} If there exists a constant $a_1\geq 1$ such that $\int_Ap_{\theta_0}(y|x)d\mu(y)\leq a_1 \int_A d\nu(y)$ for all $x\in \mathfrak{X}$ and $A\in{\cal A}$, then the inequality
$$P_{\theta_0}^{(n)}\biggl(\ \int_{\Theta} {q_\theta(X_0)\over q_{\theta_0}(X_0)}\prod\limits_{i=1}^n{p_\theta(X_i|X_{i-1})\over p_{\theta_0}(X_i|X_{i-1})}\, \Pi_n(d\theta)\leq e^{-n\,\varepsilon^2\,(3a_1+4c)}\ \Pi_n\bigl( W^1_n(\theta_0,\varepsilon) \bigr)\, \biggr)$$$$\leq  e^{-n\,\varepsilon^2\, c} $$
holds for all $n$, $\varepsilon>0$ and $c>0$.
\end{lem}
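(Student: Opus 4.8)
The plan is to mimic the proof of Lemma~\ref{lem:2}, the one new feature being that the likelihood ratio $R_\theta^{(n)}(X^{(n)})=\frac{q_\theta(X_0)}{q_{\theta_0}(X_0)}\prod_{i=1}^n\frac{p_\theta(X_i|X_{i-1})}{p_{\theta_0}(X_i|X_{i-1})}$ is now a product along a Markov path, so the expectation of its inverse square root must be evaluated by integrating out the coordinates one at a time --- precisely the iteration already carried out in the proof of Proposition~\ref{prop:4}. First, assuming $\Pi_n\bigl(W^1_n(\theta_0,\varepsilon)\bigr)>0$, I would run the Jensen--Chebyshev step of Lemma~\ref{lem:2} verbatim: applying Jensen's inequality for the convex function $x\mapsto x^{-1/2}$ to the integral restricted to $W^1_n(\theta_0,\varepsilon)$ and then Chebyshev's inequality bounds the probability in question by
$$e^{-\frac12(3a_1+4c)n\varepsilon^2}\ \frac{1}{\Pi_n\bigl(W^1_n(\theta_0,\varepsilon)\bigr)}\int_{W^1_n(\theta_0,\varepsilon)}P_{\theta_0}^{(n)}R_\theta^{(n)}(X^{(n)})^{-1/2}\,\Pi_n(d\theta),$$
so that the whole claim reduces to the pointwise estimate $P_{\theta_0}^{(n)}R_\theta^{(n)}(X^{(n)})^{-1/2}\le e^{\frac32 a_1 n\varepsilon^2}$ for $\theta\in W^1_n(\theta_0,\varepsilon)$, which drives the right-hand side below $e^{-2cn\varepsilon^2}\le e^{-cn\varepsilon^2}$.

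For this estimate, note that $P_{\theta_0}^{(n)}R_\theta^{(n)-1/2}$ equals
$$\int q_{\theta_0}(x_0)^{3/2}q_\theta(x_0)^{-1/2}\prod_{i=1}^n p_{\theta_0}(x_i|x_{i-1})^{3/2}p_\theta(x_i|x_{i-1})^{-1/2}\,dx_0\cdots dx_n,$$
which I would evaluate by integrating $x_n,x_{n-1},\dots,x_1,x_0$ in turn. The elementary building block, obtained exactly as in the final computation of the proof of Lemma~\ref{lem:2} but conditionally, is $\int p_{\theta_0}(y|x)^{3/2}p_\theta(y|x)^{-1/2}\,d\mu(y)=1+\tfrac32 h_\theta(x)$ with $h_\theta(x)=\int\bigl(\sqrt{p_{\theta_0}(y|x)}-\sqrt{p_\theta(y|x)}\bigr)^2\bigl(\tfrac23\sqrt{p_{\theta_0}(y|x)/p_\theta(y|x)}+\tfrac13\bigr)\,d\mu(y)$, so that $\int h_\theta\,d\nu=H_*(p_{\theta_0},p_\theta)^2$ and $1+\tfrac32 h_\theta(x)\le e^{\frac32 h_\theta(x)}$. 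Integrating out $x_n$ yields the factor $1+\tfrac32 h_\theta(X_{n-1})$ in front of the remaining product; at each of the later steps the accumulated weight is integrated against a kernel of the form $p_{\theta_0}(y|\,\cdot\,)\,d\mu(y)$, and the hypothesis $\int_A p_{\theta_0}(y|x)\,d\mu(y)\le a_1\int_A d\nu(y)$ controls that change of measure so that the $n$ transitions together contribute at most $e^{\frac32 a_1 n H_*(p_{\theta_0},p_\theta)^2}$, exactly as the iteration in the proof of Proposition~\ref{prop:4}; the concluding integration over $x_0$ against $q_{\theta_0}$ adds the factor $\int q_{\theta_0}(x)^{3/2}q_\theta(x)^{-1/2}d\mu(x)=1+\tfrac32 H_*(q_{\theta_0},q_\theta)^2\le e^{\frac32 H_*(q_{\theta_0},q_\theta)^2}$. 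This gives $P_{\theta_0}^{(n)}R_\theta^{(n)-1/2}\le e^{\frac32 a_1 n H_*(p_{\theta_0},p_\theta)^2+\frac32 H_*(q_{\theta_0},q_\theta)^2}$, and since $a_1\ge 1$ the exponent is at most $\tfrac32 a_1\bigl(n H_*(p_{\theta_0},p_\theta)^2+H_*(q_{\theta_0},q_\theta)^2\bigr)\le \tfrac32 a_1 n\varepsilon^2$ on $W^1_n(\theta_0,\varepsilon)$, as required.

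The reduction of the first paragraph is routine; I expect the real work --- and the main obstacle --- to be the iteration. After the first conditioning the accumulated factor $1+\tfrac32 h_\theta(X_{n-1})$ is random and $h_\theta$ is only $\nu$-integrable, not bounded, so it cannot simply be taken out of the expectation; one has to show that integrating out the current bottom coordinate, a step in which the accumulated weight is entangled with the factor $\sqrt{p_{\theta_0}(X_j|X_{j-1})/p_\theta(X_j|X_{j-1})}$, really does collapse that weight to a fixed multiple of $H_*(p_{\theta_0},p_\theta)^2$. The tools I would use are the pointwise decomposition
$$\frac{p_{\theta_0}(y|x)^{3/2}}{p_\theta(y|x)^{1/2}}=\tfrac32\bigl(\sqrt{p_{\theta_0}(y|x)}-\sqrt{p_\theta(y|x)}\bigr)^2\Bigl(\tfrac23\sqrt{\tfrac{p_{\theta_0}(y|x)}{p_\theta(y|x)}}+\tfrac13\Bigr)+\tfrac32 p_{\theta_0}(y|x)-\tfrac12 p_\theta(y|x),$$
which isolates the part of the kernel dominated by $a_1\nu$, together with the $a_1$-domination hypothesis and, where convenient, the device of first splitting the product over odd- and even-indexed transitions and invoking H\"older's inequality, exactly as in the proof of Proposition~\ref{prop:4}.
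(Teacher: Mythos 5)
Your first paragraph sets up the Jensen--Chebyshev reduction using the power $-\tfrac12$, so you are left to show $P_{\theta_0}^{(n)}R_\theta^{(n)}(X^{(n)})^{-1/2}\le e^{\tfrac32 a_1 n\varepsilon^2}$ on $W^1_n(\theta_0,\varepsilon)$. That is where the argument breaks. Your second paragraph describes the iteration as if, at each step, the accumulated weight were integrated against a genuine probability kernel $p_{\theta_0}(y\,|\,\cdot)\,d\mu(y)$; that is not so. After integrating out $x_n$ one is left with $1+\tfrac32 h_\theta(x_{n-1})$ multiplied by the \emph{tilted} kernel $p_{\theta_0}(x_{n-1}|x_{n-2})^{3/2}p_\theta(x_{n-1}|x_{n-2})^{-1/2}$, and the hypothesis $\int_A p_{\theta_0}(y|x)\,d\mu(y)\le a_1\int_A d\nu(y)$ gives no control whatsoever over this tilted kernel (it is not even guaranteed to be finite). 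You correctly identify this entanglement as the main obstacle in your third paragraph, but the remedy you then propose --- the odd/even H\"older split --- is \emph{incompatible} with the power $-\tfrac12$ you chose at the start.

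The point is quantitative. Write $R_\theta^{(n)}=AB$ with $A$ collecting the initial factor together with the even transitions and $B$ the odd transitions. To get, after applying H\"older to $P_{\theta_0}^{(n)}(AB)^{\gamma}=P_{\theta_0}^{(n)}(A^{\gamma}B^{\gamma})$, factors of the form $\big(P_{\theta_0}^{(n)}A^{1/2}\big)^{1/2}\big(P_{\theta_0}^{(n)}B^{1/2}\big)^{1/2}$ --- the only form in which each block integrates cleanly, since $A^{1/2}$ against $p_{\theta_0}^{(n)}$ produces exactly the ratios $p_{\theta_0}^{3/2}/p_\theta^{1/2}$ on the even transitions and plain $p_{\theta_0}$ on the odd ones --- one must take $\gamma=\tfrac14$. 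With $\gamma=\tfrac12$ the split yields $\big(P_{\theta_0}^{(n)}A\big)^{1/2}\big(P_{\theta_0}^{(n)}B\big)^{1/2}$, and $P_{\theta_0}^{(n)}A$ contains $\chi^2$-type integrals $\int p_{\theta_0}^{2}/p_\theta\,d\mu$ that the hypotheses do not control. This is why the paper's proof starts by raising the likelihood ratio to the power $\tfrac14$, so that the target becomes $P_{\theta_0}^{(n)}R_\theta^{(n)}(X^{(n)})^{1/4}\le e^{\tfrac{3a_1}{4}n\varepsilon^2}$; the corresponding Jensen--Chebyshev step then produces $e^{-n\varepsilon^2(3a_1/4+c)}$, so that the exponent $3a_1+4c$ in the lemma's hypothesis is exactly what is needed. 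You should redo the first paragraph with $x\mapsto x^{-1/4}$, and then your iteration in the second paragraph becomes the paper's: in each of $A_k,B_k$ the factor being integrated out alternates between $p_{\theta_0}^{3/2}/p_\theta^{1/2}$ (giving $1+\tfrac32 h_\theta$) and the honest density $p_{\theta_0}$ (which, together with the $a_1$-domination, collapses $h_\theta$ to $a_1 H_*(p_{\theta_0},p_\theta)^2$), and the initial factor contributes $\big(1+\tfrac32 H_*(q_{\theta_0},q_\theta)^2\big)^{1/2}$.
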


\begin{proof}[Proof of Lemma \ref{lem:5}.] Similar to the proof of Lemma \ref{lem:2} we have that the left hand side of the required inequality does not exceed
$${\int_{W^1_n(\theta_0,\varepsilon)}  P_{\theta_0}^{(n)}\Big({q_{\theta_0}(X_0)\over q_\theta(X_0)}\prod\limits_{i=1}^n{p_{\theta_0}(X_i|X_{i-1})\over p_\theta(X_i|X_{i-1})}\Big)^{1\over 4}\, \Pi_n(d\theta)\over e^{n\,\varepsilon^2\,({3a_1 \over 4}+c)} \Pi_n\bigl(W^1_n(\theta_0,\varepsilon)\bigr)}.$$
So it suffices to prove that $P_{\theta_0}^{(n)}\Big({q_{\theta_0}(X_0)\over q_\theta(X_0)}\prod\limits_{i=1}^n{p_{\theta_0}(X_i|X_{i-1})\over p_\theta(X_i|X_{i-1})}\Big)^{1\over 4}\leq e^{{3a_1\over 4 }n\varepsilon^2} $
for all $\theta\in W^1_n(\theta_0,\varepsilon)$. We assume without loss of generality that $n$ is an even number, say $n=2k$. Write
$${q_{\theta_0}(X_0)\over q_\theta(X_0)}\prod\limits_{i=1}^n{p_{\theta_0}(X_i|X_{i-1})\over p_\theta(X_i|X_{i-1})}={q_{\theta_0}(X_0)\over q_\theta(X_0)}\prod\limits_{j=1}^k{p_{\theta_0}(X_{2j}|X_{2j-1})\over p_\theta(X_{2j}|X_{2j-1})}  \prod\limits_{j=1}^k{p_{\theta_0}(X_{2j-1}|X_{2j-2})\over p_\theta(X_{2j-1}|X_{2j-2})}.$$
From H\"older's inequality it then turns out that
$$P_{\theta_0}^{(n)}\Big({q_{\theta_0}(X_0)\over q_\theta(X_0)}\prod\limits_{i=1}^n{p_{\theta_0}(X_i|X_{i-1})\over p_\theta(X_i|X_{i-1})}\Big)^{1\over 4}$$
$$\leq \biggl(P_{\theta_0}^{(n)}\Big({q_{\theta_0}(X_0)\over q_\theta(X_0)}\prod\limits_{j=1}^k{p_{\theta_0}(X_{2j}|X_{2j-1})\over p_\theta(X_{2j}|X_{2j-1})}\Big)^{1\over 2}\biggr)^{1\over 2}\,\biggl(P_{\theta_0}^{(n)}\Big(\prod\limits_{j=1}^k{p_{\theta_0}(X_{2j-1}|X_{2j-2})\over p_\theta(X_{2j-1}|X_{2j-2})}\Big)^{1\over 2}\biggr)^{1\over 2}$$
$$:=A_kB_k.$$
Hence by Fubini's theorem we get that $A_k^2 $ is equal to
$$\int_{\mathfrak{X}^{2k+1}} {q_{\theta_0}(X_0)^{3\over 2}\over q_\theta(X_0)^{1\over 2}}\prod\limits_{j=1}^k\bigg({p_{\theta_0}(X_{2j}|X_{2j-1})^{3\over 2}\over p_\theta(X_{2j}|X_{2j-1})^{1\over 2}}\,p_{\theta_0}(X_{2j-1}|X_{2j-2})\bigg)$$$$d\mu(X_0)d\mu(X_1)\dots d\mu(X_{2k})$$
$$=\int_{\mathfrak{X}^{2k-1}}\Bigg(\int_{\mathfrak{X}}\biggl( \int_{\mathfrak{X}} {p_{\theta_0}(X_{2k}|X_{2k-1})^{3\over 2}\over p_\theta(X_{2k}|X_{2k-1})^{1\over 2}}\, d\mu(X_{2k})\biggr)\,p_{\theta_0}(X_{2k-1}|X_{2k-2})\,d\mu(X_{2k-1})\Bigg)$$
$$ {q_{\theta_0}(X_0)^{3\over 2}\over q_\theta(X_0)^{1\over 2}}\prod\limits_{j=1}^{k-1}{p_{\theta_0}(X_{2j}|X_{2j-1})^{3\over 2}\over p_\theta(X_{2j}|X_{2j-1})^{1\over 2}}\,p_{\theta_0}(X_{2j-1}|X_{2j-2})\,d\mu(X_0)d\mu(X_1)\dots d\mu(X_{2k-2}),$$
where by the proof of Lemma 1 in Xing \cite{xi1} we have
$$\int_{\mathfrak{X}}\biggl( \int_{\mathfrak{X}} {p_{\theta_0}(X_{2k}|X_{2k-1})^{3\over 2}\over p_\theta(X_{2k}|X_{2k-1})^{1\over 2}}\, d\mu(X_{2k})\biggr)\,p_{\theta_0}(X_{2k-1}|X_{2k-2})\,d\mu(X_{2k-1})$$
$$=\int_{\mathfrak{X}}\Bigl(1+{3\over 2}H_* \big(p_{\theta_0}(\cdot|X_{2k-1}),p_{\theta}(\cdot|X_{2k-1})\big)^2\Bigr)\,p_{\theta_0}(X_{2k-1}|X_{2k-2})\,d\mu(X_{2k-1})$$
$$= 1+\int_{\mathfrak{X}}{3\over 2}H_* \big(p_{\theta_0}(\cdot|X_{2k-1}),p_{\theta}(\cdot|X_{2k-1})\big)^2\,p_{\theta_0}(X_{2k-1}|X_{2k-2})\,d\mu(X_{2k-1})$$
$$\leq 1+\int_{\mathfrak{X}}{3a_1\over 2}H_* \big(p_{\theta_0}(\cdot|X_{2k-1}),p_{\theta}(\cdot|X_{2k-1})\big)^2\,d\nu(X_{2k-1})$$
$$= 1+{3a_1\over 2}H_*(p_{\theta_0},p_{\theta})^2\leq e^{{3a_1\over 2}H_*(p_{\theta_0},p_{\theta})^2}.$$
Thus, we have obtained that $A_k\leq e^{{3a_1\over 4}H_*(p_{\theta_0},p_{\theta})^2} A_{k-1}$. Repeating the same argument $k-1$ times and using $a_1\geq 1$ one can get
$$A_k\leq e^{{3a_1\over 4}kH_*(p_{\theta_0},p_{\theta})^2}\Big( \int_{\mathfrak{X}}  {q_{\theta_0}(X_0)^{3\over 2}\over q_\theta(X_0)^{1\over 2}}\,d\mu(X_0)\Big)^{1\over 2}$$$$=e^{{3a_1\over 4}kH_*(p_{\theta_0},p_{\theta})^2}\bigl(1+{3\over 2}H_*(q_{\theta_0},q_{\theta})^2\bigr)^{1\over 2}\leq e^{{3\over 4}H_*(q_{\theta_0},q_{\theta})^2+{3a_1\over 4}k H_*(p_{\theta_0},p_{\theta})^2}.$$
Similarly, we can get that $B_k\leq e^{{3a_1\over 4}k H_*(p_{\theta_0},p_{\theta})^2}$. Therefore $A_kB_k\leq e^{{3\over 4}H_*(q_{\theta_0},q_{\theta})^2+{3a_1\over 4}n H_*(p_{\theta_0},p_{\theta})^2}\leq e^{{3a_1\over 4}n\varepsilon^2} $
for all $\theta\in W^1_n(\theta_0,\varepsilon)$, and the proof of Lemma \ref{lem:5} is complete.

\end{proof}

\end{document}